%% filename: amsbook-template.tex
%% version: 1.1
%% date: 2014/07/24
%%
%% American Mathematical Society
%% Technical Support
%% Publications Technical Group
%% 201 Charles Street
%% Providence, RI 02904
%% USA
%% tel: (401) 455-4080
%%      (800) 321-4267 (USA and Canada only)
%% fax: (401) 331-3842
%% email: tech-support@ams.org
%%
%% Copyright 2006, 2008-2010, 2014 American Mathematical Society.
%%
%% This work may be distributed and/or modified under the
%% conditions of the LaTeX Project Public License, either version 1.3c
%% of this license or (at your option) any later version.
%% The latest version of this license is in
%%   http://www.latex-project.org/lppl.txt
%% and version 1.3c or later is part of all distributions of LaTeX
%% version 2005/12/01 or later.
%%
%% This work has the LPPL maintenance status `maintained'.
%%
%% The Current Maintainer of this work is the American Mathematical
%% Society.
%%
%% ====================================================================

%    AMS-LaTeX v.2 driver file template for use with amsbook
%
%    Remove any commented or uncommented macros you do not use.

\documentclass[13pt]{amsbook}

%    For use when working on individual chapters
%\includeonly{}

%    Include referenced packages here.
\usepackage{amsfonts,amsmath,amscd,amssymb}
\usepackage{mathrsfs,color,pst-node}
\usepackage{latexsym,graphicx,verbatim,nameref,tikz-cd}
\usepackage[numbers,sort&compress]{natbib}
\usepackage[all,cmtip]{xy}
\usepackage{hyperref}
\usepackage{fancyhdr}
\usepackage{tikz}
\usepackage[toc,page]{appendix}
\usepackage{epigraph} 

\usetikzlibrary{matrix}
\makeindex

\newtheorem{theorem}{Theorem}[section]
\newtheorem{lemma}{Lemma}[section]
\newtheorem{claim}{Claim}[theorem]
\newtheorem{corollary}{Corollary}[section]
\newtheorem{proposition}{Proposition}[section]
\newtheorem{exercise}{Exercise}[chapter]

\theoremstyle{definition}
\newtheorem{definition}{Definition}[section]
\newtheorem{example}{Examples}[section]
\newtheorem{remarks}{Remarks}[section]
\newtheorem{remark}{Remark}[section]

\newcommand{\ord}{{\rm Ord}}
\newcommand{\lcm}{{\rm lcm}}
\newcommand{\tf}{{\rm F}}
\newcommand{\txau}{{\rm Aut}}

\newcommand{\txch}{{\rm Ch}}
\newcommand{\ox}{\mathcal{O}_x}

\newcommand{\oa}{\mathcal{O}_a}
\newcommand{\mbn}{\mathbb{N}}
\newcommand{\mbz}{\mathbb{Z}}
\newcommand{\mbq}{\mathbb{Q}}
\newcommand{\mbr}{\mathbb{R}}
\newcommand{\mbc}{\mathbb{C}}
\newcommand{\mbf}{\mathbb{F}}
\newcommand{\mbe}{\mathbb{E}}
\newcommand{\mbk}{\mathbb{K}}
\newcommand{\pf}{\mbf[x]}

\newcommand{\pq}{\mbq[x]}

\newcommand{\pc}{\mbc[x]}
\newcommand{\pz}{\mbz[x]}

\numberwithin{section}{chapter}
\numberwithin{equation}{chapter}
%\numberwithin{exercise}{chapter}
\tikzset{node distance=1.8cm, auto}

%    For a single index; for multiple indexes, see the manual
%    "Instructions for preparation of papers and monographs:
%    AMS-LaTeX" (instr-l.pdf in the AMS-LaTeX distribution).
\begin{document}
\frontmatter
\title{A Concise Course in Galois Theory}

%    Remove any unused author tags.

%    author one information
\author{Huichi Huang}
\address{Mathematical Department, College of mathematics and statistics, Chongqing University \\
Chongqing, China, 401331}
%\curraddr{}
\email{huanghuichi@cqu.edu.cn}
%\thanks{}

%    author two information
%\author{}
%\address{}
%\curraddr{}
%\email{}
%\thanks{}

%\subjclass[2010]{Primary }

%\keywords{Field extension, Galois Group}

\date{\today}

%\begin{abstract}

%\end{abstract}

\maketitle

%    Dedication.  If the dedication is longer than a line or two,
%    remove the centering instructions and the line break.
%\cleardoublepage
%\thispagestyle{empty}
%\vspace*{13.5pc}
%\begin{center}
%  Dedication text (use \\[2pt] for line break if necessary)
%\end{center}
%\cleardoublepage

%    Change page number to 6 if a dedication is present.
\setcounter{page}{4}

\epigraph{Algebra is the offer made by the devil to the mathematician. The devil says:``I will give you this powerful machine, it will answer any question you like. All you need to do is give me your soul: give up geometry and you will have this marvellous machine."}
{\textit{Michael Atiyah}}

\tableofcontents

%    Include unnumbered chapters (preface, acknowledgments, etc.) here.
\include{Preface}

\mainmatter
%    Include main chapters here.
%\begin{comment}
\chapter*{Preface}

%Starting from 2017 fall, I taught several undergraduate and graduate  algebra  courses in Chongqing university. This little book grows from the experience. It aims at introducing Galois theory with minimal background on basic theory of groups, rings and fields. Many concepts of modern algebra  were  invented to solve the problem of finding solutions of polynomials by radicals, so it might be suitable to present the motivation of abstract algebra via a concise course on Galois theory.

%There are two central theorems in the book: the fundamental theorem of Galois theory and Galois' great theorem. We lay down all preliminaries for proving the big two.
%I include in the appendix a proof of transcendency of $\pi$ for completeness of the book.

Algebra, in its essence, is a language of symmetry and structure. From the ancient quest to solve polynomial equations to the modern exploration of abstract algebraic systems, the journey of algebra has been marked by profound insights and elegant theories. Among these, Galois theory stands as a crowning achievement, weaving together group theory, field theory, and the geometry of equations into a unified framework. This book aims to guide readers through this beautiful landscape.

Born from years of teaching undergraduate and graduate algebra courses at Chongqing University, this text is designed to introduce Galois theory while minimizing prerequisites. It seeks to reconnect the abstract machinery of modern algeba: groups, rings, and fields with the historical problem that inspired its creation: determining when a polynomial can be solved by radicals. By anchoring abstract concepts in concrete motivation, we hope to illuminate both the ``how" and the ``why" of algebraic structures.

The book is divided into four chapters,  each building systematically toward the heart of Galois theory:  

1. Basic Group Theory: From Lagrange’s pioneering work to Sylow’s theorems, we lay the group-theoretic foundation, emphasizing symmetric groups, solvability, and the simplicity of alternating groups.  

2. Basic Ring and Field Theory: Polynomial rings, ideals, and field extensions are developed with an eye toward their role in splitting fields and separability.  

3. Galois Theory: The fundamental theorem and its implications are rigorously proved, linking field extensions to group actions and paving the way for applications.  

4. Applications: From solving polynomials by radicals to classical ruler-and-compass problems, we demonstrate how Galois theory resolves age-old questions with modern tools.  

The book has the following  features:
  
Minimalist Foundation: Only essential concepts from group, ring, and field theory are introduced, avoiding unnecessary abstraction while maintaining rigor.  

Historical Motivation: The narrative frequently returns to the problem of solvability by radicals, grounding abstract ideas in their historical context.  

Computational Insights: Explicit calculations of Galois groups and splitting fields bridge theory and practice. 
 
Exercises and Examples: Over 150 exercises ranging from foundational proofs to exploratory problems—complement detailed examples, encouraging active learning.

This book is tailored for advanced undergraduates and beginning graduate students in mathematics. Familiarity with linear algebra and basic set theory is assumed, but no prior exposure to abstract algebra is required. Instructors will find it suitable for a one-semester course, with flexibility to emphasize theoretical depth or computational techniques.

                                                                                                                                                                                                                                                 Huichi Huang

                                                                                                                                                                                                                                           Chongqing University
  
                                                                                                                                                                                                                                                  Spring 2025

\chapter*{Notations}
Throughout the book, we use the following notations.

$\mbz$: the set of integers;

$\mbn$: the set of nonnegative integers;

$\mbz^+$: the set of positive integers;

$\mbq$: the set of rational numbers;

$\mbq^+$: the set of positive rational numbers;

$\mbr$: the set of real numbers;

$\mbc$: the set of complex numbers;

$|A|$: the cardinality of a finite set $A$;

$\emptyset$: the empty set.

$\mbf$: a field.

\chapter{Basic Group Theory}

\section{Definitions and examples}

The formal definition of group was  given by  Cayley during the third decade of the 19th century, though  concrete examples already appeared  one century ago in the work of Lagrange.~\cite{Kleiner1986} Nowadays groups are  everywhere in mathematics and physics.

\begin{definition}
\index{group}
  A \textbf{group} is a nonempty set $G$ with a binary operation $``\cdot"$ such that
  \begin{enumerate}
    \item $a\cdot b$ is in $G$ for every $a$,$b$ in $G$;
    \item $(a\cdot b)\cdot c=a\cdot(b\cdot c)$ for all $a,b,c$ in $G$;
    \item there exists an element $e$ in $G$ such that $e\cdot a=a\cdot e=a$ for every $a$ in $G$;
    \item for every $a$ in $G$, there exists $b$ in $G$ with $a\cdot b=b\cdot a=e$.
  \end{enumerate}
  For brevity, we just write $a\cdot b$ as $ab$ when the binary operation is clear. If $ab=ba$ for all $a,b$ in $G$, then $G$ is called an \textbf{abelian group}.~\index{abelian group}
\end{definition}

\begin{example}\

  \begin{enumerate}
    \item The set of integers $\mbz$ under addition is an abelian group;
    \item The set of nonzero real numbers $\mbr^*=\mbr\setminus\{0\}$ under multiplication is an abelian group;
    \item The set $\text{GL}_n(\mbr)$ of invertible $n\times n$ real matrices under matrix multiplication is a group;
    \item Let $X$ be a nonempty set. The set of bijections on $X$ is a group under composition of maps. Denote the group by $S_X$ and call it \textbf{the symmetric group} of $X$~\index{symmetric group}.
    \item For $n\geq 1$, let $\mbz/n\mbz$ be the set $\{\bar{0},\bar{1},\cdots,\overline{n-1}\}$ and define the addition $\bar{i}$ and $\bar{j}$ in $\mbz/n\mbz$ to be $\bar{k}$ where $k$ in $[0,n)$ is the reminder of $i+j$ modulo $n$. Then $\mbz/n\mbz$ is a group under the above-defined addition, which is still denoted by $+$. Another common notation for $\mbz/n\mbz$ is $\mbz_n$.
    \item For  $n\geq 1$, let $(\mbz/n\mbz)^\times$ be set of positive integers not exceeding $n$ and prime to $n$. Define the product of $i$ and $j$ in $(\mbz/n\mbz)^\times$ be to the remainder of $i\times j$ modulo $n$. Then $(\mbz/n\mbz)^\times$ under the above-defined multiplication is an abelian group.
  \end{enumerate}
\end{example}

\begin{remarks}
\begin{enumerate}
  \item A group  $G$ is not only determined by the set, but also  by the binary operation. It might happen that the same set is a group under one binary operation, but not a group under another operation. For example, $\mbz$ is a group under addition, but not a group under multiplication.
  \item There is a unique identity in a group.
  \item Given an element $a$ in a group $G$, there is a unique $b$ in $G$ such that $ab=ba=e$ and $b$ is called the \textbf{inverse} of $a$ denoted by $a^{-1}$.~\index{inverse}
\end{enumerate}

\end{remarks}

A group $G$ is called \textbf{cyclic} if there exits $a$ in $G$ such that each $b$ in $G$ can be expressed as $b=a^n$ for some $n$ in $\mbz$~\index{cyclic group}, and $a$ is called a generator of $G$. In this case, write $G$ as $\langle a\rangle$. For example $\mbz=\langle 1 \rangle$ and $\mbz/n\mbz=\langle \bar{1} \rangle$.

A subset $A$ of  a group $G$ is called a \textbf{generating set} of $G$ if every element $a$ of $G$ can be expressed as
$$a=a_1^{n_1}a_2^{n_2}\cdots a_m^{n_m}$$ for $a_i$'s in $A$ and  $n_i$'s in $\mbz$. We also say that $G$ is generated by $A$. A cyclic group is a group generated by a single element.~\index{generating set} The smallest subgroup of $G$ containing some elements is called the subgroup generated by these elements.

\begin{remark}
  Generating sets of a group may not be unique, sometimes even infinite. For example, $\mbz^2=\langle (1,0), (0,1)\rangle=\langle (1,2), (2,3)\rangle$. In fact, rows of any element in $GL_2(\mbz)$ gives a generating set of $\mbz^2$.
\end{remark}

A subset $N$ of a group $G$ is called a \textbf{subgroup} of $G$ if $N$ is a group under the same binary operation as $G$.~\index{subgroup} Denote it by $N\leq G$. Denote the subgroup $\{e\}$ of $G$ by $1$.

Every group $G$ has two trivial subgroups $1$ and $G$. The set of even integers $2\mbz$ is a subgroup of $\mbz$, and $\mbz$ is a subgroup of $\mbr$.

For a subgroup $N$ of a group $G$ and $x$ in $G$, the set $xN=\{xy|y\in N\}$~($Nx=\{yx|y\in N\}$ is called a left~(right) \textbf{coset} of $N$ in $G$~\index{coset}. For example, the set of odd integers $1+2\mbz$ is both a left coset and a right coset of  $2\mbz$  in  $\mbz$ since $\mbz$ is abelian.  One can check that any two cosets either coincide or are disjoint.

\begin{theorem}[Lagrange's theorem]\

\label{thm: index}
If $H$ is a subgroup of a finite group $G$, then $|H|$ divides $|G|$.
\end{theorem}

To prove Lagrange's theorem, we need a lemma.
\begin{lemma}
\label{lem:coset}
Suppose that $H$ is a subgroup of $G$. Any two left cosets $xH$ and $yH$ either coincide or are disjoint.
\end{lemma}
\begin{proof}
Suppose $z\in xH\cap yH$. Then there exist $h_1,h_2\in H$ such that $z=xh_1=yh_2$. Hence for any $h\in H$, we have that $xh=(xh_1)(h_1^{-1}h)=(yh_2)(h_1^{-1}h)=y(h_2h_1^{-1}h)$ which gives that $xH\subseteqq yH$. Similarly $yH\subseteqq xH$. Therefore $xH=yH$.
\end{proof}

\begin{proof}~[Proof of Theorem~\ref{thm: index}\

Note that $G=\bigcup_{x\in G} xH$ and $|xH|=|H|$ for every $x$ in $G$. By Lemma~\ref{lem:coset}, $|G|$ is a multiple of $|H|$.
\end{proof}

Denote by $[G:H]$ the number of left cosets of $H$ in $G$ and call it the \textbf{index} of $H$ in $G$.~\index{index} From Theorem~\ref{thm: index}, one has $$[G:H]=\dfrac{|G|}{|H|}.$$

Every element $a$ in a group $G$ can generate a subgroup $\langle a \rangle=\{a^n\,|\, n\in\mbz\}$ of $G$. The cardinality of $\langle a\rangle$ is called the \textbf{order} of $a$. Denote it by $\ord_a$.~\index{order} The order of $a$ is the smallest positive integer $n$ such that $a^n=e$. If such $n$ does not exist for $a$, then we say that $a$ has infinite order.

By Lagrange's theorem, the order of every element of a finite group $G$ is a factor of $|G|$.

\section{The symmetric group $S_n$}

When $X=\{1,2, \cdots, n\}$, the group $S_X$ is called \textbf{the symmetric group of degree n}, denoted by $S_n$.~\index{symmetric group}

In $S_n$, the element $\sigma$ is called an \textbf{m-cycle} if $\sigma$ permutes $\{i_1,i_2,\cdots,i_m\}\subseteq \{1,2,\cdots, n\}$ as follows: $\sigma(i_k)=i_{k+1}$ for $1\leq k\leq m-1$ and $\sigma(i_m)=i_1$ and $\sigma(j)=j$ for $j\notin \{i_1,i_2,\cdots,i_m\}$. Denote it by $(i_1\, i_2\,\cdots\, i_m)$.~\index{cycle} A 2-cycle is called a \textbf{transposition}.~\index{transposition}

For example, in $S_5$, the 3-cycle $\sigma=(1\, 2\, 3)$ satisfies that $\sigma(1)=2,\sigma(2)=3,\sigma(3)=1$ and $\sigma(4)=4, \sigma(5)=5$.

A product of cycles reads from right to left as compositions of maps. For example with this rule, $(1\,2)(2\,3\,4)=(1\,2\,3\,4)$.

\begin{lemma}
  The order of an m-cycle is m.
\end{lemma}
\begin{proof}
Consider the m-cycle $\sigma=(i_1\, i_2\,\cdots\, i_m)$. One can see that $\sigma(i_1)=i_2, \sigma^2(i_1)=i_3,\cdots, \sigma^{m-1}(i_1)=i_m,\sigma^m(i_1)=i_1$. Similarly $\sigma^m(i_k)=i_k$ for all $1\leq k\leq m$. Also $\sigma^m(j)=j$ for all $j$ not in $\{i_1,i_2,\cdots,i_m\}$. So $\sigma^m=e$. Moreover for any $k<m$, $\sigma^k\neq e$ since $\sigma^k(i_1)=i_{k+1}\neq i_1$. Hence $\ord_\sigma=m$.
\end{proof}

Two cycles $(i_1\, i_2\,\cdots\, i_m)$ and $(j_1\, j_2\,\cdots\, j_k)$ are called  disjoint if $\{i_1,i_2,\cdots,i_m\}$ and $\{j_1,j_2,\cdots,j_k\}$ are disjoint sets.

\begin{proposition}
Every element of $S_n$ is a product of disjoint cycles.
\end{proposition}
\begin{proof}
Suppose $\sigma\neq e$. Take $i_1$ in $\{1,2,\cdots, n\}$ such that $\sigma(i_1)=i_2\neq i_1$. There exists an integer $m\geq 2$ such that $\sigma^m(i_1)=i_1$ since the order of $\sigma$ is finite. Assume that $m$ is the smallest positive integer such that $\sigma^m(i_1)=i_1$. We get an m-cycle $(i_1\,i_2\,\cdots\,i_m)$ where $i_k=\sigma^{k-1}(i_1)$ for $2\leq k\leq m$.
Repeat this process for $\{1,2,\cdots, n\}\setminus\{i_1,i_2,\cdots,i_m\}$. One may get another cycle. Then $\sigma$ is the product of all cycles gotten from the above process, and these cycles are disjoint.
\end{proof}

\begin{lemma}~\label{lem:join}
 For distinct $i_1,i_2,\cdots,i_m,k,j_1,j_2,\cdots,j_l$, 
 $$(i_1\, i_2\, \cdots\,i_m\,k)(k\,j_1\,j_2\,\cdots\,j_l)=(i_1\, i_2\, \cdots\,i_m\,k\,j_1\,j_2\,\cdots\,j_l).$$
\end{lemma}
Verification of this lemma is straightforward.
\begin{theorem}
Every element of $S_n$ is a product of transpositions.
\end{theorem}
\begin{proof}
It's enough to prove that every m-cycle is a product of transpositions. By Lemma~\ref{lem:join}, we have

\begin{align*}
  (i_1\,i_2\,\cdots\,i_m)&=(i_1\,i_2)(i_2\,i_3\,\cdots\,i_m)=(i_1\,i_2)(i_2\,i_3)(i_3\,\cdots\,i_m)  \\
  &=(i_1\,i_2)(i_2\,i_3)\cdots(i_{m-1}\,i_m).
\end{align*}
\end{proof}

An expression of an element $\sigma$ of $S_n$  as a product of transpositions is called a decomposition of $\sigma$ to transpositions.
An element of $S_n$ may have different decomposition to transpositions. For example in $S_4$, $(1\,2\,3)=(1\,2)(2\,3)=(1\,2\,3\,4)(3\,4)=(4\,1\,2\,3)(3\,4)=(4\,1)(1\,2)(2\,3)(3\,4)$. However, the parity of all decompositions is the same. That's, if a decomposition is  a product of even(odd) numbers of transpositions, then  so is any other decomposition.

We prove this in the rest of the section.

Suppose  $G$ and $H$ are groups. A group \textbf{homomorphism} $f:G\to H$ is a map such that $f(xy)=f(x)f(y)$ for all $x,y\in G$.~\index{homomorphism of groups}A bijective group homomorphism is called an \textbf{isomorphism}. 

Denote the standard basis of $\mathbb{R}^n=\mathbb{R}^{n\times1}$ by $\{e_i\}_{i=1}^n$. For $\sigma$ in $S_n$, the $n\times n$ permutation matrix $E_\sigma$ is given by $E_\sigma=(e_{\sigma(1)}\, e_{\sigma(2)}\,\cdots\,e_{\sigma(n)})$. The set $P_n$ of $n\times n$ permutation matrices is a group under matrix multiplications. Then define a map $\Phi$ from $S_n$ to $P_n$ by $\Phi(\sigma)=E_{\sigma^{-1}}$ for every $\sigma$ in $S_n$. One can check that  $\Phi$ is a group isomorphism.

The determinant map $\det$ is a group homomorphism from $P_n$ to the multiplicative group $\{\pm 1\}$. Hence $\det\cdot \Phi$ is a group homomorphism from $S_n$ to $\{\pm 1\}$, and $sgn(\sigma)=\det(\Phi(\sigma))$. 

Suppose $\sigma=\tau_1\tau_2\cdots\tau_k=\tau_1^\prime\tau_2^\prime\cdots\tau_l^\prime$ where $\tau_i$'s and $\tau_j^\prime$'s are transpositions. 
Then \begin{align*}
      \det \Phi(\sigma)&=\det \Phi(\tau_1)\det \Phi(\tau_2)\cdots\det \Phi(\tau_k)=(-1)^k \\
      &=\det \Phi(\tau_1^\prime)\det  \Phi(\tau_2^\prime)\cdots\det  \Phi(\tau_l^\prime)=(-1)^l.
     \end{align*}
Hence $k-l$ is even. So the sign of a permutation is well-defined.

\begin{definition}
The \textbf{alternating group} of degree n, $A_n$ consists of elements in $S_n$ which can be decomposed into a product of even numbers of transpositions.~\index{alternating group of degree n}
\end{definition}

%The alternating group $A_n$ looks not well-defined since an element of $S_n$ may have different decompositions. In fact, this is not a problem.

\section{Constructions of groups}
In this section, we give some constructions of new groups from known ones.
\subsection{Quotient group}

A subgroup $N$ of a group $G$ is called \textbf{normal} if $xN=Nx$ for every $x$ in $G$. Denote it by $N\unlhd G$.~\index{normal subgroup} Define $G/N$ to be the set of all left cosets of $N$ in $G$.

\begin{theorem}
\label{quotient group}
  If $N\unlhd G$, then $G/N$ is a group under the operation given by $xN\cdot yN=xyN$ for all $x,y$ in $G$. We call $G/N$ the \textbf{quotient group} $G$ modulo $N$.~\index{quotient group}
\end{theorem}
\begin{proof}
First we check that $xN\cdot yN=xyN$ is well-defined. That is, if $x_1N=x_2N$ and $y_1N=y_2N$, then $x_1y_1N=x_2y_2N$. This follows from that $(x_1y_1)^{-1}x_2y_2=y_1^{-1}x_1^{-1}x_2y_2=(y_1^{-1}y_2)(y_2^{-1}(x_1^{-1}x_2)y_2)$ is in $N$ by the normality of $N$.

Moreover
\begin{itemize}
\item $G/N$ is closed under the binary operation;
\item the binary operation on $G/N$ is associative;
\item the unit of $G/N$ is $N$;
\item the inverse of $xN$ is $x^{-1}N$.
\end{itemize}
\end{proof}

We call the map $\pi:G\to G/N$ given by $\pi(x)=xN$ for all $x$ in $G$ the~\textbf{quotient map}.~\index{quotient map}

In fact $\mbz/n\mbz$ is the group of $\mbz$ modulo $n\mbz$.

Suppose $f:G\to H$ is a group homomorphism. The set $\{x\in G\,|\, f(x)=e\}$ is called the \textbf{ kernel} of $f$. Denote it by $\ker{f}$.~\index{kernel of group homomorphism}
The set $\{f(x)\,|\, x\in G\}$ is called the \textbf{image} of $f$. Denote it by $\text{im}f$.~\index{image of a group homomorphism}
It follows that $\ker f$ is a normal subgroup of $G$ and $\text{im}f$ is a subgroup of $H$.

A group isomorphism from $G$ to itself is called an \textbf{automorphism} of $G$. If there is an isomorphism  between groups $G$ and $H$, then we say that $G$ is isomorphic to $H$. Denote it by $G\cong H$.~\index{isomorphism of groups}

\begin{remark}
  In fact, for a group $G$, a subgroup $N$ is normal iff there exists a group homomorphism $f:G\to H$ such that $N=\ker{f}$.
\end{remark}

\begin{example}\

  \begin{enumerate}
  \item $G/G\cong 1$ and $G/1\cong G$.
    \item The quotient group $\mbr/\mbz$;
    \item the quotient group $\mbz/n\mbz$ for a positive integer $n$.
  \end{enumerate}
  \end{example}
\subsection{Direct sum and direct product of groups}\

Suppose that $\{G_i\}_{i\in I}$ is a set of groups. The \textbf{direct product} of $G_i$'s, denoted by $\displaystyle\Pi_{i\in I} G_i$, is the set $\{(x_i)_{i\in I}\,|\, x_i\in G_i\, \text{for each}\, i\in I\}$ with the binary operation $(x_i)(y_i)=(x_iy_i)$ for $(x_i),(y_i)$ in $\displaystyle\Pi_{i\in I} G_i$.~\index{direct product of groups} The \textbf{direct sum} of $G_i$'s, denoted by $\displaystyle\bigoplus_{i\in I} G_i$, consists of elements $(x_i)_{i\in I}$ in $\displaystyle\Pi_{i\in I} G_i$ such that $x_i=e$ for all but finitely many $i$ in $I$, and the binary operation on  $\displaystyle\bigoplus_{i\in I} G_i$ is the same as that on $\displaystyle\Pi_{i\in I} G_i$~\index{direct sum of groups}.

Direct product and direct sum of groups give rise to many interesting  groups. For example, $\displaystyle\bigoplus_{n=1}^\infty \mbz/n\mbz$ is an infinite group in which every element has finite order and every positive integer is the order of some element.

\section{Isomorphism theorems}

In this section, we state and prove isomorphism theorems for groups.
\begin{theorem}[the first isomorphism theorem]\

Suppose $f:G\to H$ is a group homomorphism. Then $\ker f$ is a normal subgroup of $G$, the image of $f$, $\text{Im} f$ is a subgroup of $H$ and
$$G/\ker f\cong \text{Im} f.$$
\end{theorem}
\begin{proof}
Verifications of  $\ker f\unlhd G$ and $\text{Im} f\leq H$ are left as exercises.
 
Define a map $\tilde{f}: G/\ker f\to \text{Im} f$ by
$$\tilde{f}(x\ker f)=f(x)$$ for every $x$ in $G$.

It's routine to check that
\begin{itemize}
  \item $\tilde{f}$ is a well-defined group homomorphism;
  \item $\tilde{f}$ is surjective;
  \item $\tilde{f}$ is injective.
\end{itemize}
\end{proof}

\begin{theorem}[the second isomorphism theorem]\

\label{2ndisothm}

Suppose $H$ is a subgroup of a group $G$ and $K$ is a normal subgroup of $G$. Then
\begin{enumerate}
\item $HK=KH$, and $HK$ is a subgroup of $G$.
\item $H\cap K$ is a normal subgroup of $H$.
\item $HK/K\cong H/(H\cap K)$.
\end{enumerate}
\end{theorem}
\begin{proof}\

(1)For $h$ in $H$ and $k$ in $K$, we have $hk=hkh^{-1}h$ is in $KH$. Hence $HK\subseteq KH$. Also $kh=hh^{-1}kh$ is in $HK$. So $KH\subseteq HK$. Therefore $HK=KH$.

The set $HK$ contains the identity of $G$. For $h$ in $H$ and $k$ in $K$, the element $(hk)^{-1}=k^{-1}h^{-1}=h^{-1}hk^{-1}h^{-1}$ is in $HK$ since the normality of $K$  implies that $hk^{-1}h^{-1}$ is in $K$. Moreover for $h_1, h_2$ in $H$ and $k_1, k_2$ in $K$, we have that $h_1k_1h_2k_2=h_1h_2 (h_2^{-1}k_1 h_2)k_2$ is in $HK$ from the normality of $K$. So $HK$ is a subgroup of $G$.

(2)The set $H\cap K$ is a subgroup of $H$. We only prove that $H\cap K$ is normal in $H$. For any $h$ in $H$ and any $k$ in $H\cap K$, $hkh^{-1}$ is in $H\cap K$ since $K$ is normal in $G$.

(3)Define $f: HK/K\to H/(H\cap K)$ by $f(hkK)=hH\cap K$ for all $h$ in $H$ and $k$ in $K$.

The map $f$ is well-defined since if $h_1k_1K=h_2k_2K$, then $h_1K=h_2K$. So $h_1h_2^{-1}$ is in $K$. Thus $h_1h_2^{-1}$ is in $H\cap K$. This gives that $h_1H\cap K=h_2H\cap K$.

Note that $h_1k_1Kh_2k_2K=h_1Kh_2K=h_1h_2K$. The last equality follows from that $K$ is normal. Hence
\begin{align*}
  f(h_1k_1Kh_2k_2K)=f(h_1h_2K)=h_1h_2H\cap K \\
=(h_1H\cap K)( h_2H\cap K)=f(h_1k_1K)f(h_2k_2K).
\end{align*}

So $f$ is a homomorphism.

Clearly $f$ is surjective. If $h$ is in $H\cap K$, then $hk K=K$. Hence $f$ is injective.

\end{proof}

For $G=\mbz$, consider $H=m\mbz$ and $K=n\mbz$. Then $H+K=\gcd(m,n)\mbz$ and $H\cap K=\lcm(m,n)\mbz$. From Theorem~\ref{2ndisothm}, we have $$\gcd(m,n)\mbz/n\mbz\cong m\mbz/\lcm(m,n)\mbz.$$
\begin{theorem}[the third isomorphism theorem]\

If $S\subseteq K$ are normal subgroups of a group $G$, then
$$G/S \big/ K/S\cong G/K.$$
 \end{theorem}
\begin{proof}
Define a map $f$ from $G/S$ to $G/K$ by $f(xS)=xK$ for all $x$ in $G$.

The map $f$ is well-defined since $S\subseteq K$.

Since $S$ and $K$ are normal subgroups of $G$, we get that $f(xSyS)=f(xyS)=xyK=xKyK=f(xS)f(yS)$ for all $x,y$ in $G$. So $f$ is a homomorphism. Also $f$ is surjective.

Note that $\ker{f}=K/S$. Applying the first isomorphism theorem, we complete the proof.

\end{proof}

\section{Group actions}

In this section, we introduce group actions on sets, and use it as a tool to prove the Sylow's theorem. The Sylow's theorem is a fundamental theorem in finite group theory~\cite{Sylow1872}.
\begin{definition}
We say that a group $G$ acts on a set $X$ if there is a map $\alpha: G\times X \to X$ mapping $(g,x)$ to $\alpha_g(x)$ for all $g$ in $G$ and $x$ in $X$ such that
\begin{enumerate}
\item $\alpha_e(x)=x$ for all $x$ in $X$;
\item $\alpha_{gh}(x)=\alpha_g(\alpha_h(x))$ for all $g,h$ in $G$ and $x$ in $X$.
\end{enumerate}
The map $\alpha$ is called an  \textbf{action} of $G$ on $X$. Denote a group action by $G\overset{\alpha}{\curvearrowright}X$.~\index{group action}

A group $G$ can act on a set $X$ such that every group element is the identity map on $X$. Also a group can act on itself by left translations.

For every $x$ in $X$, the \textbf{orbit} of $x$, denoted by $\ox$, is the set $\{\alpha_g(x)\,|g\in G\}$.~\index{orbit} The \textbf{stabilizer} of $x$, denoted by $G_x$, is the set $\{g\in G\,|\, \alpha_g(x)=x\}$.~\index{stabilizer}
\end{definition}

We say that $a$ and $b$ in $G$ are \textbf{conjugate} if there is $g$ in $G$ such that $b=gag^{-1}$.~\index{conjugate} The set $C_a=\{gag^{-1}|\,g\in G\}$ is called the \textbf{conjugacy class} of $a$.~\index{conjugacy class}

Below we list some facts of orbits whose proofs are left as exercises.

\begin{proposition}
\label{prop:orbit}
Consider a group action $G\overset{\alpha}{\curvearrowright}X$. The  following hold:
\begin{enumerate}
  \item Any two orbits either are disjoint or coincide. Hence $X$ is the disjoint union of orbits.
  \item For every $x$ in $G$, the stabilizer $G_x$ is a subgroup of $G$.
  \item For every $x$ in $X$,  $G/G_x\cong \ox$ as sets, so $|\ox|$ divides $|G|$ when $G$ is finite.
 \end{enumerate}
\end{proposition}

\begin{remarks}\

\begin{itemize}
  \item Every group action of $G$ on $X$ corresponds to a group homomorphism $\Phi: G \to S_X$ given by $\Phi(g)=\alpha_g$ for all $g$ in $G$.
  \item Proposition~\ref{prop:orbit}(3) is called the~\textbf{orbit-stabilizer theorem}.~\index{orbit-stabilizer theorem}
\end{itemize}

\end{remarks}

Below is a group action of particular interest.

The group action $\alpha$ of $G$ on $G$ given by $\alpha_g(a)=gag^{-1}$ for $g,a$ in $G$ is called the \textbf{inner automorphism} action of $G$~\footnote{The map $\alpha_g:G\to G$ given by $\alpha_g(a)=gag^{-1}$ for all $a$ in $G$ is an isomorphism called an inner automorphism.}.~\index{inner automorphism} The conjugacy class $C_a$ of $a$ is $\oa$, the orbit of $a$ under the inner automorphism action of $G$. The stabilizer of $a$ is  called the~\textbf{centralizer} or \textbf{commutator} of $a$, denoted by $C_G(a)$. That is
$$C_G(a)=\{g\in G\,|\, gag^{-1}=a\}=\{g\in G\,|\, ga=ag\}.$$~\index{centralizer, commutator}

The \textbf{center} of $G$, $Z(G)$, is the intersection of all $C_G(a)$'s. In another word $Z(G)=\{g\in G\,|\,ga=ag\, \text{for \,all\,} a\,in\, G\}$.~\index{center}

When $G$ is a finite group, $|C_a||C_G(a)|=|G|$ by Proposition~\ref{prop:orbit}(3).

Suppose $G$ is a finite group, $p$ is a prime factor of $|G|$ and $|G|=p^km$ with $p$ and $m$ being coprime. A subgroup $H$ of $G$ with $|H|=p^k$  is called a \textbf{Sylow $p$-subgroup} of $G$ and  a subgroup $K$ of $G$ such that  $|K|=p^l$ with $l\leq k$ is called a \textbf{$p$-subgroup} of $G$.~\index{Sylow $p$-subgroup}

The following proof of part of Sylow's theorem is due to H. Wielandt.~\cite{Wielandt1959}

\begin{theorem}~[Sylow's theorem, part]\

A finite group $G$ has a Sylow $p$-subgroup.
\end{theorem}
\begin{proof}
Let $X$ be the sets of subsets of $G$ with $p^k$ elements. The group $G$ acts on $X$ by $\alpha_g(A)=gA$ for any $g$ in $G$ and any $A$ in $X$.
Note that $|X|=\binom{p^km}{p^k}$. Hence $p\nmid \binom{p^km}{p^k}$. The set $X$ is the disjoint union of orbits, so there is $B$ in $X$ such that $p\nmid |\mathcal{O}_B|$.

Next we prove that  $|G_B|=p^k$.

Note that $|\mathcal{O}_B|$ divides $|G|$ and $p\nmid |\mathcal{O}_B|$, so $|\mathcal{O}_B|$ divides $m$.  From $G/G_B\cong \mathcal{O}_B$, one has that $p^k$ divides $|G_B|$. In particular $p^k\leq |G_B|$.

Moreover fix $b$ in $B$. One can define a map $\varphi: G_B \to B$ by $\varphi(g)=gh$ for all $g$ in $G_B$. The map $\varphi$ is a well-defined injection. Hence $|G_B|\leq |B|=p^k$.
\end{proof}

\begin{corollary}~(Cauchy)\

If a prime $p$ divides $|G|$, then $G$ has an element of order $p$.
\end{corollary}
\begin{proof}
Let $H$ be a Sylow $p$-subgroup of $G$. Then $\ord_h=p^l$ with $p^l$ dividing $|G|$ for each $h\neq e$ in $H$.  Therefore $\ord_{h^{p^{l-1}}}=p$.
\end{proof}
\section{The simplicity of $A_n$}

Note that the alternating group $A_4$ is simple since $$\{e, (1\,2)(3\,4), (1\,3)(2\,4), (1\,4)(2\,3)\}\unlhd A_4.$$ However  $A_n$ is simple  when $n\geq 5$.
This fact is the main theorem in the section and plays a crucial role for proving Abel-Ruffini's theorem, that is, the polynomial $x^5-80x+5$ is unsolvable by radicals.
\begin{theorem}
~\label{thm:Ansimple}
 $A_n$ is simple for all $n\geq 5$.
\end{theorem}
Proof of Theorem~\ref{thm:Ansimple} consists of two steps:

Step 1.  $A_5$ is simple;

Step 2. For $n\geq 5$, by assuming that $A_n$ is simple, we prove that $A_{n+1}$ is simple.

We need some preliminaries.
\begin{lemma}
\label{lm:conjugate}
If $\gamma=(i_1\, i_2\,\cdots\, i_m)$ is an m-cycle in $S_n$, then a conjugate of $\gamma$, say, $\sigma\gamma\sigma^{-1}$ is  an m-cycle of the form $(\sigma(i_1)\, \sigma(i_2)\,\cdots\, \sigma(i_m))$.
\end{lemma}
\begin{proof}
Firstly $\sigma\gamma\sigma^{-1}(\sigma(i_k))=\sigma(i_{k+1})$ for all $1\leq k\leq m$.~\footnote{Here $i_{m+1}$ is understood as $i_1$.}

If $j\notin\{\sigma(i_1), \sigma(i_2),\cdots, \sigma(i_m)\}$, then there is $i\notin\{i_k\}_{k=1}^m$ such that $j=\sigma(i)$. It follows that $\sigma\gamma\sigma^{-1}(j)=\sigma\gamma\sigma^{-1}(\sigma(i))=\sigma\gamma(i)=\sigma(i)=j$.

Hence $\sigma\gamma\sigma^{-1}=(\sigma(i_1)\, \sigma(i_2)\,\cdots\, \sigma(i_m))$.
\end{proof}

\begin{lemma}
\label{lm:An3cycles}
The alternating group $A_n$ is generated by 3-cycles.
\end{lemma}
\begin{proof}
 It suffices to prove that the product of two transpositions is a product of 3-cycles.

 There are two cases.

Case 1: $(a\,b)(a\,c)=(a\,c\,b)$.

Case 2: $(a\,b)(c\,d)=(a\,b)(b\,c)(b\,c)(c\,d)=(a\,b\,c)(b\,c\,d)$.
\end{proof}

\begin{lemma}
If a normal subgroup $H$ of $A_n$ contains a 3-cycle, then $H$ contains all 3-cycles. Hence $H=A_n$.
\end{lemma}
\begin{proof}
Assume that $\sigma=(a\,b\,c)$ is in $H$.

Suppose that $\gamma=(d\,e\,f)$ with $\{d,e,f\}\cap\{a,b,c\}=\emptyset$. This only happens  when $n\geq 6$. Let $\tau=(a\,d\,b\,e)(c\,f)$. Then $\tau$ is in $A_n$ and $\gamma=\tau\sigma\tau^{-1}$ is in $H$.

Suppose that $\gamma=(a\,e\,f)$  with $\{e,f\}\cap\{b,c\}=\emptyset$. This only happens when $n\geq 5$. Let $\tau=(b\,e)(c\,f)$. Then $\tau$ is in $A_n$ and $\gamma=\tau\sigma\tau^{-1}$ is in $H$.

Suppose that $\gamma=(a\,b\,f)$ with $c\neq f$. This only happens when $n\geq 4$. Let $\tau=(a\,b)(c\,f)$. Then  $\tau$ is in $A_n$ and $\gamma=\tau\sigma^2\tau^{-1}$ is in $H$. Also $(a\,f\,b)=\gamma^2$ is in $H$.

Suppose that $\gamma=(a\,c\,b)$. Then $\gamma=\sigma^2$ is in $H$.

The above discussions exhaust all possibilities for 3-cycles distinct from $\sigma$.

\end{proof}

Now we prove the 1st step of Theorem~\ref{thm:Ansimple}.

\begin{proposition}
$A_5$ is simple.
\end{proposition}
\begin{proof}
Suppose $H\neq1$ is a normal subgroup of $A_5$. We want to prove that $H=A_5$. By Lemma~\ref{lm:An3cycles}, this is true if $H$ contains all 3-cycles. By Lemma~\ref{lm:conjugate}, since $H$ is normal, it's enough to prove that $H$ contains a 3-cycle.

Take $\sigma\neq e$ in $H$.

Without loss of generality, there are 3 cases for $\sigma$.

Case1: $\sigma=(1\,2\,3)$. If this happens, then we are done.

Case2: $\sigma=(1\,2)(3\,4)$. Take $\tau=(1\,2)(4\,5)$. Then $\tau\sigma\tau^{-1}\sigma$ is in $H$ since $\tau$ is in $A_5$. Note that
$$\tau\sigma\tau^{-1}\sigma=(\tau(1)\,\tau(2))(\tau(3)\,\tau(4))(1\,2)(3\,4)=(1\,2)(3\,5)(1\,2)(3\,4)=(3\,5)(3\,4)=(5\,3\,4).$$

So $H$ also contains a 3-cycle.

Case 3:  $\sigma=(1\,2\,3\,4\,5)$. Take $\tau=(2\,3\,4)$. Then $\tau\sigma\tau^{-1}\sigma^{-1}$ is in $H$ since $\tau$ is $A_5$. Moreover
$$\tau\sigma\tau^{-1}\sigma^{-1}=(\tau(1)\,\tau(2)\,\tau(3)\,\tau(4)\,\tau(5))\sigma^{-1}=(1\,3\,4\,2\,5)(1\,5\,4\,3\,2)=(2\,3\,5).$$

In this case, $H$ contains a 3-cycle.
\end{proof}
Now it's ready to prove Theorem~\ref{thm:Ansimple}.
\begin{proof}~[Proof of Theorem~\ref{thm:Ansimple}]\

Assume that $A_{n-1}$ is simple. We are going to prove that $A_n$ is simple.

Denote $A_n$ by $G$. Then $G$ acts on $X=\{1,2,\cdots, n\}$.

Suppose $H$ is a  normal subgroup of $G$ such that $1\subsetneqq H$.

First we prove that there exists $\sigma\neq\tau$ in $H$ such that $\sigma(i)=\tau(i)$ for some $i$ in $X$.

Take an nonidentity $\sigma$ in $H$. Since $\sigma$ is a product of disjoint cycles, there are two possible forms for $\sigma$.

1. There is an m-cycle $(i_1\,i_2\,i_3\,\cdots\, i_m)$ with $m\geq 3$ in the decomposition of $\sigma$.

In this case, let $\gamma=(i_3\,i_4\,i_5)$ with $i_4\notin\{i_1,i_2,i_3\}$. Then $\tau=\gamma\sigma\gamma^{-1}$ is in $H$ since $\gamma$ is in $A_n$ and
$$\tau=(\gamma(i_1)\,\gamma(i_2)\,\gamma(i_3)\,\cdots\, \gamma(i_m))\cdots= (i_1\,i_2\,i_4\,\cdots\, \gamma(i_m))\cdots.$$
Note that $\tau\neq\sigma$ since $\tau(i_2)=i_4$ and $\sigma(i_2)=i_3$. However $\tau(i_1)=i_2=\sigma(i_1)$.

2. $\sigma$ is a product of disjoint transpositions, i.e., $\sigma=(i_1\,i_2)(i_3\,i_4)\cdots$.

In this case, let $\gamma=(i_1\,i_2)(i_4\,i_5)$ with $i_5\notin\{i_1,i_2,i_3,i_4\}$. Then $\tau=\gamma\sigma\gamma^{-1}$ is in $H$ since $\gamma$ is in $A_5$ and
$$\tau=(\gamma(i_1)\,\gamma(i_2))(\gamma(i_3)\,\gamma(i_4))\cdots=(i_1\,i_2)(i_3\,i_5)\cdots.$$

It follows from $\tau(i_3)=i_5$ and $\sigma(i_3)=i_4$ that $\tau\neq\sigma$. But $\tau(i_1)=i_2=\sigma(i_1)$.

Hence there is an nonidentity $\sigma$ in $H$ such that $\sigma(i)=i$ for some $i$ in $X$.

The stabilizer $G_i$ of $i$ is isomorphic to $A_{n-1}$, hence $G_i$ is simple by assumption. Moreover  $G_i\cap H$ is a normal subgroup of $G_i$ and $G_i\cap H\neq1$. Therefore $G_i\cap H=G_i$ which means $G_i\subset H$. Thus $H$ contains a 3-cycle, consequently $H$ contains all 3-cycles.  So $H=G$ by Lemma~\ref{lm:An3cycles}.
\end{proof}

\section{Solvable groups}

In this section, we study the concept of solvable groups, which is, the group theoretic description of solvable polynomials.
\begin{definition}
  A group $G$ is called \textbf{solvable} if there exists a series of subgroups $\{G_i\}_{i=0}^n$ of $G$ such that
  $$1=G_n\lhd G_{n-1}\lhd G_2\cdots G_1\lhd G_0=G,$$ and $G_i$ is a normal subgroup of $G_{i-1}$ with $G_{i-1}/G_i$ being abelian for every $1\leq i\leq n$.~\index{solvable group}We call  $\{G_i\}_{i=0}^n$ a \textbf{solvable sequence} of $G$.~\index{solvable sequence}
\end{definition}
Immediately we get the following from the definition.
\begin{proposition}
A subgroup of a solvable group is also solvable, therefore, if a group contains an unsolvable subgroup, then the group itself is unsolvable.
\end{proposition}

\begin{example}~[Examples and nonexamples of solvable groups]\
\begin{enumerate}
  \item Every abelian group is solvable.
  \item For $n\leq 4$, $S_n$ is solvable. Let $H=\{e, (1\,2)(3\,4), (1\,3)(2\,4),(1\,4)(2\,3)\}$. Then $H$ is a normal subgroup of $S_4$ since $\sigma(a\,b)(c\,d)\sigma^{-1}=(\sigma(a)\,\sigma(b))(\sigma(c)\,\sigma(d))$ for every $\sigma$ in $S_4$. Moreover $1\lhd H\lhd A_4\lhd S_4$ is a solvable sequence of $S_4$.
  \item Every nonabelian simple group is unsolvable. So when $n\geq 5$, $A_n$ and $S_n$ are unsolvable.
\end{enumerate}

\end{example}
\begin{definition}
The \textbf{commutator subgroup} of a group $G$ is the subgroup generated by elements $[a,b]=ab(ba)^{-1}$ for $a,b$ in $G$. Denote the commutator subgroup of $G$ by $[G,G]$.~\index{commutator subgroup}
\end{definition}

The commutator subgroup $[G,G]$ is the smallest normal subgroup of $G$ such that the quotient is abelian. More precisely, the following hold.

\begin{theorem}
\label{thm: commutator}

The commutator subgroup $[G,G]$ is a normal subgroup of $G$ such that $G/[G,G]$ is abelian. Moreover if $H$ is a normal subgroup of $G$ with $G/H$ abelian, then $[G,G]$ is a subgroup of $H$.
\end{theorem}
\begin{proof}
For all $a,b,c$ in $G$, one has $c[a,b]c^{-1}=[cac^{-1}, cbc^{-1}]$. Thus $[G,G]$ is normal.

Moreover if $H$ is a normal subgroup of $G$ with $G/H$ abelian, then for all $a,b$ in $G$, it's true $abH=baH$, which means, $[a,b]$ is in $H$. Hence $H$ contains $[G,G]$.
\end{proof}

Denote $[G,G]$ by $G^{(1)}$ and  the commutator subgroup of $G^{(n)}$ by $G^{(n+1)}$ for $n\geq 1$.
\begin{theorem}
$G$ is solvable iff $G^{(n)}=1$ for some $n\geq 1$.
\end{theorem}
\begin{proof}
Suppose that $G^{(n)}=1$ for some $n\geq 1$.

Then $$1=G^{(n)}\lhd G^{(n-1)}\lhd\cdots \lhd  G^{(1)}\lhd G=G^{(0)}.$$ Note that $G^{(i-1)}/G^{(i)}$ is abelian for all $1\leq i\leq n$, thus $G$ is solvable.

Conversely assume that $G$ is solvable.

Then  there exists a sequence of subgroups $\{G_i\}_{i=1}^n$ of $G$ such that
  $$1=G_n\lhd G_{n-1}\lhd G_{n-2}\lhd\cdots G_{1}\lhd G_0=G,$$ and  $G_{i-1}/G_i$ is abelian for every $1\leq i\leq n$.

Since $G_0/G_1$ is abelian, by Theorem~\ref{thm: commutator}, one has that $G^{(1)}\leq G_1$.

So $G^{(2)}=[G^{(1)}, G^{(1)}]\leq [G_1, G_1]\lhd G_2$. Inductively $G^{(n)}\leq G_n=1$.

\end{proof}

The least n such that $G^{(n)}=1$ for a solvable group $G$ is called the \textbf{derived length} of $G$.~\index{derived length}

The section ends with some further properties of solvable groups.
\begin{proposition}\
\label{prop:solvable}
  \begin{enumerate}
    \item If $G$ is solvable and $N$ is a normal subgroup of $G$, then $G/N$ is solvable.
    \item Suppose $N$ is a normal subgroup of $G$. If $N$ is solvable and $G/N$ is solvable, then $G$ is solvable.
  \end{enumerate}
\end{proposition}
\begin{proof}
  (1) Suppose that  $\{G_i\}_{i=0}^n$ is a sequence of subgroups of $G$ such that
  $$1=G_n\lhd G_{n-1}\lhd G_{n-2}\lhd\cdots G_{1}\lhd G_0=G,$$ and  $G_{i-1}/G_i$ is abelian for every $1\leq i\leq n$.

 Let $\pi: G\to G/N$ be the quotient map. Then $\{\pi(G_iN)\}_{i=0}^n$ is a solvable sequence of $G/N$.

(2) Let $\{\tilde{G}_i\}_{i=0}^m$ be a solvable sequence of $G/N$ and $\{N_j\}_{j=0}^k$ be  a solvable sequence of $N$. Then one has a solvable sequence given by
$$1=N_k\lhd\cdots\lhd N_0=N= \pi^{-1}(\tilde{G}_m)\lhd \pi^{-1}(\tilde{G}_{m-1})\lhd\cdots\lhd \pi^{-1}(\tilde{G}_1)\lhd \pi^{-1}(\tilde{G}_0)=G,$$ where $\pi^{-1}(\tilde{G}_j)$ is the primage of $\tilde{G}_j$ under the quotient map $\pi:G\to G/N$ for $0\leq j\leq m$.
\end{proof}
\section*{Exercises}

\begin{exercise}
Prove that every subgroup of $\mbz$ is $n\mbz$ for some $n\geq 0$.
\end{exercise}

\begin{exercise}
Define a binary operation $*$ on $\mathbb{R}\setminus\{-1\}$ as $a*b=ab+a+b$ for $a,b\in \mathbb{R}\setminus\{-1\}$. Prove that $(\mathbb{R}\setminus\{-1\}, *)$ is a group.
\end{exercise}

\begin{exercise}
Assume that  for every $\alpha\in\Lambda$, $G_\alpha$ is a subset of $X$ and is a group. Prove that the intersection $\bigcap_{\alpha\in\Lambda}G_\alpha$ is also a subgroup. How about the union $\bigcup_{\alpha\in\Lambda}G_\alpha$?
\end{exercise}

\begin{exercise}
  Give a group homomorphism from the multiplicative group $\mbr^+$ of positive numbers to the additive group $\mbr$ of real numbers.
\end{exercise}

\begin{exercise}
Suppose $a$ and $b$ are in a group $G$, and $\ord_{a}=m$ and $\ord_{b}=n$. What can we say about $\ord_{ab}$ if $ab=ba$. What happens if $ab\neq ba$?
\end{exercise}

\begin{exercise}
  Prove that  $\ord_{ab}=  \ord_{ba}$ for $a,b$ in a group $G$.
\end{exercise}

\begin{exercise}
  Suppose $a$ is in a group $G$ with $\ord_a=n$. Find $\ord_{a^m}$ for $1\leq m\leq n$.
\end{exercise}

\begin{exercise}
\label{Fermat's little theorem}
  Prove \textbf{Fermat's little theorem}: if a prime $p$ does not divides $a$, then $a^{p-1}\equiv 1\mod p$.~\index{Fermat's little theorem}
\end{exercise}

\begin{exercise}
Prove that a subgroup  $K$ of $G$ is normal iff there exists a group homomorphism $f:G\to H$ such that $K=\ker{f}$.
\end{exercise}

\begin{exercise}
Prove that $Z(G)$ is a normal subgroup of $G$.
\end{exercise}

\begin{exercise}
A subgroup of a cyclic group is cyclic.
\end{exercise}

\begin{exercise}
  Prove the \textbf{Chinese reminder theorem}:
if $m$ and $n$ are coprime positive integers, then  $\mbz/m\mbz\times \mbz/n\mbz\cong\mbz/mn\mbz$.~\index{Chinese reminder theorem}
\end{exercise}

\begin{exercise}
Write down all group homomorphisms from $\mbz$ to $\mbz$.
\end{exercise}

\begin{exercise}
Write down all group homomorphisms from the unit circle $\mathbb{S}$ to itself.
\end{exercise}

\begin{exercise}
List all subgroups of $S_3$ and find normal subgroups among them.
\end{exercise}

\begin{exercise}
  Prove that for every group $G$, there is a set $X$ such that $G$ is isomorphic to a subgroup of the symmetric group $S_X$. In particular, every finite group is isomorphic to a subgroup of $S_n$ for some $n\geq 1$.(Hint: Take $X=G$.)
\end{exercise}

\begin{exercise}
Prove that $\displaystyle\bigoplus_{i\in I} G_i$ is a normal subgroup of  $\displaystyle\Pi_{i\in I} G_i$.
\end{exercise}

\begin{exercise}
~\label{ex:S5}
Prove that a 5-cycle and a 2-cycle generate $S_5$.
\end{exercise}

\begin{exercise}
Prove that  for $n\geq 5$,  $A_n$ is generated by 5-cycles.
\end{exercise}

\begin{exercise}
  Prove that $\mbq$ is not finitely generated.
\end{exercise}

\begin{exercise}
Suppose $\sigma=(i_1\, i_2\,\cdots\, i_m)$ is an m-cycle in $S_n$. Prove that $\sigma^k$ is a product of $\gcd(m,k)$ many $\frac{m}{\gcd(m,k)}$-cycles.
\end{exercise}

\begin{exercise}
Prove that every subgroup $\tilde{H}$ of the quotient group $G/N$ is given by a subgroup $H$ of $G$ such that $N\leq H$ and $\tilde{H}=\pi(H)$, where $\pi:G\to G/N$ is the quotient map. More precisely, there is a 1-1 correspondence between the set of subgroups of $G/N$ and the set of subgroups of $G$ larger than $N$.
\end{exercise}

\begin{exercise}
Suppose $N$ is a normal subgroup of $G$. Whether or not $G\cong N\times G/N$?
\end{exercise}

\begin{exercise}
  Suppose $M$ and $N$ are normal subgroups of $G$ and $G=MN$. Prove that $G/(M\cap N)\cong G/M\times G/N$. Apply this to show that
  $\mbz/mn\mbz\cong\mbz/m\mbz\times\mbz/n\mbz$ when $m$ and $n$ are coprime.
\end{exercise}

\begin{exercise}
 Prove that $\text{SL}_n(\mbr)$ is a normal subgroup of $\text{GL}_n(\mbr)$, and $\text{GL}_n(\mbr)/\text{SL}_n(\mbr)$ is isomorphic to $\mbr^\times$.
\end{exercise}

\begin{exercise}
  Consider the action of $\text{GL}_n(\mbr)$ on $\mbr^n$ given by $\alpha_A(x)=Ax$ for $A$ in $\text{GL}_n(\mbr)$ and $x$ in $\mbr^n$. Find $\ox$ for $x$ in $\mbr^n$.
\end{exercise}

\begin{exercise}
Suppose that $p$ is prime and $m$ is prime to $p$. Prove that $p$ is prime to $\binom{p^km}{p^k}$.
\end{exercise}

\begin{exercise}
For $n\geq 5$, suppose a normal subgroup $H$ of $S_n$ satisfies that $H\cap A_n=1$. What can we say about $H$?
\end{exercise}

\begin{exercise}
Prove that a nonabelian simple group is unsolvable.
\end{exercise}

\begin{exercise}
Prove that  for $n\geq 3$,  $A_n$ is the commutator subgroup of $S_n$.
\end{exercise}

\begin{exercise}
Fill in details in the proof of Proposition~\ref{prop:solvable}.
\end{exercise}

\begin{exercise}
~\label{ex: Solvable}
Prove that a finite group $G$ is  solvable iff there exists a sequence of subgroups $\{G_i\}_{i=1}^n$ of $G$ such that
  $$1=G_n\lhd G_{n-1}\lhd G_{n-2}\lhd\cdots G_{1}\lhd G_0=G,$$ and  $G_{i-1}/G_i$ is cyclic for every $1\leq i\leq n$.
\end{exercise}

\chapter{Basic Ring Theory}

Ring theory, together with group theory and field theory are pillars of algebra. The abstract definition of ring were given only as late as the first decade of the 20th century and the abstract ring theory formed in the hands of E. Noether and E. Artin during the same period, though some special rings such as polynomial rings, rings of algebraic integers were studied quite thoroughly because of their importance in number theory and algebraic geometry.

In this chapter, we focus on commutative ring theory, in particular, theory of polynomial rings.
\section{Definition and examples}

\begin{definition}
  A \textbf{ring} is a nonempty set $R$ together with two binary operations: addition $``+"$ and multiplication $``\cdot"$~\footnote{We write $a\cdot b$ as $ab$ for $a,b$ in $R$.} such that
  \begin{enumerate}
  \item $R$ is closed under these two operations.
    \item $(R,+)$ is an abelian group.
    \item $(ab)c=a(bc)$ for all $a,b,c$ in $R$.
    \item $(a+b)c=ac+bc$ and $a(b+c)=ab+ac$ for all $a,b,c$ in $R$.
  \end{enumerate}
  If $ab=ba$ for all $a,b$ in $R$, then $R$ is called a \textbf{commutative ring}.~\index{commutative ring}
  ~\index{ring}

If $R$ has an identity  for multiplication, that is, an element $1$ in $R$ such that $1a=a1=a$ for all $a$ in $R$, then $R$ is called a \textbf{unital ring}.

%If $R$ is commutative under multiplication, then we say that $R$ is a \textbf{commutative ring}.~\index{commutative  ring}

If a ring $R$ is unital and $(R\setminus\{0\},\cdot)$ is an abelian group, then $R$ is called a \textbf{field}.~\index{field}
\end{definition}
%\begin{remark}
  %Though fields are  special cases of rings. The definition of field appeared earlier than that of ring.~\cite{Kleiner1998}
%\end{remark}
If a subset of a ring $R$ is a ring, then it is called a \textbf{subring} of $R$.~\index{subring} If a subset of a field $\mbf$ is a field, then it is called a \textbf{subfield} of $\mbf$.~\index{subfield}

%Throughout the book, all rings are unital.

Below are some examples of rings.

\begin{example}\

  \begin{enumerate}
  \item $\mbz$, $\mbq$, $\mbr$ and $\mbc$ are unital commutative rings under the addition and the multiplication of numbers. Moreover $\mbq$, $\mbr$ and $\mbc$ are fields.
   \item For $n\geq1$, $\mbz/n\mbz$ is a unital commutative ring under addition and multiplication  modulo $n$. If $p$ is prime, then $\mbz/p\mbz$ is a field denoted by $\mbf_p$.
  \item $\mbz[i]=\{a+bi\,|\, a,b\in\mbz\}$ is a unital commutative ring under the addition and the multiplication of numbers. This ring is called the ring of \textbf{Gauss integers} and is of particular interest in number theory.~\index{Gauss integer}
  \item Under matrix addition and multiplication, the set of $n\times n$ real matrices $\text{M}_n(\mbr)$ is a noncommutative unital ring.
  \item The set of polynomials over a unital commutative ring $R$, $R[x]=\{\displaystyle\sum_{i=0}^n a_ix^i=a_nx^n+a_{n-1}x^{n-1}+\cdots+a_1x+a_0\,|\,a_i\in R \,\,\text{for\,all}\,\,0\leq i\leq n\}$ is a commutative ring under the addition and multiplication given by
        $$\sum_{i=0}^n a_ix^i+\sum_{i=0}^n b_ix^i=\sum_{i=0}^n (a_i+b_i)x^i$$ and $$(\sum_{i=0}^n a_ix^i)(\sum_{j=0}^m b_jx^j)=\sum_{k=0}^{m+n}(\sum_{i=0}^k a_ib_{k-i})x^k.$$ Special cases include $\mbz[x]$ and $\pf$ for a field $\mbf$.

  \end{enumerate}
\end{example}

An element $a$ in a ring $R$ is called a \textbf{unit} if there exists $b$ in $R$ such that $ab=ba=1$. The element $b$ is unique, called the (multiplicative) inverse of $a$ and denote it by $a^{-1}$. Denote the set of units of $R$ by $U(R)$. For example $U(\mbz)=\{\pm 1\}$ and $U(\text{M}_n(\mbr))=\text{GL}_n(\mbr)$.~\index{ring unit}

A nonzero element $a$ in a commutative ring $R$ is called a \textbf{zero divisor} if there exists nonzero $b$ in $R$ such that $ab=0$.~\index{zero divisor}

A unital commutative ring $R$ without zero divisors is called an \textbf{integral domain}. For example fields and $\mbz$ are integral domains, and $\mbz/6\mbz$ is not an integral domain.~\index{integral domain}

\begin{theorem}
  Let $R$ be a unital commutative ring. Then $R$ is an integral domain iff $R[x]$ is an integral domain.
\end{theorem}
\begin{proof}
Suppose $R$ is an integral domain. For any nonzero $f=a_nx^n+a_{n-1}x^{n-1}+\cdots+a_1x+a_0$ and $g=b_mx^m+b_{m-1}x^{m-1}+\cdots+b_1x+b_0$ in $R[x]$, it holds that $fg\neq 0$ since $a_nb_m\neq 0$.

If $R[x]$ is an integral domain, then $R$ is a unital subring of $R[x]$, hence also an integral domain.
\end{proof}

A nonempty subset $I$ of a ring $R$ is called an \textbf{ideal} of $R$ if $I$ is an additive subgroup of $R$  and for every $a$ in $I$ and $b$ in $R$, $ab$ and $ba$ are in $I$.~\index{ideal} An ideal $I$ of $R$ is called proper if $I$ is a proper subset of $R$.~\index{proper ideal} Every ring $R$ has two trivial ideals: $\{0\}$, $R$.

Given elements $a_1,a_2,\cdots, a_n$ in  a unital commutative ring $R$, the set $$\{\displaystyle\sum_{i=1}^{n}a_ib_i\,|\,b_i\in R\, \,\text{for\,all}\,1\leq i\leq n\}$$ is an ideal of $R$ called the ideal generated by $a_1,a_2,\cdots, a_n$. Denote it by $( a_1,\cdots, a_n)$. For example, the subset $I=\{\sum_{i=1}^n a_ix^i\,|\, a_i\in \mbf\, \text{for\,all\,} 1\leq i\leq n\}$ of $\pf$ is the ideal generated by the polynomial $x$. Later we will prove that every ideal of $\pf$ is generated by a single polynomial.

Suppose $I$ and $J$ are ideals of a commutative ring $R$, then the set $IJ=\{\sum_{i=1}^n a_ib_i\,|\,a_i\in I\,,b_i\in J\}$ is an ideal of $R$.

\begin{proposition}

If an ideal $I$ of a ring $R$ contains a unit, then $I=R$.
\end{proposition}
\begin{proof}
Take a unit $a$ in $I$.  Every $b$ in $R$ can be expressed as $b=b(a^{-1}a)=(ba^{-1})a$, hence is in $I$.
\end{proof}

Suppose $I$ is an ideal of a  ring $R$. Define the set  $R/I=\{a+I\,|\,a\in R\}$, the addition on $R/I$  by $(a+I)+(b+I)=a+b+I$ and the multiplication on $R/I$ by $(a+I)(b+I)=ab+I$ for $a,b$ in $R$.

\begin{theorem}
$R/I$ is a ring.
\end{theorem}
\begin{proof}
If $a_1+I=a_2+I$ and $b_1+I=b_2+I$, then $a_1+b_1+I=a_2+b_2+I$ since $(a_1+b_1)-(a_2+b_2)$ is in $I$. So the addition is well-defined.

Moreover under  the addition, $R/I$ is an abelian group:

\begin{itemize}
  \item  $(a+I)+(b+I)=a+b+I=b+a+I=(b+I)+(a+I)$.
  \item $((a+I)+(b+I))+c+I=(a+b)+c+I=a+(b+c)+I=a+I+((b+I)+(c+I))$.
  \item $(a+I)+I=a+0+I=I+(a+I)$.
  \item $(a+I)+(-a+I)=0+I=I$.
\end{itemize}
 If $a+I=b+I$, then $(a+I)(c+I)=ac+I=bc+I=(b+I)(c+I)$ since $ac-bc=(a-b)c$ is in $I$. So the multiplication is well-defined.

Also we have the following:
\begin{itemize}
  \item $((a+I)(b+I))(c+I)=(ab)c+I=a(bc)+I=(a+I)((b+I)(c+I))$.
  \item $((a+I)+(b+I))(c+I)=(a+b)c+I=(ac+I)+(bc+I)=(a+I)(c+I)+(b+I)(c+I)$.
  \item $(c+I)((a+I)+(b+I))=c(a+b)+I=(ca+I)+(cb+I)=(c+I)(a+I)+(c+I)(b+I)$.
\end{itemize}
So $R/I$ is a ring under the defined addition and multiplication.
\end{proof}
$R/I$ is called the \textbf{quotient ring} of $R$ over $I$.~\index{quotient ring}

Ring homomorphisms are defined similarly as group homomorphisms.

\begin{definition}
  A \textbf{ring homomorphism} is a map $\varphi: R\to S$ such that $\varphi(a+b)=\varphi(a)+\varphi(b)$ and $\varphi(ab)=\varphi(a)\varphi(b)$ for all $a,b$ in $R$.~\index{ring homomorphism}

  A bijective ring homomorphism is called a \textbf{ring isomorphism}.~\index{ring isomorphism}

  The \textbf{kernel} of a ring homomorphism $\varphi: R\to S$, denoted by $\ker\varphi$ is the set$\{a\in R\,|\, \varphi(a)=0\}$.~\index{kernel of ring homomorphism}

\end{definition}

Like group theory, there are isomorphism theorems for rings.

\begin{theorem}~[1st ring isomorphism theorem]\

Suppose $f:R\to S$ is a ring homomorphism. Then $\ker f$ is an ideal of $R$,  $\text{Im}f$ is a subring of $S$, and
  $$R/\ker f\cong \text{Im}f.$$
\end{theorem}

The proof of the 1st ring isomorphism theorem is similar to proof of the 1st group isomorphism theorems. We left it as an exercise.

 Consider $f:\mbz[x]\to\mbz$ given by $f(p)=p(0)$ for all $p$ in $\mbz[x]$. Then $\ker f=( x)$ and $\text{Im}f=\mbz$. The 1st ring isomorphism theorem says that $\mbz[x]/( x)\cong\mbz$.

\begin{theorem}~[2nd ring isomorphism theorem]\

Suppose $A$ is a subring of $R$ and $I$ is an ideal of $R$. Then $A+I$ is a subring of $R$, $A\cap I$ is an ideal of $A$ and
  $$(A+I)/I\cong A/(A\cap I).$$
\end{theorem}

\begin{proof}

For $a_1,a_2$ in $A$ and $b_1,b_2$ in $I$, one has that $(a_1+b_1)+(a_2+b_2)=(a_1+a_2)+(b_1+b_2)$ is in $A+I$ and $(a_1+b_1)(a_2+b_2)=a_1a_2+(b_1a_2+b_2a_1+b_1b_2)$ is in $A+I$. Hence  $A+I$ is a subring of $R$.

Moreover for any $a$ in $A$ and any $b$ in $A\cap I$, it holds that $ab$ is in $A\cap I$. So $A\cap I$ is an ideal of $A$.

  Define $f: A+I\to A/(A\cap I)$ by $f(a+b)=a+A\cap I$ for all $a$ in $A$ and $b$ in $I$.
  \begin{itemize}
    \item If $a_1+b_1=a_2+b_2$ for $a_1,a_2$ in $A$ and $b_1,b_2$ in $I$, then $a_1-a_2=b_2-b_1$ is in $A\cap I$. So $f$ is well-defined.
    \item For $a_1,a_2$ in $A$ and $b_1,b_2$ in $I$, $f((a_1+b_1)(a_2+b_2))=f(a_1a_2+a_1b_2+b_1(a_2+b_2))=a_1a_2+A\cap I=f(a_1+b_1)f(a_2+b_2)$ since $a_1b_2+b_1(a_2+b_2)$ is in $I$, and $f((a_1+b_1)+(a_2+b_2))=f((a_1+a_2)+(b_1+b_2))=a_1+a_2+A\cap I=f(a_1+b_1)+f(a_2+b_2)$. So $f$ is a ring homomorphism.
    \item $f$ is surjective since $f(a+0)=a+A\cap I$ for all $a$ in $A$.
    \item If $f(a+b)=0$ for $a$ in $A$ and $b$ in $I$, then $a$ is in $A\cap I$, and $a+b$ is in $I$. So $\ker f=I$.
  \end{itemize}
  By the 1st ring isomorphism, we complete the proof.
\end{proof}

Let $R=\mbz[x]$, $A=(x)$ and $I=(2)$. Then $A+I=( x,2)$ and $A\cap I=(2x)$. The 2nd ring isomorphism theorem says that $( x,2)/( 2)\cong ( x)/( 2x)$.

\section{The fraction field of an integral domain}

One can define a field, called the fraction field, out of an integral domain.

Suppose $R$ is an integral domain. Define a relation $``\sim"$ on $R\times R$ as follows
$(a,b)\sim(c,d)$ if $ad=bc$.

One can check that $``\sim"$ is an equivalence relation, that is, the relation $``\sim"$ satisfies that
\begin{enumerate}
  \item $(a,b)\sim(a,b)$.
  \item $(a,b)\sim(c,d)$ implies that $(c,d)\sim(a,b)$.
  \item  $(a,b)\sim(c,d)$ and $(c,d)\sim(e,f)$ implies that $(a,b)\sim(e,f)$.
\end{enumerate}

for $a,b$ in $R$ with $b\neq 0$,denote the equivalence class of $(a,b)$ in $R\times R$ by $ab^{-1}$, i.e.,
$$ab^{-1}=\{(c,d)\in R\times R\,|\,(c,d)\sim(a,b)\}.$$

Define $\text{Frac}(R)$ to be the set of equivalence classes in $R\times R$. Define the multiplication and the addition on $\text{Frac}(R)$ as $(ab^{-1})(cd^{-1})=ac(bd)^{-1}$ and $ab^{-1}+cd^{-1}=(ad+bc)(bd)^{-1}$.

\begin{theorem}
For an integral domain $R$, the set $\text{Frac}(R)$ is a field.
\end{theorem}
\begin{proof}
  First we check the addition and the multiplication on $\text{Frac}(R)$ are well-defined.

  Suppose that $(a_1,b_1)\sim(a_2,b_2)$ and $(c_1,d_1)\sim(c_2,d_2)$. Then
  $$(a_1c_1)(b_2d_2)=(a_1b_2)(c_1d_2)=(b_1a_2)(c_2d_1)=(a_2c_2)(b_1d_1). $$

  That is, $(a_1c_1)(b_1d_1)^{-1}=(a_2c_2)(b_2d_2)^{-1}$. So the multiplication is well-defined.

  Moreover
  \begin{align*}
   &(a_1d_1+b_1c_1)(b_2d_2)-(a_2d_2+b_2c_2)(b_1d_1)   \\
   &=(a_1b_2d_1d_2+b_1b_2c_1d_2)-(a_2b_1d_2d_1+b_2b_1c_2d_1) \\
    &=(a_1b_2-a_2b_1)d_1d_2+b_1b_2(c_1d_2-c_2d_1)=0.
  \end{align*}
 That is, $(a_1d_1+b_1c_1)(b_1d_1)^{-1}=(a_2d_2+b_2c_2)(b_2d_2)^{-1}$. So the addition is well-defined.

 Also the following hold:

 \begin{itemize}
   \item $ab^{-1}+cd^{-1}=(ad+bc)(bd)^{-1}=(cb+da)(db)^{-1}=cd^{-1}+ab^{-1}$.
   \item $(ab^{-1}+cd^{-1})+ef^{-1}=(ad+bc)(bd)^{-1}+ef^{-1}=(adf+bcf+bde)(bdf)^{-1}=ab^{-1}+(cf+de)(df)^{-1}=ab^{-1}+(cd^{-1}+ef^{-1})$.
   \item $ab^{-1}+0d^{-1}=(ad)(bd)^{-1}=ab^{-1}$, i.e., $0d^{-1}=0$ in  $\text{Frac}(R)$.
   \item $ab^{-1}+(-a)b^{-1}=0(b^2)^{-1}=0$.
 \end{itemize}
So $\text{Frac}(R)$ is an abelian group under addition.

Notice that $ab^{-1}=0d^{-1}$ iff $ad=0$ iff $a=0$ since $d\neq 0$ and $R$ is an integral domain.

Consider nonzero elements in $\text{Frac}(R)$. We have the following:
\begin{itemize}
   \item If $ab^{-1}\neq 0$ and $cd^{-1}\neq 0$, then $(ac)(bd)^{-1}\neq 0$ since $R$ is an integral domain.
   \item $(ab^{-1})(cd^{-1})=(ac)(bd)^{-1}=(ca)(db)^{-1}=(cd^{-1})(ab^{-1})$.
   \item $((ab^{-1})(cd^{-1}))ef^{-1}=((ac)(bd)^{-1})(ef^{-1})=(ace)(bdf)^{-1}=(ab^{-1})((ce)(df)^{-1})=ab^{-1}((cd^{-1})(ef^{-1}))$.
   \item $(ab^{-1})(dd^{-1})=ad(bd)^{-1}=ab^{-1}$, i.e., $dd^{-1}=1$ in  $\text{Frac}(R)$.
   \item $(ab^{-1})(ba^{-1})=ab(ab)^{-1}=1$.
 \end{itemize}

These verify that $\text{Frac}(R)\setminus\{0\}$ is an abelian group under multiplication.
\end{proof}

 $\text{Frac}(R)$ is called the \textbf{fraction field} of $R$.~\index{fraction field of an integral domain}

 We left the proof the following theorem as an exercise.
\begin{theorem}
~\label{thm:fraction field}
An integral domain $R$ is isomorphic to the subring $\{a1^{-1}\,|\,a\in R\}$ of $\text{Frac}(R)$. Moreover if $R$ is a subring of a field $\mbf$, then $\text{Frac}(R)$ is isomorphic to a subfield of $\mbf$.
\end{theorem}
\begin{proof}
  Define $f:R\to \text{Frac}(R)$ by $f(a)=a1^{-1}$ for all $a$ in $R$. Then
  \begin{itemize}
    \item $f$ is well-defined.
    \item $f(ab)=(ab)1^{-1}=(a1^{-1})(b1^{-1})=f(a)f(b)$ for all $a,b$ in $R$. That is, $f$ is a ring homomorphism.
    \item If $f(a)=a1^{-1}=0$, i.e., $a1^{-1}\sim 01^{-1}$, then $a=0$. Hence $f$ is injective.
  \end{itemize}
By the 1st ring isomorphism theorem, $R$ is isomorphic to $\text{Im}f$, which is a subring of $\text{Frac}(R)$.

This ends the proof of part 1 of the theorem.

Proof of part 2 is left as an exercise.
\end{proof}

\begin{corollary}
\label{cor:frac}
For every $c$ in $\text{Frac}(R)$, there is $b$ in $R$ such that $cb$ is in $R$. Here $R$ is taken as a subring of $\text{Frac}(R)$.
\end{corollary}
\begin{proof}
Denote $c$ by $ab^{-1}$ for $a,b$ in $R$. Then $ab^{-1}b1^{-1}=(ab)b^{-1}=a1^{-1}$. That is, $cb$ is in $R$.
\end{proof}

\begin{example}\
%~\footnote{The equalities here should be interpreted as field isomorphisms.}
  \begin{enumerate}
    \item $\text{Frac}(\mbz)\cong\mbq$.
    \item $\text{Frac}(\mbz[\sqrt{5}])\cong\mbq[\sqrt{5}]=\{a+b\sqrt{5}\,|\,a,b\in\mbq\}$.
    \item $\text{Frac}(\pf)\cong\{\frac{f}{g}\,|\, f,g\in \pf\,,\,g\neq 0\}$.
  \end{enumerate}
\end{example}

\section{Prime ideals and maximal ideals}

\begin{definition}
  A  proper ideal $I$ of a ring $R$ is called \textbf{prime} if $ab\in I$ implies that $a\in I$ or $b\in I$. ~\label{prime ideal}

  A proper ideal $I$ of a ring $R$ is called \textbf{maximal} if $R$ is the only ideal is strictly larger than $I$.~\label{maximal ideal}
\end{definition}

\begin{example}\

  \begin{enumerate}
    \item A field $\mbf$ has a unique prime(maximal) ideal $\{0\}$.
    \item For a prime $p$, $p\mbz$ is a prime(maximal) ideal of $\mbz$.
    \item The ideal generated by $x$ is a prime(maximal) ideal of $\pf$.
    %\item The set of singular matrices is a prime ideal of $\text{M}_n(\mbr)$.
  \end{enumerate}
\end{example}

\begin{theorem}\

~\label{thm: max}
 Every proper ideal $I$ of a unital commutative ring $R$ is contained in a maximal ideal of $R$.
\end{theorem}
\begin{proof}
The proof relies on Zorn's lemma, which says that if every completely ordered subset of a partially ordered set has an upper bound, then the partially ordered set has a maximal element.

Suppose $I$ is a proper ideal of $R$.

Now we consider the set $\mathcal{A}$ of proper ideals of $R$ containing $I$ which is ordered  by inclusion, i.e., in $\mathcal{A}$, we define $J\prec J'$ by $J\subseteq J'$. For a completely ordered subset $\mathcal{B}$ of $\mathcal{A}$, let $J_0=\displaystyle\bigcup_{J\in\mathcal{B}}J$.

For any $a,b$ in $J_0$, their exist $J_1,J_2$ in $\mathcal{B}$ such that $a\in J_1$ and $b\in J_2$. Since $\mathcal{B}$ is completely ordered, we have $J_1\prec J_2$ or  $J_2\prec J_1$. Both $a$ and $b$ locate in only one of $J_1$ and $J_2$, so does $a+b$. For any $c$ in $R$, $ac$ is in $J_1$. Therefore $J_0$ is an ideal of $R$. Note that $1\notin J_0$, so $J_0\subsetneqq R$. Thus $J_0$ is in $\mathcal{A}$ and $\displaystyle J\prec J_0$, that is, $J_0$ is an upper bound of $\mathcal{B}$. So there is a maximal element in $\mathcal{A}$, which is a maximal ideal of $R$ containing $I$.
\end{proof}

\begin{corollary}\
\label{cor:max}
Every nonunit in a unital commutative ring $R$ is contained in a maximal ideal of $R$.
\end{corollary}
\begin{proof}
Apply Theorem~\ref{thm: max} to the ideal generated by the nonunit.
\end{proof}

\begin{theorem}\

Suppose $R$ is a unital commutative ring.
\begin{enumerate}
\item $I$ is a prime ideal of $R$ iff $R/I$ is an integral domain.
\item $I$ is a maximal ideal of $R$ iff $R/I$ is a field.
\end{enumerate}
\end{theorem}

\begin{proof}\

(1) $R/I$ is an integral domain iff for $a,b$ in $I$, $(a+I)(b+I)=ab+I=0$ implies that $a+I=0$ or $b+I=0$. Equivalently, $ab$ is in $I$ implies $a$ or $b$ is in $I$.

(2) Suppose $R/I$ is a field. Assume that $I$ is a proper subset of an ideal $J$. So $J/I$ is a nonzero ideal of $R/I$. Since $R/I$ is a field, $J/I=R/I$. So $R=J$. Therefore $I$ is a maximal ideal of $R$.

Conversely assume that $I$ is a maximal ideal of $R$. For any $a\notin I$, the ideal generated by $I$ and $a$, $I+( a)=\{b+ac\,|\, b\in I,\,c\in R\}$ is an ideal such that $I\subsetneqq I+( a)$. Hence $I+( a)=R$. So there exist $c$ in $I$ and $b$ in $R$ such that $c+ab=1$. That is, $a+I$ is a unit of $R/I$. Hence $R/I$ is a field.

\end{proof}

\begin{corollary}
\label{cor:maxprime}
A maximal ideal of a unital commutative ring is a prime ideal.
\end{corollary}

\begin{remark}
  A prime ideal is not necessarily maximal. For example, $( x)$ is a prime ideal of $\mbz[x]$ since $\mbz[x]/( x)\cong\mbz$, but $( x)$ is not a maximal ideal since $ ( x)\subsetneqq ( x,2)\subsetneqq \mbz[x]$.
\end{remark}

\section{Polynomials rings}

In this section, we study the polynomial ring over a field $\mbf$. In particular, we introduce Eulidean algorithm for polynomials.

The Euclidean algorithm for polynomials, as its name indicates, is an analogue of Eulidean algorithm for integers.

The \textbf{degree} of a polynomial $f(x)=a_nx^n+a_{n-1}x^{n-1}+\cdots+a_1x+a_0$ is the largest nonnegative integer $n$ such that $a_n\neq 0$. Denote it by $\deg f$.~\index{degree} For example, in $\mbr[x]$, $\deg(x^3-1)=3$ and $\deg(x^2-x+5)=2$.
\begin{theorem}

~\label{thm:Euclidean}
Given $f$ and nonzero $g$ in $\pf$,  there exist unique $h$ and $r$ in $\pf$ such that
$f=gh+r$ with  $\deg(r)<\deg(g)$.
\end{theorem}
\begin{proof}
We can assume that $\deg f\geq\deg g$ otherwise $h=0$ and $f=r$.

Denote $f$ by $f=a_nx^n+a_{n-1}x^{n-1}+\cdots+a_1x+a_0$ and $g$ by $b_mx^m+b_{m-1}x^{m-1}+\cdots+b_1x+b_0$ with $\deg f=n\geq m=\deg g$.%~\footnote{$\deg f=n$ implies that $a_n\neq 0$.}

Let $h_1=a_nb_m^{-1}x^{n-m}$ and $r_1=f-gh_1$. Then $\deg(r_1)\leq n-1<n=\deg(f)$. If $\deg r_1<\deg g$, then we get that $h=h_1$ and $r=r_1$.
If $\deg r_1\geq\deg g$, then repeat the above process for $r_1$ and $g$ to get $r_2$. After finite steps, say $k$ steps, either $r_k=0$ or $\deg r_k<\deg g$. Then we can get the required $h$ and $r$ by repeatedly plugging the identities from bottom to top. More precisely $h=h_1+h_2+\cdots+h_{k-1}$ and $r=r_k$.

To summarize, what we do are the following:
\begin{align*}
  &f=gh_1+r_1 \\
  &r_1=gh_2+r_2 \\
  &\cdots\, \cdots \, \cdots \\
  &r_{k-1}=gh_{k-1}+r_k
\end{align*}
and $$f=g(h_1+h_2+\cdots+h_{k-1})+r_k.$$ We prove the existence of $h$ and $r$.

For the uniqueness, if $f=gh+r=g\tilde{h}+\tilde{r}$ with $\deg r<\deg g$ and $\deg\tilde{r}<\deg g$, then $g(h-\tilde{h})=\tilde{r}-r$. It holds that $h=\tilde{h}$ and $r=\tilde{r}$, otherwise $\deg(g(h-\tilde{h}))\geq\deg g>\deg(\tilde{r}-r)$ which leads to a contradiction.

\end{proof}

The polynomials  $r$  is called the~\textbf{reminder} of $f$ divided by $g$.~\index{reminder}

If $f=gh$ in $\pf$, then we say that $g$ divides $f$ or $g$ is a \textbf{factor} of $f$ denoted by $g| f$.~\index{factor}

A \textbf{common divisor} of $f$ and $g$ is a polynomial $h$ such that $h$ divides both $f$ and $g$.~\index{common divisor}

If a common divisor $h$ of $f$ and $g$ satisfies that any other common divisor of $f$ and $g$ is a factor of $h$, then it is called a \textbf{greatest common divisor} of $f$ and $g$.~\index{greatest common divisor}

Up to scalar multiples, greatest common divisors of $f$ and $g$ are unique. So we call it the greatest common divisor of $f$ and $g$ and denote it by $\gcd(f,g)$.

If $\gcd(f,g)=1$, then we say that $f$ and $g$ are \textbf{coprime} or $f$ is prime to $g$.~\index{coprime}

\begin{lemma}\
~\label{lm:Euclidean}
Suppose $f$ and $g$ are in $\pf$. Then  $\gcd(f,g)=fp+gq$ for some $p,q$ in $\pf$. In another word, the ideal $( f,g)$ generated by $f$ and $g$ is $( \gcd(f,g))$, the ideal generated by $\gcd(f,g)$.
\end{lemma}
\begin{proof}

Without loss of generality, we assume that $\deg g\leq \deg f$.

By Theorem~\ref{thm:Euclidean}, $f=gh_0+r_0$ with $\deg r_0<\deg g$. If $r_0\neq 0$, then $g=h_1r_0+r_1$ with $\deg r_1<\deg r_0$. Again if $ r_1\neq 0$, then $r_0=h_2r_1+r_2$. Do this inductively whenever $r_i\neq 0$, after finite steps,  we get that $r_{k-2}=h_{k-1}r_{k-1}+r_k$ and $r_{k-1}=h_kr_k$.

Note that $r_0$ is in $( f,g)$. Inductively each $r_i$ is in $( f,g)$ for $0\leq i\leq k$. Moreover if $r$ divides $f$ and $g$, then $r$ divides $r_k$ since $r_k$ is in $( f,g)$. Hence $r_k=\gcd(f,g)$ and $r_k$ is of the form  $h=fp+gq$ for some $p,q$ in $\pf$.
\end{proof}

\begin{remark}
In the proof of Theorem~\ref{thm:Euclidean} and Lemma~\ref{lm:Euclidean}, algorithms for finding the reminder $r$ of $f$ divided $g$ and $\gcd(f,g)$ are  called the \textbf{Euclidean algorithm} for polynomials.~\index{Euclidean algorithm}
 \end{remark}

 \section{Principal ideal domains}

A nonzero nonunit $f$  in a unital commutative ring $R$  is called~\textbf{irreducible} if $f=gh$ implies that $g$ or $h$ is a unit in $R$. Otherwise call $f$ \textbf{reducible}.~\index{irreducible element}\index{reducible}
\begin{definition}
  A \textbf{principal  ideal domain} is an integral domain $R$ in which every ideal is a~\textbf{principal ideal}, that is,  an ideal generated by a single element in $R$.~\index{principal  ideal domain}~\index{principal  ideal}
\end{definition}

$\mbz$ is a principal  ideal domain.

$\mbz[x]$ is not a principal  ideal domain since the proper ideal $( x,2)$ is not principal.
\begin{theorem}
\label{thm:polypid}
$\pf$ is a principal ideal domain.
\end{theorem}
\begin{proof}
Suppose $I$ is a nonzero ideal of  $\pf$. Assume that $f$ is a nonzero element in $I$ with the smallest degree.

Take any $g$ in $I$. By Theorem~\ref{thm:Euclidean}, there exist $h$ and $r$ in  $\pf$ such that
$g=fh+r$ with $\deg(r)<\deg (f)$. Then $r=0$ otherwise $r$ is a nonzero element in $I$ with the degree less than the degree of $f$ which is a contradiction. Hence $I=( f)$.
\end{proof}

\begin{theorem}
 In a principal ideal domain, a nonzero ideal is prime iff it is maximal.
\end{theorem}

\begin{proof}
  The necessity follows from Corollary~\ref{cor:maxprime}.

  We prove the sufficiency.

  Assume that $I$ is a prime ideal of a principal ideal domain $R$.

  Suppose that an ideal $J=(b)\supsetneqq I=(a)$.  Then there exists $c$ in $R$ such that $a=bc$. Since $I$ is prime and $b$ is not in $I$, the element $c$ is in $I$. There is $d$ in $R$ such that $c=ad$. Hence $a(bd-1)=0$. The element $a$ is nonzero since $I=(a)$ is a nonzero ideal of $R$. Thus $bd=1$, which means, $b$ is a unit. Therefore $J=R$ which implies that $I$ is a maximal ideal.
\end{proof}

%Every nonzero prime ideal of $\pf$ is maximal.

\begin{proposition}
\label{prop: primeirreducible}
If a principal ideal $(a)$ in an integral  domain $R$ is prime, then $a$ is irreducible. If $a$ is an irreducible element in  a principal ideal domain $R$, then  the ideal $(a)$ is prime.
\end{proposition}

\begin{proof}
Suppose $a=bc$ for $b,c$ in $R$. Then either $b$ or $c$ is in $(a)$. Assume that $b$ is in $(a)$. Then $b=ad$ for $d$ in $R$. It follows that
$a=bc=adc$, which implies that $dc=1$. So $c$ is a unit and  $a$ is an irreducible in $R$.

Assume that $a$ is an irreducible element in  a principal ideal domain $R$. Suppose $(a)$ is a proper subset of an ideal $J=(b)$. Then $a=bc$ for some $c$ in $R$. Since $a$ is irreducible, either $b$ or $c$ is a unit. But $c$ cannot be a unit otherwise $(a)=(b)$. Thus $b$ is a unit and $J=R$. This shows that $(a)$ is a maximal ideal, hence a prime ideal.
\end{proof}

As a consequence we get the following which says that irreducibles in a principal ideal domain $R$ behave like primes in the ring  $\mbz$ of integers.

\begin{corollary}
\label{cor:divide}
For an irreducible $p$ in a principal ideal domain $R$, if $p| ab$, then either $p|a$ or $p|b$.
\end{corollary}

\begin{definition}
A sequence $\{I_n\}_{n=1}^\infty$ of ideals of a ring $R$ is called \textbf{ascending} if $I_n\subseteq I_{n+1}$ for all $1\leq i\leq n$.  We say that an ascending sequence $\{I_n\}_{n=1}^\infty$ of ideals satisfies the \textbf{ascending chain condition}  if there exists $K$ such that $I_k=I_{k+1}$ for all $k\geq K$.~\index{ascending sequence of ideals}~\index{ascending chain condition}
\end{definition}

\begin{lemma}
~\label{lm:ascendingchain}
Every ascending sequence of ideals in a principal ideal domain satisfies the ascending chain condition.
\end{lemma}
\begin{proof}
Suppose $\{I_n=(a_n)\}_{n=1}^\infty$ is an ascending sequence of ideals in a principal ideal domain $R$. Define $\displaystyle J=\bigcup_{n=1}^\infty I_n$. Then $J$ is an ideal and $J=(b)$ for some $b$ in $R$.

It follows that $b$ is in $I_K$ for some $K\geq 1$. Hence $(b)\subseteq I_K\subseteq I_{K+1}\subseteq\cdots\subseteq (b)$. Consequently we obtain that $I_k=I_{k+1}$ for all $k\geq K$.
\end{proof}

\begin{lemma}
~\label{lm:properideal}
Let $R$ be an integral domain.
\begin{enumerate}
\item If $a$, $b$ and $c$ are nonzero nonunits in $R$ and $a=bc$, then $(a)\subsetneqq (b)$.
\item  $(f)=(g)$ iff $f=gh$ for some unit $h$.
\end{enumerate}
\end{lemma}
\begin{proof}
 (1) Suppose $(a)=(b)$. We get that $b=ad$ for some $d$ in $R$. Then $a=bc=adc$ which implies that $c$ is a unit. This is a contradiction.

 (2)It follows from (1).
\end{proof}

\begin{theorem}
\label{thm:piddec}
Up to a multiplication of units, every nonzero nonunit in a principal ideal domain can be written uniquely as a product of finitely many irreducibles.
\end{theorem}
\begin{proof}
Suppose $a$ is a nonzero nonunit in a principal ideal domain $R$.

Then by Corollary~\ref{cor:max}, the element $a$ is in a prime(maximal) ideal of $R$. By Proposition~\ref{prop: primeirreducible}, there is an irreducible $p_1$ in $R$ such that $a=p_1a_1$ for some $a_1$ in $R$. Inductively we get $a_i=p_{i+1}a_{i+1}$ for all $i\geq 1$ when $a_i$ is a nonunit. However this process terminates after finite steps otherwise by Lemma~\ref{lm:properideal}, we can get an ascending sequence of ideals
$$(a)\subsetneqq( a_1)\subsetneqq ( a_2)\subsetneqq\cdots\subsetneqq ( a_n) \subsetneqq ( a_{n+1})\subsetneqq\cdots$$
which does not satisfies the ascending chain condition, which contradicts to Lemma~\ref{lm:ascendingchain}.

Hence $a$ can be written as a product of finitely many irreducibles.

If $a=p_1p_2\cdots p_k=q_1q_2\cdots q_l$ for $p_i's$ and $q_j's$ being irreducibles, then by Corollary~\ref{cor:divide}, each $p_i$ divides some $q_j$, which means that $p_i=b_i q_j$ for some unit $b_i$ in $R$. Hence $k=l$ and the decomposition of $a$ into a product of irreducibles is unique up to a multiplication of units.
\end{proof}

\section{Eisenstein's criterion and Gauss's lemma}\

In this section, we introduce Eisentein criterion and Gauss's lemma, which briefly speaking, are tools for finding irreducible polynomials in $\pf$.%investigate the ring $R[x]$ of polynomials over a unital commutative ring $R$.

Let $R$ be a unital commutative ring. An irreducible in $R[x]$ is called an \textbf{irreducible polynomial}.~\index{irreducible polynomial}

Theorem~\ref{thm:polypid} tells us that $\pf$ is a principal ideal domain.  Apply what we get for principal ideal domains, the following hold for $\pf$:

\begin{theorem}\

  \begin{enumerate}
    \item For an irreducible $p$ in $\pf$, if $p| fg$, then either $p|f$ or $p|g$.
    \item Up to scalar multiplications, every polynomial of degree greater than 0 in $\pf$ can be uniquely written as a product of irreducible polynomials.
    \item A nonzero ideal $(f)$ of $\pf$ is prime iff $f$ is irreducible.
  \end{enumerate}
\end{theorem}

Eisenstein's criterion together with Gauss's lemma are sufficient conditions  for a polynomial in $\pf$, in particular, in $\mbq[x]$ to be irreducible.~\cite{Eisentein1850}
\begin{theorem}[Eisentein criterion]
\index{Eisentein Criterion}
Let $R$  be an integral domain and $P$ is a prime ideal of $R$. If $f(x)=a_nx^n+a_{n-1}x^{n-1}+\cdots+a_1x+a_0$ in $R[x]$ satisfies that $a_n$ is not in $P$, $a_0, a_1, \cdots, a_{n-1}$ are in $P$ and $a_0$ is not in $P^2$, then $f$ is irreducible in $R[x]$.
\end{theorem}
\begin{proof}
Suppose $f(x)=g(x)h(x)$ for $g,h$ in $R[x]$ with $\deg(g)=k<n$ and $\deg(h)=l<n$. Assume that $g(x)=c_kx^k+c_{k-1}x^{k-1}+\cdots+c_1x+c_0$ and
$h(x)=d_lx^l+d_{l-1}x^{k-1}+\cdots+d_1x+d_0$ with $c_i$'s and $d_j$'s in $R$. We are going to prove that $c_0$ and $d_0$ are in $P$, which contradicts that $a_0=c_0d_0$ is not in $P^2$.

Note that $R[x]/P[x]\cong(R/P)[x]$. See Exercise~\ref{ex:poly}.

Hence $f+P[x]=a_nx^n+P[x]=(a_n+P)x^n$ by assumption. Moreover $g+P[x]=(c_k+P)x^k+(c_{k-1}+P)x^{k-1}+\cdots+(c_1+P)x+c_0+P$, $h+P[x]=(d_l+P)x^l+(d_{k-1}+P)x^{l-1}+\cdots+(d_1+P)x+d_0+P$ and $f+P[x]=(g+P[x])(h+P[x])$.

Hence
\begin{align*}
& c_0d_0+P=0 \\
& c_1d_0+c_0d_1+P=0 \\
&\cdots\,\,\,\cdots\,\,\,\cdots \\
& c_0d_l+c_1d_{l-1}+\cdots+c_ld_0+P=0
\end{align*}

If one of $c_0+P$ and $d_0+P$, say, $c_0+P\neq 0$, then $d_0+P=0$ and inductively $d_1+P=d_2+P=\cdots=d_{l-1}+P=d_l+P=0$. This contradicts that $c_kd_l+P=a_n+P\neq 0$. So $c_0+P$ and $d_0+P$ are zero, which means,  $c_0d_0$ is in $P^2$.
\end{proof}

%By Eisentein criterion, $x^3-6x^2+4x+2$ is irreducible in $\pz$.

\begin{theorem}[Gauss's lemma]

~\label{thm: Gauss}
\index{Gauss's lemma}
Suppose $R$ is a principal ideal domain with the fractional field $\mbf$. Then $f$ in $R[x]$ is irreducible in $R[x]$ iff $f$ is irreducible in $\pf$.
\end{theorem}
\begin{proof}
Suppose $f$ is an irreducible polynomial in $R[x]$ and $f=gh$ for $g(x)=a_nx^n+a_{n-1}x^{n-1}+\cdots+a_1x+a_0$ and $h(x)=b_mx^m+b_{m-1}x^{m-1}+\cdots+b_1x+b_0$ in $\pf$.

By Corollary~\ref{cor:frac}, there is $a,b$ in $R$ such that $g_1=ag$ and $h_1=bh$ are in $R[x]$. Denote $ab$ by $c$, and  $c=p_1p_2\cdots p_k$ for $p_j$'s being irreducibles in $R$ by Theorem~\ref{thm:piddec}. We have
\begin{align*}
p_1p_2\cdots p_kf=g_1h_1=&(\tilde{a}_nx^n+\tilde{a}_{n-1}x^{n-1}+\cdots+\tilde{a}_1x+\tilde{a}_0)  \\
&(\tilde{b}_mx^m+\tilde{b}_{m-1}x^{m-1}+\cdots+\tilde{b}_1x+\tilde{b}_0),
\end{align*}

where $\tilde{a}_i=a_ia$ and $\tilde{b}_j=b_jb$ for all $1\leq i\leq n$ and $1\leq j\leq m$. Then $p_1$ divides either all $\tilde{a}_i$'s or $\tilde{b}_j$'s by induction.

So $p_1$ can be canceled out and we get $p_2\cdots p_k=h_2g_2$ with $g_2,h_2$ in $R[x]$,  $\deg g_2=\deg g$ and $\deg h_2=\deg h$.

Repeat this process, we obtain that $f=g_kh_k$ with $g_k,h_k$ in $R[x]$, $\deg g_k=\deg g$ and $\deg h_k=\deg h$. Since $f$ is irreducible in $R[x]$, we  have either $\deg g_k=0$ or $\deg h_k=0$. Hence either $\deg g=0$ or $\deg h=0$. Therefore $f$ is irreducible in $\pf$.
\end{proof}

\begin{remark}\

(1) As a consequence of Eisenstein's criterion and Gauss's lemma, for every $n\geq 1$, there is an irreducible polynomial of degree $n$  in $\mbq[x]$, say, $x^n-2$.

(2) Gauss's lemma holds for polynomial ring over a unique factorization domain, which is more general than principal ideal domain. Interested readers may consult~\cite{DummitFoote2004} and \cite{Lang2002} for details.
\end{remark}

\section*{Exercises}
\begin{exercise}
Let $C_0(\mbr)=\{f:\mbr\to \mbr|\, f\,\text{is \,continuous\, and}\,\displaystyle\lim_{x\to\infty}f(x)=0\}$. The addition and multiplication on $C_0(\mbr)$ are defined as pointwise addition and multiplication of functions. Prove that $C_0(\mbr)$ is a nonunital commutative ring.
\end{exercise}

\begin{exercise}
  Suppose $I$ and $J$ are ideals of a commutative ring $R$. Prove that $IJ$ is an ideal of $R$, and $IJ\subseteq I\cap J$. Is it true that $IJ=I\cap J$?
\end{exercise}

\begin{exercise}
Prove that the kernel of a ring homomorphism is an ideal.
\end{exercise}

\begin{exercise}
Prove that there are only two automorphisms of the ring $\mbc$ which fix real numbers.
\end{exercise}

\begin{exercise}
Prove the 1st ring isomorphism theorems.
\end{exercise}

\begin{exercise}
Let $\mbf$ be a field. What's $U(\pf)$?  Justify your answer.
\end{exercise}

\begin{exercise}
Suppose $n\geq 2$. Write down $U(\mbz/n\mbz)$. What's the necessary and sufficient condition for $\mbz/n\mbz$ being an integral domain?  Justify your answer.
\end{exercise}

\begin{exercise}
Prove the 2nd part of Theorem~\ref{thm:fraction field}.
\end{exercise}

\begin{exercise}
Find an ideal in the ring $\mbz\times\mbz$ which is prime, but not maximal.
\end{exercise}

\begin{exercise}
Prove that every prime ideal of $\mbz$ is of the form $p\mbz$ for a prime $p$.
\end{exercise}

\begin{exercise}
Write down all prime ideals of $\mbz[\frac{1+\sqrt{5}}{2}]=\{a+b\frac{1+\sqrt{5}}{2}\,|\, a,b\in\mbz\}$.
\end{exercise}

%\begin{exercise}
%A polynomial $f(x)=\prod_{i=1}^n(x-a_i)$ in $\pf$ for $a_i\in \mbf$  has no repeated roots if $f(x)$ is not a multiple of $(x-a)^2$ for any $a$ in $\mbf$. When $f(x)=a_nx^n+a_{n-1}x^{n-1}+\cdots+a_1x+a_0$, its  \textbf{derivative} $f'(x)=na_nx^{n-1}+(n-1)a_{n-1}x^{n-2}+\cdots+2a_2x+a_1$. Prove that $f(x)$ has no repeated roots iff $\gcd(f,f')=1$.~\index{derivative}
%\end{exercise}

\begin{exercise}
  Suppose $a$ is irreducible and $b$ is a unit in a unital commutative ring $R$. Prove that $ab$ is also irreducible.
\end{exercise}

\begin{exercise}
$f$ is irreducible in $\pf$ iff $f$  cannot be written as a product of two polynomials whose degrees are less than $f$.
\end{exercise}

\begin{exercise}
Suppose $f$ and $g$ are in $\pf$.  Prove that  $f$ divides $g$ and $g$ divides $f$ iff $f=\alpha g$ for some nonzero $\alpha$ in $\mbf$.
\end{exercise}

\begin{exercise}
Find the gcd of $x^3-2x^2+1$ and $x^2-x-3$ in $\mbq[x]$ and express it as a linear combination of them.
\end{exercise}

\begin{exercise}
Find the gcd of $x^4+1$ and $x^4+x^3+3x^2+2x+2$ in $\mbf_5[x]$ and express it as a linear combination of them.
\end{exercise}

\begin{exercise}
Prove that two distinct monic irreducible polynomials are coprime.
\end{exercise}

\begin{exercise}
~\label{ex:gcd}
  Suppose $p$ is an irreducible polynomial in $\pf$. Prove that $\gcd(p,f)=1$ or $p$ for any $f$ in $\pf$.
\end{exercise}

\begin{exercise}
Prove that a polynomial $p(x)$ in $\pf$ with $\deg(p)\leq 3$ is irreducible if and only if $p(x)$ has no root in $\mbf$. Is it true for $p$ with $\deg(p)>3$?
\end{exercise}

\begin{exercise}
Prove that for any prime $p$, the polynomial $x^p+p-1$ is irreducible in $\mbq[x]$.
\end{exercise}

\begin{exercise}
\label{ex:poly}
  Suppose that $P$ is a prime ideal of  a principal ideal domain $R$. Prove that $P[x]$ is a prime ideal of $R[x]$ and
  $$R[x]/P[x]\cong(R/P)[x].$$
\end{exercise}
\chapter{Basic Field Theory}

Though fields are  special cases of rings, the definition of field appeared earlier than that of ring. In this chapter, we deal with field extension theory, in particular, finite extensions of the rational field $\mathbb{Q}$, serving for the fundamental theorem of Galois theory.
\section{Finite fields}

Recall that a field is a unital commutative ring in which every nonzero is a unit.

Below are some examples of finite fields.

\begin{example}

For a prime $p$, $\mbz/p\mbz$ is a field. Denote it by $\mbf_p$.

Given an irreducible polynomial $f$ of degree $n$ in $\mbf_p[x]$. Then $\mbf_p[x]/( f)$ is a field since $( f)$ is a maximal ideal of $\mbf_p[x]$. Moreover $|\mbf_p[x]/( f)|=p^n$ since
$$\mbf_p[x]/( f)=\{a_{n-1}x^{n-1}+a_{n-2}x^{n-2}+\cdots+a_1x+a_0+( f)\,|\, a_i\in \mbf_p\,\text{for\,all\,}1\leq i\leq n-1\}.$$
For instance $x^2+x+1$ is an irreducible polynomial in $\mbf_2[x]$, then the field $\mbf_2[x]/( x^2+x+1)$ has 4 elements: $( x^2+x+1)$, $1+( x^2+x+1)$, $x+( x^2+x+1)$ and $x+1+( x^2+x+1)$. So $|\mbf_2[x]/( x^2+x+1)|=4$.
\end{example}

\begin{definition}
  The \textbf{characteristic} of a field $\mbf$ is the smallest positive integer $n$ such that $n=0$. Denote it by $\txch\mbf$. If every positive integer is nonzero, then we say $\mbf$ is of characteristic 0.
  ~\index{characteristic of a field}
\end{definition}

It is an exercise to check that for any field $\mbf$, either $\txch\mbf=0$ or $\txch\mbf$ is prime.

If $\txch\mbf=p$, then $pa=0$ for every $a$ in $\mbf$.

A finite field is of finite characteristic. Later we will see examples of infinite fields which are of finite characteristic.

One can see that $\mbq$, $\mbr$, $\mbc$ are all of characteristic 0, and $\txch\mbf_p=p$.

\begin{theorem}
  \label{thm:finitefield}
If $\mbf$ is a finite field and $\txch\mbf=p$, then $|\mbf|=p^n$ for some positive integer $n$.
\end{theorem}
\begin{proof}
  Suppose $|\mbf|=p^nm$ for some positive integer $m$ prime to $p$. If $m>1$, then $m$ has a prime factor $q$ distinct from $p$. Apply Cauchy's theorem to the additive group $\mbf$. There is an element $a$ in $\mbf$ of order $q$. So $qa=0$.

Since $q$ is prime to $p$, we have $p=lq+r$ with $0<r<q$. It follows that $0=pa=lqa+ra=ra$, which contradicts that the order of $a$ is $q$. This completes the proof.
\end{proof}

\section{Vector spaces}

In linear algebra, one deals with vector spaces over $\mbr$ or $\mbc$. Actually vector spaces could be defined over any field.
\begin{definition}
  A \textbf{vector space} $V$ over a field $\mbf$ is a nonempty set on which the operations of addition  and scalar multiplication are defined. By this we mean that, with each pair of elements $x$ and $y$ in $V$, we can associate a unique element $x+y$ that is also in $V$, and with each element $x$ in $V$ and each $\alpha$ in $\mbf$, we can associate a unique element $\alpha x$ in $V$. Moreover the following hold:
  \begin{enumerate}
    \item $(V,+)$ is an abelian group.
    \item $(\alpha\beta)x=\alpha(\beta x)$ for each $x$ in $V$ and all $\alpha,\beta$ in $\mbf$.
    \item $\alpha(x+y)=\alpha x+\alpha y$ and $(\alpha+\beta)x=\alpha x+\beta x$ for all $x,y$ in $V$ and all $\alpha,\beta$ in $\mbf$.
    \item $1x=x$ for each $x$ in $V$.
     \end{enumerate}
~\index{vector space}
An elements in a vector space is called a \textbf{vector}.~\index{vector} The field $\mbf$ is called the\textbf{ scalar field} of $V$.~\index{scalar field}

A \textbf{subspace} of a vector space  is a subset which itself is a vector space.~\index{subspace}
\end{definition}

For vectors $v_1,v_2,\cdots, v_n$ in a vector space $V$ and $\alpha_1,\alpha_2,\cdots, \alpha_n$ in $\mbf$, the expression $\alpha_1v_1+\alpha_2v_2+\cdots+\alpha_nv_n$ is called a \textbf{linear combination} of $v_1,v_2,\cdots, v_n$.~\index{linear combination}
The set of all linear combinations of $v_1,v_2,\cdots, v_n$ is called the \textbf{linear span} of $v_1,v_2,\cdots, v_n$, denoted by $\text{Span}(v_1,v_2,\cdots, v_n)$.~\index{linear span}

Vectors $v_1,v_2,\cdots, v_n$ in a vector space $V$ are called \textbf{linearly independent} if $\alpha_1v_1+\alpha_2v_2+\cdots+\alpha_nv_n=0$ implies that $\alpha_1=\alpha_2=\cdots=\alpha_n=0$. Otherwise $v_1,v_2,\cdots, v_n$ are called \textbf{linearly dependent}.~\index{linearly dependent} 

A subset $A$ of a vector space is called  linearly independent if any vectors  $v_1,v_2,\cdots, v_n$ in $A$ are linearly independent.  If a vector space $V$ has a  maximal linearly independent finite subset, then $V$ is called \textbf{finite dimensional}, otherwise \textbf{infinite dimensional}.

A set of vectors $\{v_1,v_2,\cdots, v_n\}$ in a finite dimensional vector space $V$ is called a \textbf{basis} of $V$ if they are linearly independent and $V=\text{Span}(v_1,v_2,\cdots, v_n)$.~\index{basis}

The proof of the following theorem is the same as the analogous result in linear algebra. We leave it as an exercise.

\begin{theorem}
\label{thm: dim}
 Any two bases of a vector space have the same cardinality.
\end{theorem}

For a finite dimensional vector space $V$ over a field $\mbf$, the cardinality of a basis of $V$ is called the \textbf{dimension} of $V$, denoted by $\dim_{\mbf} V$.~\index{dimension} Denote by $\dim_{\mbf} V=\infty$ when $V$ is infinite dimensional.  For example, $\dim_\mbf\{0\}=0$,
$\dim_{\mbr} \mbc=2$, $\dim_{\mbc} \mbc=1$, and $\dim_{\mbq} \mbr=\infty$.

If $\mbf$ is a subfield of $\mbe$, then $\mbe$ is a vector space over $\mbf$. Denote $\dim_{\mbf} \mbe$ by $[\mbe:\mbf]$.

\begin{theorem}
\label{thm: fielddim}
If $\mbf\subseteq\mbe\subseteq\mbk$ are fields, then $$[\mbk:\mbf]=[\mbk:\mbe][\mbe:\mbf].$$
\end{theorem}
\begin{proof}
 We may assume that $[\mbk:\mbe]$ and $[\mbe:\mbf]$ are finite since $[\mbk:\mbf]\geq [\mbk:\mbe]$ and $[\mbk:\mbf]\geq [\mbe:\mbf]$.

Let $\{v_i\}_{i=1}^n$ be a basis of $\mbe$ over $\mbf$ and $\{w_j\}_{j=1}^m$ be a basis of $\mbk$ over $\mbe$.

Suppose $\displaystyle\sum\alpha_{ij}v_iw_j=0$ for $\alpha_{ij}$'s in $\mbf$.
Then $\displaystyle\sum_{j=1}^m(\sum_{i=1}^{n}\alpha_{ij}v_i)w_j=0$. Since $\{w_j\}_{j=1}^m$ is linearly independent over $\mbe$, we have $\displaystyle\sum_{i=1}^{n}\alpha_{ij}v_i=0$ for every $1\leq j\leq m$. Furthermore since $\{v_i\}_{i=1}^m$ is linearly independent over $\mbf$, we have $\alpha_{ij}=0$ for all $1\leq i\leq n$ and $1\leq j\leq m$. So $\{v_iw_j\}_{1\leq i\leq n\,,1\leq j\leq m}$ is linearly independent over $\mbf$.

For every $v$ in $\mbk$, one has $v=\displaystyle\sum_{j=1}^{m}\beta_jw_j$ for $\beta_j's$ in $\mbe$. Moreover each $\beta_j=\displaystyle\sum_{i=1}^n \alpha_{ji}v_i$ for $\alpha_{ji}$'s in $\mbf$. Hence $v=\displaystyle\sum_{i,j}\alpha_{ji}v_iw_j$, i.e., the set $\{v_iw_j\}_{1\leq i\leq n\,,1\leq j\leq m}$ spans $\mbk$.

Therefore $\{v_iw_j\}_{1\leq i\leq n\,,1\leq j\leq m}$ is a basis of $\mbk$ over $\mbf$. Consequently
$$[\mbk:\mbf]=mn=[\mbk:\mbe][\mbe:\mbf].$$
\end{proof}

\section{Fields extensions}

\subsection{Fields extensions containing roots of polynomials}

If $\mbf$ is isomorphic to a subfield of $\mbe$, then $\mbe$ is called an \textbf{extension} of $\mbf$.~\index{extension} If $[\mbe:\mbf]<\infty$, then $\mbe$ is called a \textbf{finite extension} of $\mbf$, otherwise, an \textbf{infinite extension} of $\mbf$.~\index{finite extension}\index{infinite extension}

Assume that $\mbf$ is a subfield of $\mbe$ and $\alpha_1,\alpha_2,\cdots,\alpha_n$ are in $\mbe$. Denote by
$\mbf(\alpha_1,\alpha_2,\cdots,\alpha_n)$ the smallest subfield of $\mbe$ containing $\mbf$ and $\alpha_1,\alpha_2,\cdots,\alpha_n$. Call $\mbf(\alpha_1,\alpha_2,\cdots,\alpha_n)$ the field generated by $\alpha_1,\alpha_2,\cdots,\alpha_n$ over $\mbf$.

For example, the field $\mbq(\sqrt{2})$ is the subfield of $\mbr$ generated by $\sqrt{2}$ over $\mbq$. One can see that $\mbq(\sqrt{2})=\{a+b\sqrt{2}\,|\,a,b\in\mbq\}$ and $[\mbq(\sqrt{2}):\mbq]=2$.

\begin{theorem}\

\label{thm:root}
Suppose $p$ is an irreducible polynomial in $\pf$.  Then there exists an extension $\mbe$ of $\mbf$ such that $\mbe$ contains a root of $p$.
\end{theorem}
\begin{proof}
  Since $(p)$ is a maximal ideal,  $\pf/(p)$ is a field. Denote it by $\mbe$.

  Define $\varphi:\mbf\to\mbe$ by $\varphi(a)=a+(p)$ for all $a$ in $\mbf$. Then $\varphi$ is a nonzero homomorphism, hence injective. So $\mbf$ is isomorphic to $\text{Im}\varphi=\varphi(\mbf)=\{a+(p)\,|\,a\in\mbf\}$, which is, a subfield of $\mbe$.

  Now we can identify $p=a_nx^n+a_{n-1}x^{n-1}+\cdots+a_1x+a_0$ in $\pf$ with $\tilde{p}=(a_n+(p))x^n+(a_{n-1}+(p))x^{n-1}+\cdots+(a_1+(p))x+(a_0+(p))$ in $\varphi(\mbf)[x]$.

  Moreover
  \begin{align*}
  &\tilde{p}(x+(p))=(a_n+(p))(x+(p))^n+(a_{n-1}+(p))(x+(p))^{n-1}+\cdots\\
  &+(a_1+(p))(x+(p))+(a_0+(p))=a_nx^n+a_{n-1}x^{n-1}+\cdots+a_1x+a_0+(p)\\
  &=p+(p)=0.
  \end{align*}
So $\tilde{p}$ has a root, $x+(p)$ in $\mbe$.
\end{proof}

\begin{lemma}
\label{lm:algpoly}
Suppose $\mbf$ is a subfield of $\mbe$ and $\alpha$ in $\mbe$  is a root of an irreducible polynomial $p$ in $\pf$. Then
$\mbf(\alpha)=\{f(\alpha)\,|\,f\in \pf\}$.
\end{lemma}
\begin{proof}
Denote $\{f(\alpha)\,|\,f\in \pf\}$ by $\mbk$. Clearly $\mbk$ is a unital commutative subring of $\mbf(\alpha)$. Moreover if $f(\alpha)\neq 0$, then $f$ is prime to $p$ since $p$ is irreducible. There exist $g,h$ in $\pf$ such that $fg+ph=1$. Hence $f(\alpha)g(\alpha)=1$. This shows that $\mbk$ is a field. So $\mbf(\alpha)\subseteq \mbk$.

Therefore $\mbf(\alpha)=\mbk$.
\end{proof}

\begin{theorem}
\label{thm:algirr}
Suppose $\mbf$ is a subfield of $\mbe$ and $\alpha$ in $\mbe$  is a root of an irreducible polynomial $p$ in $\pf$. Then $\mbf(\alpha)\cong \pf/(p)$.
\end{theorem}
\begin{proof}
 By Lemma~\ref{lm:algpoly}, $\mbf(\alpha)=\mbk=\{f(\alpha)\,|\,f\in \pf\}$.

  Define $\varphi:\mbk\to \pf/(p)$ by $\varphi(f(\alpha))=f+(p)$ for all $f$ in $\pf$.
  \begin{itemize}
    \item If $f_1(\alpha)=f_2(\alpha)$, then $(f_1-f_2)(\alpha)=0$. Either $f_1-f_2$ is prime to $p$ or $p$ divides $f_1-f_2$. Since $p(\alpha)=0$, it's impossible that $f_1-f_2$ is prime to $p$. Thus $p$ divides $f_1-f_2$, which means, $f_1+(p)=f_2+(p)$. So $\varphi$ is well-defined.
    \item $\varphi(f_1(\alpha)f_2(\alpha))=\varphi(f_1f_2(\alpha))=f_1f_2+(p)=(f_1+(p))(f_2+(p))=\varphi(f_1(\alpha))\varphi(f_2(\alpha))$, and
    $\varphi(f_1(\alpha)+f_2(\alpha))=\varphi((f_1+f_2)(\alpha))=f_1+f_2+(p)=(f_1+(p))+(f_2+(p))=\varphi(f_1(\alpha))+\varphi(f_2(\alpha))$. Hence $\varphi$ is a ring homomorphism.
    \item If $f+(p)=0$, then $f$ is in $(p)$, i.e., $p$ divides $f$. So $f(\alpha)=0$. Hence $\varphi$ is injective.
    \item Obviously $\varphi$ is surjective.
  \end{itemize}
  Therefore $\mbf(\alpha)\cong \pf/(p)$.
\end{proof}

We say a polynomial $f$ in $\pf$ \textbf{split}s over a field $\mbe\supseteq\mbf$ if all roots  of $f$ are in $\mbe$.\index{split}

A polynomial $f$ in $\pf$ is called \textbf{monic} if the leading coefficient of the polynomial is 1, i.e.,  $f(x)=x^n+a_{n-1}x^{n-1}+\cdots+a_1x+a_0$.~\index{monic polynomial}
\begin{theorem}
\label{thm:split1}
  For every nonzero monic polynomial $f$ in $\pf$, there exists an extension $\mbe$ of $\mbf$ over which $f$ splits.
\end{theorem}
\begin{proof}
  We do induction on $\deg f$.

  If $\deg f=1$, then $f$ splits over $\mbf$.

  Suppose the statement holds for any polynomial of degree less than $n$. Hence every polynomial in $\pf$ of degree less than $n$ splits over an extension field of $\mbf$.

  Assume that $f$ is an irreducible monic polynomial with $\deg f=n$.

  By Theorem~\ref{thm:root}, there is an extension $\mbk$ of $\mbf$ which contains a root $\alpha$ of $f$, which means, $f(x)=(x-\alpha)g(x)$ with $\alpha$ in $\mbk$ and $g$ in $\mbk[x]$. By assumption, there is an extension $\mbe$ of $\mbk$ over which $g$ splits. Consequently $f$ splits over $\mbe$.

  This completes the proof.
\end{proof}
\subsection{Algebraic extensions}

Suppose $\mbf$ is a subfield of $\mbe$. An element $\alpha$ in $\mbe$ is called \textbf{algebraic} over $\mbf$ if it is a root of a nonzero polynomial in $\pf$. For example, $\sqrt{2}$ is algebraic over $\mbq$ since it is a root of $x^2-2$.
\begin{theorem}
\label{thm:minimalpolynomial}
If $\alpha$ in $\mbe$ is algebraic over $\mbf$, then there exists a unique monic irreducible polynomial $p$ in $\pf$ such that $p(\alpha)=0$.
\end{theorem}
\begin{proof}
We first prove the existence of $p$.

Since $\alpha$ is algebraic over $\mbf$, there is a monic polynomial $f$ in $\pf$ such that $f(\alpha)=0$. Write $f$ as a product of  irreducible monic polynomials: $f=p_1p_2\cdots p_m$. Then $p_i(\alpha)=0$ for some $1\leq i\leq m$.

Suppose $p$ and $q$ are distinct monic irreducible polynomials in $\pf$ with a common root $\alpha$. Since $p$ and $q$ are coprime, there exist $g,h$ in $\mbf$ such that $pg+qh=1$. Then $0=p(\alpha)g(\alpha)+q(\alpha)h(\alpha)=1$, which is a contradiction. This proves the uniqueness of $p$.

\end{proof}

Call $p$ the \textbf{minimal polynomial} of $\alpha$ over $\mbf$ and denote it by $m_{\alpha,\mbf}(x)$.~\index{minimal polynomial}

\begin{corollary}
Suppose $\alpha$ in $\mbe$ is algebraic over $\mbf$. If $f$ in $\pf$ satisfies that $f(\alpha)=0$, then $m_{\alpha,\mbf}(x)$ divides $f$. That is, among all nonzero polynomials in $\pf$ with $\alpha$ being a root,  $m_{\alpha,\mbf}(x)$ has the smallest degree.
\end{corollary}
\begin{proof}
  Since $m_{\alpha,\mbf}(x)$ is irreducible, either $m_{\alpha,\mbf}(x)$ divides $f$ or $m_{\alpha,\mbf}$ is prime to $f$. As the argument in the proof of Theorem~\ref{thm:minimalpolynomial} shows, it's impossible that $m_{\alpha,\mbf}$ is prime to $f$.
\end{proof}

The degree of $m_{\alpha,\mbf}(x)$ is called the \textbf{algebraic degree} of $\alpha$ over $\mbf$.~\index{algebraic degree}

\begin{corollary}
Suppose $\alpha$ in $\mbe$ is algebraic over $\mbf$. Then $\mbf(\alpha)\cong \pf/(m_{\alpha,\mbf})$.
\end{corollary}

\begin{proof}
It follows from Theorem~\ref{thm:algirr} and Theorem~\ref{thm:minimalpolynomial}.
\end{proof}

\begin{theorem}
  \label{thm:algebraicfinite}
  $\alpha$ is algebraic over $\mbf$ iff $[\mbf(\alpha):\mbf]<\infty$. Moreover $[\mbf(\alpha):\mbf]$ equals the algebraic degree of $\alpha$ when $\alpha$ is algebraic over $\mbf$.
\end{theorem}
\begin{proof}
Suppose $\alpha$ is algebraic over $\mbf$ with  the minimal polynomial $m_{\alpha,\mbf}$ and $\deg m_{\alpha,\mbf}=n$.

By Lemma~\ref{lm:algpoly}, $\mbf(\alpha)=\mbk=\{f(\alpha)\,|\,f\in \pf\}$. So $\mbf(\alpha)=\text{Span} (1,\alpha,\cdots,\alpha^{n-1})$. It follows that $[\mbf(\alpha):\mbf]\leq n<\infty$. Moreover $\{1,\alpha,\cdots,\alpha^{n-1}\}$ is linearly independent over $\mbf$ otherwise one can find a nonzero polynomial $f$ in $\pf$ with degree smaller than $n$ such that $f(\alpha)=0$, which contradicts that $\deg m_{\alpha,\mbf}=n$. Hence  $[\mbf(\alpha):\mbf]=n$.

Suppose that $[\mbf(\alpha):\mbf]=n$. Then $\{1,\alpha,\cdots,\alpha^{n-1},\alpha^n\}$ is linearly independent over $\mbf$. Hence there is a nonzero polynomial $f$ of degree no greater than $n$ in $\pf$ such that $f(\alpha)=0$, which means, $\alpha$ is algebraic over $\mbf$.

\end{proof}

\begin{theorem}
  \label{thm:algebraicfield}
  If $\alpha,\beta$ in $\mbe$ are algebraic over $\mbf$, then $\alpha\pm\beta$, $\alpha\beta$ and $\alpha^{-1}$ are also algebraic over $\mbf$. That is, algebraic elements  over $\mbf$ form a subfield of $\mbe$.
\end{theorem}
\begin{proof}
  Since $\mbf(\alpha+\beta)$ is a subfield of $\mbf(\alpha,\beta)$ and $[\mbf(\alpha,\beta):\mbf]=[\mbf(\alpha,\beta):\mbf(\alpha)][\mbf(\alpha):\mbf]<\infty$, by Theorem~\ref{thm:algebraicfinite}, $\alpha+\beta$ is algebraic over $\mbf$. Similarly $\alpha-\beta$ and $\alpha\beta$ are also algebraic over $\mbf$.

  Moreover $\mbf(\alpha^{-1})$ is a subfield of $\mbf(\alpha)$,  so $[\mbf(\alpha):\mbf]<\infty$ implies that $[\mbf(\alpha^{-1}):\mbf]<\infty$.  By Theorem~\ref{thm:algebraicfinite}, $\alpha^{-1}$ is algebraic over $\mbf$.
\end{proof}
Let $\mbf$ be a subfield of $\mbe$. We say that $\mbe$ is an \textbf{algebraic extension} of $\mbf$ if every element in $\mbe$ is algebraic over $\mbf$.~\index{algebraic extension}

Every finite extension of $\mbf$ is an algebraic extension over $\mbf$.

The following theorem is a characterization of finite extensions via algebraic extensions.

\begin{theorem}
\label{thm:alf}
  $\mbe$ is a finite extension of $\mbf$ iff in $\mbe$, there exist algebraic $\alpha_1,\alpha_2,\cdots,\alpha_n$  over $\mbf$ such that $\mbe=\mbf(\alpha_1,\alpha_2,\cdots,\alpha_n)$.
\end{theorem}
\begin{proof}
Suppose $\mbe$ is a finite extension of $\mbf$. Let $\{\alpha_i\}_{i=1}^n$ be  a basis of $\mbe$ over $\mbf$. Since $[\mbf(\alpha_i):\mbf]\leq[\mbe:\mbf]<\infty$ for all $1\leq i\leq n$, every $\alpha_i$ is algebraic over $\mbf$. Moreover $\mbe=\mbf(\alpha_1,\alpha_2,\cdots,\alpha_n)$.

Conversely if there exist algebraic $\{\alpha_i\}_{i=1}^n$  in $\mbe$ such that $\alpha_1,\alpha_2,\cdots,\alpha_n$ are algebraic over $\mbf$ and
$\mbe=\mbf(\alpha_1,\alpha_2,\cdots,\alpha_n)$. Hence for every $1\leq i\leq n$, $\alpha_{i+1}$ is algebraic over $\mbf(\alpha_1,\cdots,\alpha_i)$, that is, $[\mbf(\alpha_1,\alpha_2,\cdots,\alpha_i,\alpha_{i+1}): \mbf(\alpha_1,\alpha_2,\cdots,\alpha_i)]<\infty$. Therefore
\begin{align*}
 &[\mbe:\mbf]=[\mbf(\alpha_1,\alpha_2,\cdots,\alpha_n):\mbf]=[\mbf(\alpha_1,\alpha_2,\cdots,\alpha_{n-1},\alpha_n): \mbf(\alpha_1,\alpha_2,\cdots,\alpha_{n-1})] \\
&[\mbf(\alpha_1,\alpha_2,\cdots,\alpha_{n-1}): \mbf(\alpha_1,\alpha_2,\cdots,\alpha_{n-2})]\cdots[\mbf(\alpha_1,\alpha_2):[\mbf(\alpha_1)]<\infty.
\end{align*}
 So $\mbe$ is a finite extension of $\mbf$.
\end{proof}

The following theorem says that algebraic extensions are transitive.
\begin{theorem}
  \label{thm:alal}
  Suppose $\mbk$ is an algebraic extension of $\mbe$ and $\mbe$ is an algebraic extension of $\mbf$. Then $\mbk$ is an algebraic extension of $\mbf$.
\end{theorem}
\begin{proof}
Take any $a$ in $\mbk$.

Since $a$ is algebraic over $\mbe$, there exists a nonzero polynomial $f=a_0+a_1x+\cdots+a_nx^n$ in $\mbe[x]$ such that $f(a)=0$.

Hence $[\mbf(a,a_0,a_1,\cdots,a_n):\mbf(a_0,a_1,\cdots,a_n)]<\infty$. Note that $a_0,a_1,\cdots,a_n$ are all algebraic over $\mbf$, thus
$[\mbf(a_0,a_1,\cdots,a_n):\mbf]<\infty$. Consequently
\begin{align*}
&[\mbf(a):\mbf]\leq [\mbf(a,a_0,a_1,\cdots,a_n):\mbf] \\
&=[\mbf(a,a_0,a_1,\cdots,a_n):\mbf(a_0,a_1,\cdots,a_n)][\mbf(a_0,a_1,\cdots,a_n):\mbf]<\infty.
\end{align*}
This shows that $a$ is algebraic over $\mbf$. Since $a$ is arbitrarily chosen in $\mbk$, this means that $\mbk$ is an algebraic extension of $\mbf$.

\end{proof}

\section{Splitting fields}
\subsection{Existence and uniqueness of splitting fields}
\begin{definition}
  An extension field $\mbe$ of a field $\mbf$ is called a \textbf{splitting field} of $f$ in $\pf$ if
  \begin{enumerate}
    \item $f(x)$ factors completely over $\mbe$, i.e., $\mbe$ contains every root of $f$.
    \item $f(x)$ does not factor completely over any proper subfield of $\mbe$ containing $\mbf$.
     \end{enumerate}
     \index{splitting field}
\end{definition}

\begin{lemma}
\label{lem:unispf}
  Suppose $\varphi:\mbf_1\to\mbf_2$ is a field isomorphism and $f_1(x)=a_nx^n+a_{n-1}x^{n-1}+\cdots+a_1x+a_0$ is in $\mbf_1[x]$. Define $f_2$ in $\mbf_2[x]$ as $f_2(x)=\varphi(a_n)x^n+\varphi(a_{n-1})x^{n-1}+\cdots+\varphi(a_1)x+\varphi(a_0)$. If $\mbe_1\supseteq \mbf_1$ is a splitting field of $f_1$ and $\mbe_2\supseteq\mbf_2$ is a splitting field of $f_2$, then  there is an isomorphism $\sigma: \mbe_1\to \mbe_2$ such that $\varphi(a)=\sigma(a)$ for all $a$ in $\mbf_1$.
\end{lemma}

\begin{proof}
  we do induction on  $\deg f_1$.

If $\deg f_1=1$, then $\mbf_1$ is a splitting field of $f_1$ and $\mbf_2$ is a splitting field of $f_2$. The required isomorphism  $\sigma: \mbf_1\to \mbf_2$ is $\varphi$.

Assume that the statement holds when $f_1$ in $\mbf_1[x]$ has degree less than $n$.

Suppose $\deg f_1=n$, and $\mbe_1\supseteq \mbf_1$ and $\mbe_2\supseteq\mbf_2$ are splitting fields of $f_1$ and $f_2$ respectively.

Note that $f_1$ splits over $\mbe_1$,  so $\mbe_1$ contains  a root  $\alpha$ of $f_1$. Since $\alpha$ is algebraic over $\mbf_1$, there is an irreducible polynomial $p_1(x)=a_kx^k+\cdots+a_1x+a_0$ in $\mbf_1[x]$ which is a factor of $f_1$ and $p_1(\alpha)=0$. So $p_2(x)=\varphi(a_k)x^k+\cdots+\varphi(a_1)x+\varphi(a_0)$ is an irreducible polynomial in $\mbf_2[x]$ which is a factor of $f_2$ and splits over $\mbe_2$. Also $\beta=\varphi(\alpha)$ is a root of $p_2$.  By Theorem~\ref{thm:algirr}, we have $\mbf_1 (\alpha)\supseteq\mbf_1$, $\mbf_2(\beta)\supseteq\mbf_2$, and $$\mbf_1 (\alpha)\cong \mbf_1[x]/(p_1)\cong\mbf_2[x]/(p_2)\cong \mbf_2(\beta).$$ Moreover there is an isomorphism $\psi:\mbf_1 (\alpha)\to \mbf_2(\beta)$ such that $\psi(a)=\varphi(a)$ for all $a$ in $\mbf_1$.

Moreover $f_1=(x-\alpha)g_1$ and $f_2=(x-\beta)g_2$ with $g_1$ in $\mbf_1 (\alpha)[x]$ and $g_2$ in $\mbf_2(\beta)[x]$. Note that $g_1$ can be identified with $g_2$ via $\varphi$, and $\mbe_1\supseteq\mbf_1 (\alpha)$ is a splitting field of $g_1$ and $\mbe_2\supseteq\mbf_2(\beta)$ is a splitting field of $g_2$. Since $\deg g_1=\deg g_2=n-1$, by assumption there is an isomorphism $\sigma: \mbe_1\to \mbe_2$ such that $\sigma(b)=\psi(b)$ for all $b$ in $\mbf_1 (\alpha)$. Consequently $\sigma(a)=\psi(a)=\varphi(a)$ for all $a$ in $\mbf_1$.
\end{proof}

Next we prove the existence and uniqueness of splitting fields.
\begin{theorem}
  \label{thm:split}
 Up to isomorphism, there is a unique splitting field for every $f$ in $\pf$ with $\deg f\geq 1$.
\end{theorem}
\begin{proof}
By Theorem~\ref{thm:split1}, there is an extension $\mbk$ of $\mbf$ over which $f$ splits. Hence $f(x)=(x-\alpha_1)(x-\alpha_2)\cdots(x-\alpha_n)$ for $\alpha_1,\alpha_2,\cdots,\alpha_n$ in $\mbk$.~\footnote{We may assume that $f$ is monic.} Then $\mbf(\alpha_1,\cdots,\alpha_n)$ is a splitting field of $f$. This proves the existence of splitting fields.

The uniqueness of splitting fields follows from Lemma~\ref{lem:unispf}.
\end{proof}
\subsection{Splitting fields of some polynomials}

We calculate splitting fields of some polynomials, which is a preparation for calculations of Galois groups.

(1) The field $\mbq(\sqrt{2})$ is the splitting field of  $f(x)=x^2-2$ in $\pq$ and $[\mbq(\sqrt{2}):\mbq]=2$.  ~\footnote{$\mbr$ is not a splitting field although $f$ splits over $\mbr$.}

The fundamental theorem of algebra says that every polynomial in $\pc$ splits over $\mbc$. Hence the splitting field $\mbe$ of $x^2-2$ is a subfield of $\mbc$. Since $\sqrt{2}$ is a root of $x^2-2$, we have $\mbq(\sqrt{2})\subseteq\mbe$. Since $x^2-2$ splits over $\mbq(\sqrt{2})$, we have $\mbe=\mbq(\sqrt{2})$.

Furthermore $\mbq(\sqrt{2})=\{a+b\sqrt{2}\,|\,a,b\in\mbq\}$ and $\{1,\sqrt{2}\}$ is linearly independent over $\mbq$. Hence  $\{1,\sqrt{2}\}$ is a basis of $\mbq(\sqrt{2})$ over $\mbq$ and $[\mbq(\sqrt{2}):\mbq]=2$.

(2) The field $\mbq(\sqrt{2},\sqrt{3})$ is the splitting field of  $f(x)=(x^2-2)(x^2-3)$ in $\pq$ and $[\mbq(\sqrt{2},\sqrt{3}):\mbq]=4$.

A similar argument as before shows that $\mbq(\sqrt{2},\sqrt{3})$ is the splitting field of  $f(x)=(x^2-2)(x^2-3)$.

Next we prove that  $[\mbq(\sqrt{2},\sqrt{3}):\mbq(\sqrt{2})]=2$ which will imply that $[\mbq(\sqrt{2},\sqrt{3}):\mbq]=[\mbq(\sqrt{2},\sqrt{3}):\mbq(\sqrt{2})][\mbq(\sqrt{2}):\mbq]=4$.

    %One can check that $\{1,\sqrt{2},\sqrt{3},\sqrt{6}\}$ is a basis of $\mbq(\sqrt{2},\sqrt{3})$ over $\mbq$.

It's enough to prove that $x^2-3$ is irreducible in $\mbq(\sqrt{2})[x]$, i.e., $a+b\sqrt{2}$ for all $a,b$ in $\mbq$ cannot be a root of $x^2-3$.

Suppose $(a+b\sqrt{2})^2=3$ for some $a,b$ in $\mbq$.

Then $a^2+2b^2-3=-2ab\sqrt{2}$. Since the left hand side is rational, we have $ab=0$.

When $a=0$, we get that $2b^2-3=0$ which is impossible since $b$ is rational.

When $b=0$, we get that $a^2-3=0$ which is also impossible since $a$ is rational.

Above all this leads to a contradiction.

We have the following diagram for the intermediate fields between $\mbq$ and $\mbq(\sqrt{2},\sqrt{3})$.

\begin{center}
\begin{tikzpicture}
  \node (K) {$\mbq(\sqrt{2},\sqrt{3})$};
   \node (K2) [below of=K] {$\mbq(\sqrt{2})$};
    \node (K3) [left of=K2] {$\mbq(\sqrt{3})$};
  \node (K6) [right of=K2] {$\mbq(\sqrt{6})$};
  \node (Q) [below of=K2] {$\mbq$};
  %\node (P1) [node distance=1.4cm, left of=P, above of=P] {$\hat{P}$};
  \draw[-] (K) to node {\small{2}} (K2);
  \draw[-] (K) to node [left]{\small{2}} (K3);
  \draw[-] (K) to node  {\small{2}} (K6);
  \draw[-] (K2) to node {\small{2}} (Q);
   \draw[-] (K3) to node[left] {\small{2}} (Q);
    \draw[-] (K6) to node {\small{2}} (Q);
  %\draw[->, bend right] (P1) to node [swap] {$\hat{g}$} (A);
  %\draw[->, bend left] (P1) to node {$\hat{f}$} (B);
  %\draw[->, dashed] (P1) to node {$k$} (P);
\end{tikzpicture}
\end{center}
 Here the number  between two fields stands for the field's  dimension over the field below. For instance $[\mbq(\sqrt{2},\sqrt{3}):\mbq(\sqrt{2})]=2$.

(3) The splitting field of $x^3-2$ is $\mbq(\sqrt[3]{2},\sqrt{3}i)$ and $[\mbq(\sqrt[3]{2},\sqrt{3}i):\mbq]=6$.

 Let $\mbe\subseteq\mbc$ be the splitting field of $x^3-2$. Note that $x^3-2$ has 3 roots in $\mbc$:  $\theta=\sqrt[3]{2}(-\dfrac{1}{2}+\dfrac{\sqrt{3}}{2}i)$,  $\theta^2=\sqrt[3]{2}(-\dfrac{1}{2}-\dfrac{\sqrt{3}}{2}i)$ and $\theta^3=\sqrt[3]{2}$. Hence $\sqrt[3]{2}$ and $\sqrt{3}i=\dfrac{\theta-\theta^2}{\theta^3}$ are in $\mbe$ and $\mbq(\sqrt[3]{2},\sqrt{3}i)\subseteq\mbe$. Moreover $x^3-2$ splits over $\mbq(\sqrt[3]{2},\sqrt{3}i)$. So $\mbq(\sqrt[3]{2},\sqrt{3}i)=\mbe$.

 Note that $[\mbq(\sqrt[3]{2}):\mbq]=3$ since $x^3-2$ is an irreducible polynomial in $\pq$, and $x^2+3$ has no roots in $\mbq(\sqrt[3]{2})$ since $\mbq(\sqrt[3]{2})$ is contained in $\mbr$. So  $x^2+3$ is an irreducible polynomial in $\mbq(\sqrt[3]{2})[x]$ which implies that $[\mbq(\sqrt[3]{2},\sqrt{3}i):\mbq(\sqrt[3]{2})]=2$.

 Finally $[\mbq(\sqrt[3]{2},\sqrt{3}i):\mbq]=[\mbq(\sqrt[3]{2},\sqrt{3}i):\mbq(\sqrt[3]{2})][\mbq(\sqrt[3]{2}):\mbq]=6$.

 The following is a diagram of intermediate fields between $\mbq$ and $\mbq(\sqrt[3]{2},\sqrt{3}i)$.

 \begin{center}
\begin{tikzpicture}
  \node (K) {$\mbq(\sqrt[3]{2},\sqrt{3}i)$};
   \node (K1) [below of=K] {$\mbq(\theta)$};
    \node (K2) [left of=K1] {$\mbq(\theta^2)$};
  \node (K3) [right of=K1] {$\mbq(\theta^3)$};
  \node (K4) [node distance=1.5cm, left of=K2]  [node distance=1.5cm,  below of=K1]{$\mbq(\sqrt{3}i)$};
   %\node (K4) [node distance=0.5cm,  below of=K2] {$\mbq(\sqrt{3}i)$};
  \node (Q) [node distance=3cm, below of=K1] {$\mbq$};
  %\node (P1) [node distance=1.4cm, left of=P, above of=P] {$\hat{P}$};
  \draw[-] (K) to node {\small{2}} (K1);
  \draw[-] (K) to node [left]{\small{2}} (K2);
  \draw[-] (K) to node  {\small{2}} (K3);
  \draw[-] (K) to node [left]{\small{3}} (K4);
   \draw[-] (K1) to node {\small{3}} (Q);
   \draw[-] (K2) to node {\small{3}} (Q);
   \draw[-] (K3) to node {\small{3}} (Q);
    \draw[-] (K4) to node [left] {\small{2}} (Q);
  \end{tikzpicture}
\end{center}
(4) The splitting field of $x^4+4$ is $\mbq(i)$.

 It follows from that the roots of $x^4+4$ in $\mbc$ are $\pm 1\pm i$.

(5) Let $p$ be prime. The splitting field of $x^p-1$ in $\pq$ is $\mbq(\zeta_p)$ where $\zeta_p=e^{\frac{2\pi i}{p}}$ and $[\mbq(\zeta_p):\mbq]=p-1$ since every root of $x^p-1$ is $\zeta_p^i$ for some $1\leq i\leq p$ and $\zeta_p$ is a root of $\Phi_p(x)=\dfrac{x^p-1}{x-1}$ which is an irreducible polynomial in $\pq$ of degree $p-1$.

(6) With the same setting as the previous example, the splitting field of $x^p-2$ in $\pq$ is $\mbq(\zeta_p,\sqrt[p]{2})$ and
$[\mbq(\zeta_p,\sqrt[p]{2}):\mbq]=p(p-1)$.

It's routine to prove that $\mbq(\zeta_p,\sqrt[p]{2})$ is the splitting field of $x^p-2$.

Since $\Phi_p$  is in $\pq\subseteq \mbq(\sqrt[p]{2})[x]$ and $\Phi_p(\zeta_p)=0$, we have
$$[\mbq(\zeta_p,\sqrt[p]{2}):\mbq(\sqrt[p]{2})]=[\mbq(\sqrt[p]{2})(\zeta_p):\mbq(\sqrt[p]{2})]\leq\deg \Phi_p=p-1.$$
Also $[\mbq(\sqrt[p]{2}):\mbq]=\deg (x^p-2)=p$ since $\sqrt[p]{2}$ is a root of $x^p-2$ which is an irreducible polynomial in $\pq$.

So $[\mbq(\zeta_p,\sqrt[p]{2}):\mbq]=[\mbq(\zeta_p,\sqrt[p]{2}):\mbq(\sqrt[p]{2})][\mbq(\sqrt[p]{2}):\mbq]\leq p(p-1)$.

On the other hand, both $p=[\mbq(\sqrt[p]{2}):\mbq]$ and $p-1=[\mbq(\zeta_p):\mbq]$ divide $[\mbq(\zeta_p,\sqrt[p]{2}):\mbq]$. Since $p$ is prime to $p-1$, we get that $p(p-1)$ is a factor of $[\mbq(\zeta_p,\sqrt[p]{2}):\mbq]$. Finally $[\mbq(\zeta_p,\sqrt[p]{2}):\mbq]=p(p-1)$.

(7) The splitting field of $x^n-1$ in $\pq$ is $\mbq(\zeta_n)$ and $[\mbq(\zeta_n):\mbq]=\phi(n)$ where $\zeta_n=e^{\frac{2\pi i}{n}}$ and $\phi(n)$ is the number of positive integers less than and prime to $n$.~\footnote{The function $\phi$ defined on the set of positive integers is called \textbf{Euler's  phi function}.~\index{Euler's phi function}}

The set of roots of $x^n-1$ is a multiplicative cyclic group $\langle \zeta_n \rangle$ isomorphic to $\mbz_n$ whose generators are $\zeta_n^m$ for all $1\leq m\leq n$ prime to $n$. Hence $\langle \zeta_n \rangle$ has $\phi(n)$ generators.

Define $\Phi_n(x)=\displaystyle\prod_{\substack{1\leq m\leq n \\ \gcd(m,n)=1}}(x-\zeta_n^m)$ which is called \textbf{the $n$-th cyclotomic polynomial}.~\index{cyclotomic polynomial}

We list some of $\Phi_n$'s:
    \begin{itemize}
    \item $\Phi_p(x)=\dfrac{x^p-1}{x-1}=x^{p-1}+x^{p-2}+\cdots+x+1$ when $p$ is prime.
    \item $\Phi_1(x)=x-1$.
    \item $\Phi_4(x)=(x-i)(x+i)=x^2+1$.
    \item $\Phi_6(x)=(x-\zeta_6)(x-\zeta_6^5)=x^2-x+1$.
    \item $\Phi_8(x)=(x-\zeta_8)(x-\zeta_8^3)(x-\zeta_8^5)(x-\zeta_8^7)=x^4+1$.
    \item $\Phi_9(x)=(x-\zeta_9)(x-\zeta_9^2)(x-\zeta_9^4)(x-\zeta_9^5)(x-\zeta_9^7)(x-\zeta_9^8)=x^6+x^3+1$.
    \item $\Phi_{10}(x)=(x-\zeta_{10})(x-\zeta_{10}^3)(x-\zeta_{10}^7)(x-\zeta_{10}^9)=x^4-x^3+x^2-x+1$.
    \item $\Phi_{12}(x)=(x-\zeta_{12})(x-\zeta_{12}^5)(x-\zeta_{12}^7)(x-\zeta_{12}^{11})=x^4-x^2+1$.
    \end{itemize}

    The following theorem will imply that  $\mbq(\zeta_n)\cong\pq/(\Phi_n)$, hence $[\mbq(\zeta_n):\mbq]=\deg (\Phi_n)=\phi(n)$.

\begin{theorem}
\label{thm:Phi}
The following hold for $\Phi_n(x)$:
\begin{enumerate}
  \item[(i)] $\Phi_n(x)$ is in $\pz$ and has constant term $\pm 1$.
  \item[(ii)]$\Phi_n(x)$ is  irreducible in $\pq$.
\end{enumerate}
\end{theorem}

Before proceeding to the proof of the theorem, we present some background of separable polynomials.

\begin{definition}
  A polynomial $f$ in $\pf$ is called \textbf{separable} if it has no multiple roots, otherwise \textbf{inseparable}.~\index{separable polynomial}\index{inseparable polynomial}

\end{definition}

\begin{example}\

  \begin{enumerate}
    \item Any polynomial of degree 1 is separable.
    \item $x^2-3$ in $\pq$ is separable.
      \end{enumerate}
\end{example}
\begin{definition}
  The \textbf{derivative} of a polynomial $f(x)=a_nx^n+a_{n-1}x^{n-1}+\cdots+a_1x+a_0$ in $\pf$ is defined to be the polynomial
  $D(f)=na_nx^{n-1}+(n-1)a_{n-1}x^{n-2}+\cdots+2a_2x+a_1$ in $\pf$. \index{derivative}
\end{definition}
The following properties of derivatives hold.
\begin{proposition}\
For $f,g$ in $\pf$,
  \begin{enumerate}
    \item $D(f+g)=D(f)+D(g)$;
    \item $D(fg)=D(f)g+fD(g)$.
  \end{enumerate}
\end{proposition}

Below is a characterization of separable polynomials via derivatives.

\begin{theorem}
\label{thm:derivative}
  A polynomial $f$ in $\pf$ is separable iff $f$ is prime to $D(f)$.
\end{theorem}
\begin{proof}
  Without loss of generality, we may assume that $f$ splits over $\mbf$.

One can see that $f$ is not prime to $D(f)$ iff $f$ and $D(f)$ have a common root. Then we show that any common root of $f$ and $D(f)$ is a multiple root of $f$, which completes the proof.

Suppose $\alpha$ is a common root of $f$ and $D(f)$. Then $f(x)=(x-\alpha)g(x)$ for $g$ in $\pf$. Then $D(f)=g+(x-\alpha)D(g)$. It follows that $g(\alpha)=0$. Hence $g(x)=(x-\alpha)h$ for $h$ in $\pf$. Therefore $f(x)=(x-\alpha)^2h(x)$ which implies that $\alpha$ is a multiple root of $f$.

Conversely assume that $\alpha$ is a multiple root of $f$. Then $f(x)=(x-\alpha)^2h(x)$ for  $h$ in $\pf$. Then $D(f)=2(x-\alpha)h(x)+(x-\alpha)^2D(h)$ which implies that $\alpha$ is also a root of $D(f)$, hence a common root of $f$ and $D(f)$.

%Note that $f$ is prime to $D(f)$ iff $f$ and $D(f)$ have no common roots, which is equivalent to that $D(f)(\alpha)\neq 0$ for any root $\alpha$ of $f$. Since $f(x)=(x-\alpha)g(x)$, it holds that $D(f)(x)=g(x)+(x-\alpha)D(g)$. Hence $D(f)(\alpha)\neq 0$ iff $g(\alpha)\neq 0$, which is equivalent to that $\alpha$ is not a multiple root of $f$.
\end{proof}

Theorem~\ref{thm:derivative} has some immediate applications.

\begin{enumerate}
  \item $x^{p^n}-x$ in $\mbf_p[x]$ is separable since $D(x^{p^n}-x)=-1$.
  \item $x^n-1$ is separable over any field $\mbf$ such that $\txch \mbf$ does not divides $n$ since $D(x^n-1)=nx^{n-1}$ and $1=\frac{1}{n}x(nx^{n-1})-(x^n-1)$, which means, $x^n-1$ is prime to $D(x^n-1)$.
  \item If $\txch \mbf$ divides $n$, then $x^n-1$ is not separable since $D(x^n-1)=nx^{n-1}=0$.
\end{enumerate}

Now we are ready to prove Theorem~\ref{thm:Phi}.

 \begin{proof}[Proof of Theorem~\ref{thm:Phi}]\

(i) We do induction on $n$.

$\Phi_1(x)=x-1$ is in $\pz$ and has constant term $-1$.

Assume that for every $m<n$,  $\Phi_m$ is in $\pz$ and has constant term $\pm 1$.

In $\langle \zeta_n \rangle$, every element  has order $d$ for some $d$ dividing $n$ and an element of order $d$ is of the form $\zeta_n^{m\frac{n}{d}}=\zeta_d^m$ for some $1\leq m\leq n$ which is prime to $d$.

Hence $$\displaystyle x^n-1=\prod_{d|n}\Phi_d(x)=\Phi_n(x)\prod_{\substack{1\leq d<n \\ d|n}}\Phi_d(x).$$

It follows from the assumption that $\Phi_n(x)$ is in $\pq$.

Define $\tf_n(x)=\displaystyle\prod_{\substack{1\leq d<n \\ d|n}}\Phi_d(x)$.

Then $x^n-1=\tf_n\Phi_n$. By assumption every $\Phi_d(x)$ is in $\pz$ and has constant term $\pm 1$ for all $1\leq d<n$ and $d|n$, so is $\tf_n$.

Hence the constant term of $\Phi_n(x)$ is $\pm 1$. Inductively every coefficient of $\Phi_n(x)$ is an integer.

(ii)According to Gauss's lemma, it suffices to prove that $\Phi_n(x)$ is irreducible in $\pz$.

Suppose $f$ is an irreducible monic polynomial in $\pz$ which divides $\Phi_n(x)$. We are going to show that every $\zeta_n^m$ is a root of $f$ which implies that $f=\Phi_n$ and $\Phi_n$ is irreducible.

For any root $\zeta$ of $f$, we show that $f(\zeta^p)=0$ for any prime $p\nmid n$, which implies that $f(\zeta_n^m)=0$ for any  $1\leq m\leq n$ prime to n.

Suppose there exists $p\nmid n$ such that $f(\zeta^p)\neq 0$ for some root $\zeta$ of $f$. Since $\zeta^p$ is a root of $\Phi_n$,  there is $g$ in $\pz$ such that $\Phi_n=fg$ and $g(\zeta^p)=0$. That is to say that $\zeta$ is a root of the polynomial $g(x^p)$. Then $g(x^p)=f(x)h(x)$ for $h$ in $\pz$ since $f$ is irreducible and $\zeta$ is a root of $f$.

Consider the ring homomorphism from $\pz$ to $\mbf_p[x]$ mapping $q=a_0+a_1x+\cdots+a_kx^k$ in $\pz$ to $\bar{q}=\bar{a}_0+\bar{a}_1x+\cdots+\bar{a}_kx^k$ in $\mbf_p[x]$. For $g(x)=b_0+b_1x+\cdots+b_lx^l$, one has
$$\bar{f}(x)\bar{h}(x)=\bar{g}(x^p)=\bar{b}_0+\bar{b}_1x^p+\cdots+\bar{b}_lx^{lp}=\bar{b}_0^p+\bar{b}_1^px^p+\cdots+\bar{b}_l^px^{lp}=(\bar{g}(x))^p.$$

Here $\bar{b}_i^p=\bar{b}_i$ for $0\leq i\leq l$ follows from Fermat's little theorem.

  Hence $\bar{f}(x)$ and $\bar{g}(x)$ have a common root. Thus $\overline{\Phi_n}=\bar{f}\bar{g}$ is inseparable in $\mbf_p[x]$. However $\overline{\Phi_n}$ is  separable in $\mbf_p[x]$ since $x^n-1$ is separable in $\mbf_p[x]$. This is a contradiction.

 \end{proof}

 The field $\mbq(\zeta_n)$ is called a \textbf{cyclotomic extension} of $\mbq$.~\index{cyclotomic extension}

\section{Separable extensions}

In this section, we discuss separable extensions which paves the way for later discussions of Galois extensions.

\begin{proposition}
  \label{prop:sep}
  Assume that $\txch \mbf=0$. Then every irreducible polynomial in $\pf$ is separable. Moreover a monic polynomial in $\pf$ is separable iff it is a product of distinct monic irreducible polynomials.
\end{proposition}

\begin{proof}
  If $\txch \mbf=0$, then $D(p)\neq 0$ and $\deg (D(p))<\deg p$ for every irreducible polynomial in $\pf$. Hence $p$ is prime to $D(p)$.

Every $f$ in $\pf$ is a product of irreducibles and a root of $f$ is multiple iff this root is a common root of at least two of these irreducibles. However distinct irreducibles have no common roots. This shows that $f$ is separable iff $f$ is a product of distinct irreducibles.
\end{proof}

\begin{lemma}
\label{lm:p}
  Suppose $\mbf$ is a field with $\txch \mbf=p$. Then $\varphi:\mbf\to\mbf$ given by $\varphi(a)=a^p$ for all $a$ in $\mbf$ is an injective ring homomorphism. Moreover if $\mbf$ is a finite field, then $\varphi$ is an isomorphism.
\end{lemma}
\begin{proof}
 For all $a,b$ in $\mbf$, we have $\varphi(a+b)=(a+b)^p=\sum_{i=0}^{p}\binom{p}{i}a^ib^{p-i}=a^p+b^p$ since $p$ divides $\binom{p}{i}$'s for all $1\leq i\leq p-1$, and $\varphi(ab)=(ab)^p=a^pb^p=\varphi(a)\varphi(b)$. So $\varphi$ is a ring homomorphism. Moreover $\varphi(1)=1$, that is, $\varphi$ is a nonzero ring homomorphism between fields, hence $\varphi$ is injective.

When  $\mbf$ is finite,  the injection $\varphi$ is also a surjection, hence an isomorphism.
\end{proof}

\begin{definition}
A field $\mbk$ is called \textbf{perfect} if $\txch\mbk=0$ or $\mbk=\mbk^p$ when $\txch\mbk=p$, otherwise call it \textbf{imperfect}.~\index{perfect field}\index{imperfect field}
\end{definition}

Except for fields with characteristic 0, by Lemma~\ref{lm:p}, finite fields are also perfect fields.

 The fraction field of $\mbf_2[t]$, $\rm {Frac}(\mbf_2[t])=\{\dfrac{p}{q}\,|\,p,q\in\mbf_2[t], \,q\neq0\}$ is  imperfect since  no $p,q$ in $\mbf_2[t]$ satisfy that $(\frac{p}{q})^2=t$.

\begin{theorem}
Every irreducible polynomial over a perfect field is separable. Moreover a polynomial over a perfect field is separable iff it is a product of distinct irreducible polynomials.
\end{theorem}
\begin{proof}
  For fields of characteristic 0, we already prove the statements in Proposition~\ref{prop:sep}.

  Assume that $\txch\mbk=p$ and $\mbk=\mbk^p$, which means, the ring homomorphism $\varphi: \mbf\to\mbf$ given by $\varphi(a)=a^p$ for all $a$ in $\mbf$ is surjective.

  Suppose $f$ is irreducible over $\mbf$. If $f$ is not separable, then $f$ is not prime to $D(f)$, which means, $f$ divides $D(f)$. This only happens when $D(f)=0$. Hence $f=a_mx^{mp}+a_{m-1}x^{(m-1)p}+\cdots+a_1x^p+a_0$. Since $\varphi$ is surjective, we have $a_i=b_i^p$ for all $0\leq i\leq m$. Therefore
  \begin{align*}
 f&=a_mx^{mp}+a_{m-1}x^{(m-1)p}+\cdots+a_1x^p+a_0 \\
 &=b_m^px^{mp}+b_{m-1}^px^{(m-1)p}+\cdots+b_1^px^p+b_0^p\\
 &=(b_mx^{m}+b_{m-1}x^{m-1}+\cdots+b_1x+b_0)^p,
  \end{align*}
  which contradicts that $f$ is irreducible. Thus every irreducible polynomial over $\mbf$ is separable.

  The second statement follows from the first statement and the argument is similar to that in Proposition~\ref{prop:sep}.
\end{proof}

\begin{definition}
A field extension $\mbk/\mbf$ is called a \textbf{separable extension} if every element of $\mbk$ is a root of a separable polynomial in $\pf$.~\index{separable extension}
\end{definition}

\begin{corollary}
  Every finite extension of a perfect field is separable.
\end{corollary}

Over an imperfect field, there exist irreducible inseparable polynomials.

Let $\mbf=\rm {Frac}(\mbf_2[t])$. The polynomial $x^2-t$ in $\pf$ is irreducible since $x^2-t$ has no root in $\mbf$. Moreover $D(x^2-t)=2x=0$, so $\gcd(x^2-t, D(x^2-t))=x^2-t$, which implies that $x^2-t$ is inseparable.

\section*{Exercises}
\begin{exercise}
  Prove that $\{v_1,v_2,\cdots, v_n\}$ is a basis of a vector space $V$ iff  $\{v_1,v_2,\cdots, v_n\}$ is a maximal linearly independent subset of $V$.
\end{exercise}

\begin{exercise}
  Prove that $\dim_\mbq\mbr=\infty$.
\end{exercise}

\begin{exercise}
Prove Theorem~\ref{thm: dim}.
\end{exercise}

\begin{exercise}
Prove that when the characteristic of a field is nonzero, it is prime.
\end{exercise}

\begin{exercise}
Prove that $[\mbf_{p^n}: \mbf_p]=n$.
\end{exercise}

\begin{exercise}
Suppose $\alpha$ in $\mbe$ is algebraic over $\mbf$ and $p$ is a nonzero monic polynomial of the smallest degree in $\pf$ such that $p(\alpha)=0$.  Prove that $p=m_{\alpha,\mbf}$.
\end{exercise}

\begin{exercise}
  Find the minimal polynomial of $\sqrt[5]{7-\sqrt[3]{2}}$ in $\pq$.
\end{exercise}

\begin{exercise}
  Prove that a finite extension is an algebraic extension, but the converse is not true.
\end{exercise}

\begin{exercise}
  Find the splitting fields of the following polynomials in $\pq$:
  \begin{enumerate}
    \item $(x^2+2x-2)(x^2-3)$;
    \item $x^4-4$;
    \item $x^3+5$.
  \end{enumerate}
 \end{exercise}

\begin{exercise}
Let $p$ be prime.  Prove that $\Phi_p(x)=\dfrac{x^p-1}{x-1}$ is irreducible in $\pq$.
\end{exercise}

\begin{exercise}
  Find the splitting field of $x^2+x+1$ in $\mbf_2[x]$.
\end{exercise}

\begin{exercise}
  Prove that every polynomial of the form $x^n+a$ in $\pq$ is separable.
\end{exercise}

\begin{exercise}
  Let $\mbf=\mbf_2(t)$. Show that $x^2-t$ in $\pf$ has no root in $\mbf$.
\end{exercise}

\begin{exercise}
  Prove that $\mbq(\sqrt{2}+\sqrt{3})=\mbq(\sqrt{2},\sqrt{3})$. Find the minimal polynomial of $\sqrt{2}+\sqrt{3}$ over $\mbq$.
\end{exercise}

\begin{exercise}
Determine $[\mbq(\sqrt[3]{2+\sqrt{2}}):\mbq]$.
\end{exercise}

\begin{exercise}
  Prove that $\mbq(\sqrt[3]{2})$ is not a subfield of $\mbq(\zeta_n)$ for any positive integer $n$.
\end{exercise}

\begin{exercise}
Prove that if $[\mbf(\alpha):\mbf]$ is odd then $\mbf(\alpha)=\mbf(\alpha^2)$. Then prove the following generalization: if $n\nmid [\mbf(\alpha):\mbf]$, then $[\mbf(\alpha):\mbf(\alpha^n)]<n$.
\end{exercise}

\begin{exercise}
 Let $\overline{\mbq}$ be the field of algebraic numbers over $\mbq$. Prove that every polynomial in $\overline{\mbq}[x]$ splits over $\overline{\mbq}$. That is, $\overline{\mbq}$ is algebraically closed.
\end{exercise}

\begin{exercise}
Prove that if a subfield $\mbf$ of $\mbc$ contains $\zeta_n$ for $n$ odd, then $\mbf$ contains $\zeta_{2n}$.
\end{exercise}

\begin{exercise}
Prove that $\mbk$ is a splitting field of $f$ in $\pf$ iff every irreducible polynomial in $\pf$ who has a root in $\mbk$ splits over $\mbk$.
\end{exercise}

\begin{exercise}
Prove that if $\mbk_1$ and $\mbk_2$ are splitting fields over $\mbf$, then $\mbk_1\mbk_2$ and $\mbk_1\cap\mbk_2$ are also splitting fields over $\mbf$.
\end{exercise}

\begin{exercise}
  Prove that $x^p-x+a$ for $a$ nonzero is an irreducible separable polynomial in $\mbf_p[x]$.
\end{exercise}
%\end{comment}
\chapter{Galois Theory and Its Applications}

\section{Characterizations of Galois extension}
\begin{definition}
  Let $\mbk$ be a field.
  \begin{enumerate}
    \item An isomorphism $\sigma:\mbk\to\mbk$ is called an \textbf{automorphism} of $\mbk$. Denote by $\txau(\mbk)$ the set of automorphisms of $\mbk$.~\index{field automorphism}
    \item An automorphism $\sigma$ is said to fix an element $a$ in $\mbk$ if $\sigma(a)=a$.
    \item Let $\mbk/\mbf$ be a field extension.
    $\txau(\mbk/\mbf)=\{\sigma\,|\,\sigma(a)=a\,{\rm for\,all\,a\,in}\,\mbf\}$.
  \end{enumerate}

\end{definition}

Clearly $\txau(\mbk)$ is a group under composition of maps and $\txau(\mbk/\mbf)$ is a subgroup of $\txau(\mbk)$. For  a subgroup $G$ of $\txau(\mbk/\mbf)$, define $\mbk^G=\{a\in\mbk\,|\,\sigma(a)=a\,{\rm for\,all\,\sigma\,in}\,G\}$. It's clear that $\mbk^G$ is an intermediate field between $\mbf$ and $\mbk$.

The following says that elements of $\txau(\mbk/\mbf)$ are permutations of roots of polynomials in $\pf$.

\begin{proposition}
  Let  $\mbk/\mbf$ be a field extension and $\alpha$ in $\mbk$ be algebraic over $\mbf$. Then for any $\sigma$ in $\txau(\mbk/\mbf)$, $\sigma(\alpha)$ is also algebraic over $\mbf$ with the same minimal polynomial as $\alpha$.
\end{proposition}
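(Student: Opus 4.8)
The plan is to push the defining equation of $\alpha$ forward through $\sigma$ and exploit that $\sigma$ fixes $\mbf$ pointwise and respects the ring operations. Write $m_{\alpha,\mbf}(x)=x^n+a_{n-1}x^{n-1}+\cdots+a_1x+a_0$ with all $a_i$ in $\mbf$, so that $\alpha^n+a_{n-1}\alpha^{n-1}+\cdots+a_1\alpha+a_0=0$. Applying $\sigma$ and using that $\sigma$ is a ring homomorphism with $\sigma(a_i)=a_i$, one gets
\[
\sigma(\alpha)^n+a_{n-1}\sigma(\alpha)^{n-1}+\cdots+a_1\sigma(\alpha)+a_0=0,
\]
that is, $m_{\alpha,\mbf}(\sigma(\alpha))=0$. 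In particular $\sigma(\alpha)$ is a root of a nonzero polynomial in $\pf$, hence algebraic over $\mbf$, so its minimal polynomial $m_{\sigma(\alpha),\mbf}$ exists by Theorem~\ref{thm:minimalpolynomial}.

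Next I would invoke the divisibility property of minimal polynomials: since $m_{\alpha,\mbf}(\sigma(\alpha))=0$, the polynomial $m_{\sigma(\alpha),\mbf}$ divides $m_{\alpha,\mbf}$ (this is the corollary to Theorem~\ref{thm:minimalpolynomial}). But $m_{\alpha,\mbf}$ is irreducible in $\pf$, so its only monic factors are $1$ and itself; as $m_{\sigma(\alpha),\mbf}$ has degree at least $1$, it must equal $m_{\alpha,\mbf}$. This gives $m_{\sigma(\alpha),\mbf}=m_{\alpha,\mbf}$, which is the claim.

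There is essentially no obstacle here; the only point requiring a word of care is the justification that $\sigma$ commutes with evaluation at $\alpha$, i.e. that $\sigma\big(\sum_i a_i\alpha^i\big)=\sum_i \sigma(a_i)\sigma(\alpha)^i$, which follows at once from $\sigma$ being a ring homomorphism (additive and multiplicative), together with $\sigma|_{\mbf}=\mathrm{id}$ since $\sigma\in\txau(\mbk/\mbf)$. I would also note explicitly that $m_{\alpha,\mbf}$ being monic is needed to pin down the minimal polynomial uniquely, but this is built into the definition already recorded in the excerpt.
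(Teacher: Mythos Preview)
Your proof is correct and follows essentially the same approach as the paper: apply $\sigma$ to the equation $m_{\alpha,\mbf}(\alpha)=0$, use that $\sigma$ is a ring homomorphism fixing $\mbf$, and conclude $m_{\alpha,\mbf}(\sigma(\alpha))=0$. In fact you are slightly more thorough than the paper, which simply asserts that the minimal polynomial of $\sigma(\alpha)$ is again $p$ without spelling out the divisibility-plus-irreducibility step that you supply.
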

\begin{proof}
  Suppose that $p(x)=x^n+a_{n-1}x^{n-1}+\cdots+a_1x+a_0$ in $\pf$ is the minimal polynomial of $\alpha$. Then
  \begin{align*}
  &p(\sigma(\alpha))=\sigma(\alpha)^n+a_{n-1}\sigma(\alpha)^{n-1}+\cdots+a_1\sigma(\alpha)+a_0\\
  &=\sigma(\alpha^n+a_{n-1}\alpha^{n-1}+\cdots+a_1\alpha+a_0)=\sigma(p(\alpha))=0.
  \end{align*}
  So $\sigma(\alpha)$ is algebraic over $\mbf$ and its minimal polynomial is also $p$.
\end{proof}
For Galois theory, one mainly cares about a special type of field extensions called Galois extension, which has many characterizations. Below we give some.
\begin{theorem}~\label{thm:GaloisCharacterizations}
  Suppose $\mbk$ is a finite extension of $\mbf$.

  The following are equivalent:
  \begin{enumerate}
    \item $\mbk$ is a splitting field of a separable polynomial $f(x)$ in $\pf$;
    \item $|\txau(\mbk/\mbf)|=[\mbk:\mbf]$;
    \item $\mbk^{\txau(\mbk/\mbf)}=\mbf$;
    \item $\mbk$ is a normal separable extension of $\mbf$, i.e., every element of $\mbk$ is a root of a separable irreducible polynomial in $\pf$ which splits over $\mbk$.
  \end{enumerate}
\end{theorem}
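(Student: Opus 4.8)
The plan is to establish the cycle of implications $(1)\Rightarrow(2)\Rightarrow(3)\Rightarrow(4)\Rightarrow(1)$, since each of these steps is the most natural one to carry out with the tools already developed. Throughout I will write $G=\txau(\mbk/\mbf)$ and freely use Theorem~\ref{thm: fielddim} (multiplicativity of degrees), Lemma~\ref{lem:unispf} (extending isomorphisms between splitting fields), and Theorem~\ref{thm:algirr} together with Theorem~\ref{thm:minimalpolynomial}.

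For $(1)\Rightarrow(2)$, I would prove a sharper statement by induction on $[\mbk:\mbf]$: if $\mbk$ is the splitting field of a separable $f\in\pf$, then the number of embeddings of $\mbk$ into a fixed algebraic closure that restrict to a given embedding $\tau:\mbf\to\Omega$ is exactly $[\mbk:\mbf]$; taking $\tau=\mathrm{id}$ and noting that every such embedding lands in $\mbk$ (since $\mbk$ is a splitting field) gives $|G|=[\mbk:\mbf]$. The inductive step: pick an irreducible factor $p$ of $f$ of degree $d>1$, adjoin a root $\alpha$; there are exactly $d$ ways to extend $\tau$ to $\mbf(\alpha)$ because $p$ is separable and $\tau f$ splits, so $p$ factors into $d$ distinct linear factors over $\Omega$; then apply the induction hypothesis to $\mbk$ as splitting field of $f$ over $\mbf(\alpha)$, with $[\mbk:\mbf(\alpha)]=[\mbk:\mbf]/d$, and multiply. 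This counting step is really the technical heart of the whole theorem. For $(2)\Rightarrow(3)$: let $\mbf'=\mbk^{G}$; every element of $G$ fixes $\mbf'$ pointwise, so $G\leq\txau(\mbk/\mbf')$, hence $[\mbk:\mbf']\geq|G|=[\mbk:\mbf]$ by $(2)$; but $\mbf\subseteq\mbf'\subseteq\mbk$ forces $[\mbk:\mbf']\leq[\mbk:\mbf]$, so $\mbf'=\mbf$.

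For $(3)\Rightarrow(4)$: take $\alpha\in\mbk$ with minimal polynomial $p$ over $\mbf$, and let $\alpha=\alpha_1,\dots,\alpha_r$ be the distinct elements of the orbit $\{\sigma(\alpha):\sigma\in G\}$. Set $g(x)=\prod_{i=1}^r(x-\alpha_i)$. Every $\sigma\in G$ permutes the $\alpha_i$, hence fixes each coefficient of $g$, so $g\in\mbk^G[x]=\pf$ by $(3)$; since $g(\alpha)=0$ and $\deg g\leq\deg p$, minimality gives $g=p$. Thus $p$ has distinct roots (separable) and splits over $\mbk$ (normal), which is exactly $(4)$. For $(4)\Rightarrow(1)$: $\mbk/\mbf$ is finite, so $\mbk=\mbf(\beta_1,\dots,\beta_m)$ for some $\beta_i$ by Theorem~\ref{thm:alf}; let $p_i$ be the minimal polynomial of $\beta_i$ over $\mbf$ and let $f$ be the product of the distinct $p_i$. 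Each $p_i$ is separable and splits over $\mbk$ by $(4)$, so $f$ is separable (a product of distinct irreducibles, by Proposition~\ref{prop:sep}) and splits over $\mbk$; moreover $\mbk$ is generated over $\mbf$ by roots of $f$, so no proper subfield containing $\mbf$ suffices, i.e., $\mbk$ is the splitting field of $f$. I expect the embedding-counting argument in $(1)\Rightarrow(2)$ — in particular being careful that separability of $f$ is exactly what guarantees $d$ distinct extensions at each stage, and tracking that all embeddings have image inside $\mbk$ — to be the main obstacle; the other three implications are short once $(3)$ is available as the pivot.
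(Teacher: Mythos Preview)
Your cycle $(1)\Rightarrow(2)\Rightarrow(3)\Rightarrow(4)\Rightarrow(1)$ matches the paper's, and your arguments for $(3)\Rightarrow(4)$ and $(4)\Rightarrow(1)$ are essentially identical to those in the text. Your $(1)\Rightarrow(2)$ via counting embeddings into an algebraic closure is a standard variant of the paper's argument; the paper phrases the same induction in terms of the orbit--stabilizer bijection $G/G_\alpha\to\{\text{roots of }m_{\alpha,\mbf}\}$ together with a transitivity lemma (Lemma~\ref{lem:transitivity}), but the content is the same.

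The one real gap is in your $(2)\Rightarrow(3)$. You write ``$G\leq\txau(\mbk/\mbf')$, hence $[\mbk:\mbf']\geq|G|$,'' but this step needs the general inequality $|\txau(\mbk/\mbe)|\leq[\mbk:\mbe]$ for an \emph{arbitrary} finite extension $\mbk/\mbe$, and at this point in the cycle you only have hypothesis $(2)$, not $(1)$, so you cannot assume $\mbk$ is a splitting field of a separable polynomial over $\mbf'$. Your embedding-counting argument does yield this upper bound in general (separability is only needed for the \emph{lower} bound), but you have not said so, and as written the step is unjustified. The paper takes a genuinely different route here: it proves directly that $[\mbk:\mbk^G]\geq|G|$ for any finite set of automorphisms (Theorem~\ref{thm:GaloisGroupSize}) via Dedekind's linear independence of characters (Theorem~\ref{thm: LinearIndependence}), avoiding any appeal to the embedding bound. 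Either approach works, but you must either invoke the character argument or explicitly extract the general upper bound from your $(1)\Rightarrow(2)$ machinery before using it.
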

\begin{proof}

We prove that (1)$\Rightarrow$(2)$\Rightarrow$(3)$\Rightarrow$(4)$\Rightarrow$(1).

(1)$\Rightarrow$(2).

We do induction on $[\mbk: \mbf]$.

Suppose  $[\mbk: \mbf]=1$. Then $\mbk=\mbf$. So $\txau(\mbk/\mbf)=\{e\}$ and $|\txau(\mbk/\mbf)|=1=[\mbk: \mbf]$.%If an irreducible polynomial $p(x)$ in $\pf$ has a root in $\mbk$, then $deg(p)=1$. So $\mbk$ contains all roots of $p$.

Suppose whenever $[\mbk:\mbe]<n$ and $\mbk$ is a splitting field of a separable polynomial $g(x)$ in $\mbe[x]$, we have $|\txau(\mbk/\mbe)|=[\mbk:\mbe]$.

Now assume that $[\mbk: \mbf]=n$.

Let $\alpha$ in $\mbk$ be a root of $f(x)$ with $\deg(m_{\alpha, \mbf})(x)=m\geq 2$. Note that $\mbk$ is also a splitting field of $f(x)$ in $\mbf(\alpha)[x]$. By induction, one has $|\txau(\mbk/\mbf(\alpha))|=[\mbk:\mbf(\alpha)]$.

For convenience, denote $\txau(\mbk/\mbf)$ by $G$ and $\txau(\mbk/\mbf(\alpha))$ by $G_\alpha$. Clearly $G_\alpha$ is a subgroup of $G$.

Next we construct a bijection $\psi_\alpha$ from $G/G_\alpha$ onto the set $X_\alpha$ of all roots of $m_{\alpha, \mbf}(x)$. Since $m_{\alpha, \mbf}(x)$ is a factor of $f(x)$ and $f(x)$ is separable, we have $|X_\alpha|=\deg(m_{\alpha, \mbf}(x))$.  So this bijection gives that $[G: G_\alpha]=\deg(m_{\alpha, \mbf}(x))=[\mbf(\alpha): \mbf]$ which implies that $|G|=|G_\alpha|[G: G_\alpha]=[\mbk:\mbf(\alpha)][\mbf(\alpha): \mbf]=[\mbk:\mbf]$.

The bijection $\psi_\alpha$ sends $\sigma G_\alpha$ to $\sigma(\alpha)$ for every $\sigma$ in $G$.

Every $\sigma\in G$ sends $\alpha$, a root of $m_{\alpha, \mbf}(x)$, to a root of $m_{\alpha, \mbf}(x)$.

If $\sigma G_\alpha=\tau G_\alpha$ for $\sigma,\tau\in G$, then $\tau^{-1}\sigma\in G_\alpha$ which implies that $\tau^{-1}\sigma(\alpha)=\alpha$. Hence $\sigma(\alpha)=\tau(\alpha)$ and $\psi_\alpha$ is well-defined.

If $\sigma(\alpha)=\tau(\alpha)$, then $\tau^{-1}\sigma(\alpha)=\alpha$ which means that $\tau^{-1}\sigma\in G_\alpha$ and $\sigma G_\alpha=\tau G_\alpha$. So $\psi_\alpha$ is injective.

To prove the surjectivity of $\psi_\alpha$, we need the following lemma.

\begin{lemma}~\label{lem:transitivity}
Suppose $\mbk$ is a splitting field of a separable polynomial $f$ in $\pf$, and $\alpha,\beta$ in $\mbk$ are roots of an irreducible factor $p$ of $f$. Then there exists  $\sigma$ in $\txau(\mbk/\mbf)$  such that $\sigma(\alpha)=\beta$.

\end{lemma}

\begin{proof}
  %We do induction on $[\mbk:\mbf]$.

%If $[\mbk:\mbf]=1$, then $f=x-a$ for some $a$ in $\mbf$ and $\sigma$ is the identity map.

%Assume that when  $\mbk$ is a splitting field of a polynomial $f$ in $\pf$ with $[\mbk:\mbf]<n$, and $\alpha,\beta$ are roots of an irreducible factor $p$ of $f$. Then there exists  $\sigma$ in $\txau(\mbk/\mbf)$  such that $\sigma(\alpha)=\beta$.

%Now suppose that $\mbk$ is a splitting field of a polynomial $f$ in $\pf$ with $[\mbk:\mbf]=n$, and $\alpha,\beta$ are roots of an irreducible factor $p$ of $f$.

%\end{proof}
Firstly there exists an isomorphism $\varphi$ between $\mbf(\alpha)$ and $\mbf(\beta)$ sending $\alpha$ to $\beta$ and fixing all elements in $\mbk$. Since $f$ is in  $\mbf(\alpha)[x]$ and $\mbf(\beta)[x]$, and $\mbk$ is a splitting field of $f$, by Lemma~\ref{lem:unispf},  there is  an isomorphism $\sigma: \mbk\to \mbk$ such that $\sigma(a)=\varphi(a)$ for all $a$ in $\mbf(\alpha)$. It follows that $\sigma$ is in $\txau(\mbk/\mbf)$  and $\sigma(\alpha)=\beta$.
\end{proof}

By Lemma~\ref{lem:transitivity}, for every $\beta\in X_\alpha$, one can find $\sigma\in G$ such that $\sigma(\alpha)=\beta$.

Hence $\psi_\alpha$ is surjective.

(2)$\Rightarrow$(3).

The goal is to prove that $[\mbk: \mbk^\txau(\mbk/\mbf)]\geq |\txau(\mbk/\mbf)|$. If this is done, noticing that  $\mbf$ is a subfield of $\mbk^{\txau(\mbk/\mbf)}$,  then
$$[\mbk:\mbf]\geq [\mbk: \mbk^{\txau(\mbk/\mbf)}]\geq |\txau(\mbk/\mbf)|=[\mbk:\mbf],$$
which implies that $\mbk^{\txau(\mbk/\mbf)}=\mbf$.

%It's clear that $\mbf$ is a subfield of $\mbk^{\txau(\mbk/\mbf)}$. By Theorem~\ref{thm:GaloisGroupSize}, one see that
%$[\mbk:\mbk^{\txau(\mbk/\mbf)}]\geq |\txau(\mbk/\mbf)|$. At the same time, by (2), one has $|\txau(\mbk/\mbf)|=[\mbk:\mbf]$. Hence $\mbk^{\txau(\mbk/\mbf)}=\mbf$.

The inequality $[\mbk: \mbk^{\txau(\mbk/\mbf)}]\geq |\txau(\mbk/\mbf)|$ is a special case of the following theorem~\cite[Theorem 13]{Artin1944}.

\begin{theorem}~\label{thm:GaloisGroupSize}
Let $\{\sigma_1,\sigma_2,\cdots,\sigma_n\}$ be a subset of ${\rm} Aut(\mbk)$ and $\mbe=\{a\in \mbk|\,\sigma_i(a)=a\, {\rm for \,all}\, 1\leq i\leq n\}$ be its fixed field, then $[\mbk:\mbe]\geq n$.
\end{theorem}

We first need a preliminary result concerning group characters, which is in a more general context than we need.

\begin{definition}
A group homomorphism from a group $G$ to $\mbf^{\times}$~(the multiplication group consisting of nonzero elements in a field $\mbf$) is called a \textbf{character}.
\index{character}
Characters $\sigma_1,\cdots, \sigma_n$ are called \textbf{linearly independent} over $\mbf$ if $\lambda_i's$ in $\mbf$ satisfying that $\sum_{i=1}^{n}\lambda_i\sigma_i(x)=0$ for all $x\in G$ are all zeroes, otherwise call $\sigma_1,\cdots, \sigma_n$ \textbf{linearly dependent} over $\mbf$.~\index{linearly independent characters}
\end{definition}

\begin{theorem}~\label{thm: LinearIndependence}
A finite set of (distinct) characters $\{\sigma_1,\sigma_2,\cdots,\sigma_n\}$ is linearly independent.
\end{theorem}
\begin{proof}
It's clear that $\{\sigma_1\}$ is linearly independent since $\sigma_1$ is nonzero.

Suppose any set with less than $n$ distinct characters is linearly independent.

Now assume that $\{\sigma_1,\sigma_2,\cdots,\sigma_n\}$ is linearly dependent. Then there exist nonzero $a_1,a_2,\cdots, a_n$ such that
$\sum_{i=1}^{n}a_i\sigma_i(x)=0$ for all $x\in G$. Note that every $a_i$ must be nonzero otherwise it leads to a contradiction to the assumption. So we have $a_1a_n^{-1}\sigma_1(x)+a_2a_n^{-1}\sigma_2(x)+\cdots+a_{n-1}a_n^{-1}\sigma_{n-1}(x)+\sigma_n(x)=0$ for every $x$ in $G$. Since $\sigma_1\neq\sigma_n$, one has $\sigma_1(y)\neq\sigma_n(y)$ for some $y$ in $G$. Then $a_1a_n^{-1}\sigma_1(yx)+a_2a_n^{-1}\sigma_2(yx)+\cdots+a_{n-1}a_n^{-1}\sigma_{n-1}(yx)+\sigma_n(yx)=0$ for every $x$ in $G$. It follows that $a_1a_n^{-1}\sigma_n(a^{-1})\sigma_1(y)\sigma_1(x)+a_2a_n^{-1}\sigma_n(y^{-1})\sigma_2(y)\sigma_2(x)+\cdots+a_{n-1}a_n^{-1}\sigma_n(y^{-1})\sigma_{n-1}(y)\sigma_{n-1}(x)+\sigma_n(x)=0$ for every $x$ in $G$. Hence
\begin{align*}
&a_1a_n^{-1}(\sigma_1(y)\sigma_n(y^{-1})-1)\sigma_1(x)+(a_2a_n^{-1}\sigma_n(y^{-1})\sigma_2(y)-a_2a_n^{-1})\sigma_2(x) \\
&+\cdots+(a_{n-1}a_n^{-1}\sigma_n(y^{-1})\sigma_{n-1}(y)-a_{n-1}a_n^{-1})\sigma_{n-1}(x)=0
\end{align*}
for every $x$ in $G$. But $a_1a_n^{-1}(\sigma_1(y)\sigma_n(y^{-1})-1)\neq 0$, which is a contradiction to that $\{\sigma_1,\cdots,\sigma_{n-1}\}$ is linearly independent over $\mbf$.
\end{proof}

\begin{proof}~[Proof of Theorem~\ref{thm:GaloisGroupSize}]

 Suppose $r=[\mbk:\mbe]<n$. Let $\{v_1,v_2,\cdots, v_r\}$ be a basis of $\mbk$ over $\mbe$. Consider the homogeneous linear equation system

 \begin{equation*}
  \begin{aligned}
    &x_1\sigma_1(v_1)+x_2\sigma_2(v_1)+\cdots+x_n\sigma_n(v_1)=0 \\
    &x_1\sigma_1(v_2)+x_2\sigma_2(v_2)+\cdots+x_n\sigma_n(v_2)=0 \\
    &\hdots\qquad\qquad \hdots\qquad\qquad\hdots\\
    &x_1\sigma_1(v_r)+x_2\sigma_2(v_r)+\cdots+x_n\sigma_n(v_r)=0.
  \end{aligned}
  \end{equation*}
  Note that this system has more unknowns than equations, hence it has a nonzero solution, say, $x_1, x_2,\cdots, x_n$ in $\mbk$. Every $a$ in $\mbk$ is a linear combination of $v_1,v_2,\cdots, v_n$, so $x_1\sigma_1(a)+x_2\sigma_2(a)+\cdots+x_n\sigma_n(a)=0$. This means that $\{\sigma_1,\sigma_2,\cdots,\sigma_n\}$ is linearly independent  over $\mbk$.  This is a contradiction to Theorem.~\ref{thm: LinearIndependence} 
\end{proof}

(3)$\Rightarrow$(4).

Suppose $\alpha$ in $\mbk$ is a root of  an irreducible polynomial $p(x)$ in $\pf$. Let $G=\txau(\mbk/\mbf)=\{\sigma_1,\sigma_2,\cdots,\sigma_n\}$. Suppose $\{\alpha,\alpha_1,\alpha_2,\cdots,\alpha_r\}$ are all distinct elements of $\{\sigma_1(\alpha),\sigma_2(\alpha),\cdots,\sigma_n(\alpha)\}$.

Consider $q(x)=(x-\alpha)(x-\alpha_1)\cdots(x-\alpha_r)$. Note that $q$ is fixed by $\sigma_i$ for all $1\leq i\leq n$ since each $\sigma_i$ is a permutation of $\{\alpha,\alpha_1,\alpha_2,\cdots,\alpha_r\}$. Hence all coefficients of $q$ are fixed by $G$. By (3), $\mbk^G=\mbf$. So all coefficients of $q$ are in $\mbf$, that is, $q$ is in $\pf$. Note that $q(\alpha)=0$ and $\deg(q)\leq \deg(p)$. So we have $q=p$.

Therefore $p$ is separable and all roots of $p$  are in $\mbk$.

(4)$\Rightarrow$(1).

Let $\{v_1, v_2,\cdots, v_n\}$ be a basis of $\mbk$ over $\mbf$. %Since $\mbk$ is a finite extension, every element in $\mbk$ is algebraic over $\mbf$.

Hence one can find monic separable irreducible polynomials $p_i(x)$ in $\pf$ such that $p_i(v_i)=0$ for every $1\leq i\leq n$. Let $f(x)$ be the product of all distinct $p_i's$. Hence $f$ is separable since each $p_i$ is separable, and $\mbk$ is a splitting field of $f(x)$.

\end{proof}

\begin{remark}
Lemma~\ref{lem:transitivity} says that the action of $\txau(\mbk/\mbf)$ on the roots of an irreducible polynomial $p(x)$ in $\pf$ which splits over $\mbk$ is transitive, that is, the orbit of any root of $p$ under $\txau(\mbk/\mbf)$ exhausts all roots of $p$.
\end{remark}

\begin{definition}
A finite extension $\mbk$ of $\mbf$ is called a \textbf{Galois extension} \index{Galois extension} if one of the four conditions in Theorem~\ref{thm:GaloisCharacterizations} holds. In this case, denote $\txau(\mbk/\mbf)$ by ${\rm Gal}(\mbk/\mbf)$.
\end{definition}

Consider the field extension $\mbq(\sqrt{2})/\mbq$. Every $\sigma$ in $\txau(\mbq(\sqrt{2}))$ satisfies that $\sigma(1)=1$. Hence
$\txau(\mbq(\sqrt{2})/\mbq)=\txau(\mbq(\sqrt{2}))$. The minimal polynomial of $\sqrt{2}$ is $x^2-2$ whose roots are $\pm\sqrt{2}$.
$\txau(\mbq(\sqrt{2})/\mbq)=\{e,\sigma\}$ where $\sigma(a+b\sqrt{2})=a-b\sqrt{2}$ for all $a,b$ in $\mbq$. Then $|\txau(\mbq(\sqrt{2})/\mbq)|=2=[\mbq(\sqrt{2}):\mbq]$. Hence $\mbq(\sqrt{2})/\mbq$ is a Galois extension.

Consider the field extension $\mbq(\sqrt[3]{2})/\mbq$. Also $\txau(\mbq(\sqrt[3]{2})/\mbq)=\txau(\mbq(\sqrt[3]{2}))$. The minimal polynomial of $\sqrt[3]{2}$ is $x^3-2$ whose roots are $\sqrt[3]{2}$, $\sqrt[3]{2}(-\frac{1}{2}+\frac{\sqrt{3}}{2}i)$ and $\sqrt[3]{2}(-\frac{1}{2}-\frac{\sqrt{3}}{2}i)$. For every $\sigma$ in $\txau(\mbq(\sqrt[3]{2})/\mbq)$, $\sigma(\sqrt[3]{2})$ cannot be $\sqrt[3]{2}(-\frac{1}{2}+\frac{\sqrt{3}}{2}i)$ or $\sqrt[3]{2}(-\frac{1}{2}-\frac{\sqrt{3}}{2}i)$ otherwise $\sqrt{3}i$ belongs to $\mbq(\sqrt[3]{2})\subseteq\mbr$ which is a contradiction. Hence $\sigma(\sqrt[3]{2})=\sqrt[3]{2}$.
Therefore  $\txau(\mbq(\sqrt[3]{2})/\mbq)$ contains only the identity map which implies that $|\txau(\mbq(\sqrt[3]{2})/\mbq)|=1<[\mbq(\sqrt[3]{2}):\mbq]=3$. So $\mbq(\sqrt[3]{2})/\mbq$ is not a Galois extension.

\section{The fundamental theorem of Galois theory}

In this section, we prove the fundamental theorem of Galois theory.

Let $\mbk$ be a field and $H$ is a subgroup of $\txau(\mbk)$. Define
$$\mbk^H=\{a\in\mbk\,|\,\sigma(a)=a\, {\rm for\,all}\,\sigma\,{\rm in}\, H\}.$$

\begin{lemma}
$\mbk^H$ is a subfield of $\mbk$.
\end{lemma}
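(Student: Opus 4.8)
The plan is to verify directly that $\mbk^H$ is closed under the field operations, using only that each $\sigma\in H$ is a ring automorphism of $\mbk$. First I would note that $\mbk^H$ is nonempty: every $\sigma\in H$ satisfies $\sigma(0)=0$ and $\sigma(1)=1$, so $0,1\in\mbk^H$. In particular $\mbk^H$ is an intermediate set between the prime field and $\mbk$.

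Next I would check the subring axioms. Take $a,b\in\mbk^H$ and any $\sigma\in H$. Since $\sigma$ is a ring homomorphism, $\sigma(a-b)=\sigma(a)-\sigma(b)=a-b$ and $\sigma(ab)=\sigma(a)\sigma(b)=ab$, so $a-b$ and $ab$ lie in $\mbk^H$. Together with $0,1\in\mbk^H$ this shows $\mbk^H$ is a unital subring of $\mbk$.

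Finally I would handle inverses. Let $a\in\mbk^H$ with $a\neq 0$, and let $\sigma\in H$. Applying $\sigma$ to $aa^{-1}=1$ gives $\sigma(a)\sigma(a^{-1})=1$, and since $\sigma(a)=a$ we get $a\,\sigma(a^{-1})=1$, whence $\sigma(a^{-1})=a^{-1}$. Thus $a^{-1}\in\mbk^H$, so every nonzero element of $\mbk^H$ is invertible in $\mbk^H$, and $\mbk^H$ is a subfield of $\mbk$.

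There is no real obstacle here; the statement is a routine verification, the only point worth stating explicitly being that a ring automorphism automatically commutes with taking multiplicative inverses, which is what makes $\mbk^H$ closed under inversion rather than merely a subring.
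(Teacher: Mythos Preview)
Your proof is correct and follows essentially the same direct verification as the paper's: the paper checks closure under $a-b$ and under $ab^{-1}$ (combining your product and inverse steps into one), while you check subtraction, multiplication, and inversion separately, but the substance is identical.
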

\begin{proof}
  If $a,b$ in $\mbk^H$, then $\sigma(a-b)=\sigma(a)-\sigma(b)=a-b$ for every $\sigma$ in $H$ which means that $a-b$ is in $\mbk^H$. %Hence $\mbk^H$ is an abelian group under addition.

  If $a,b$ in $\mbk^H$ and $b\neq 0$, then $\sigma(ab^{-1})=\sigma(a)\sigma(b^{-1})=\sigma(a)\sigma(b)^{-1}=ab^{-1}$ for all $\sigma$ in $H$ which means that $ab^{-1}$ is in $\mbk^H$.

  We conclude from the above that $\mbk^H$ is a subfield of $\mbk$.
\end{proof}

\begin{example}\

  \begin{enumerate}
    \item   Recall that $\txau(\mbq(\sqrt{2})/\mbq)=\{e,\sigma\}$ where $\sigma$ is given by $\sigma(a+b\sqrt{2})=a-b\sqrt{2}$ for all $a,b$ in $\mbq$. If $\sigma(a+b\sqrt{2})=a+b\sqrt{2}$, then $b=0$. Hence $\mbq(\sqrt{2})^{\txau(\mbq(\sqrt{2})/\mbq)}=\mbq$.
    \item $\mbq(\sqrt[3]{2})^{\txau(\mbq(\sqrt[3]{2})/\mbq)}=\mbq(\sqrt[3]{2})^{\{e\}}=\mbq(\sqrt[3]{2})$.
  \end{enumerate}
\end{example}

\begin{theorem}[Fundamental Theorem of Galois Theory]
~\label{thm:FundamentalGalois}

Suppose $\mbk$ is a Galois extension of $\mbf$.

The map $\Phi: \mathcal{IF}=\{{\rm subfields\, of}\, \mbk\, {\rm containing}\, \mbf\}\to \mathcal{SG}=\{{\rm subgroups\, of} \,{\rm Gal}(\mbk/\mbf)\}$ given by $\Phi(\mbe)=\txau(\mbk/\mbe)$ is a bijection whose inverse map $\Psi: \mathcal{SG}\to \mathcal{IF}$ is given by $\Psi(H)=\mbk^H$. Furthermore  the following hold.
\begin{enumerate}
  \item $\mbk$ is always a Galois extension of $\mbe$, and $[\mbe:\mbf]=|{\rm Gal}(\mbk/\mbf):{\rm Gal}(\mbk/\mbe)|$;
  \item $\mbe$ is a Galois extension of $\mbf$ iff ${\rm Gal}(\mbk/\mbe)$ is a normal subgroup of ${\rm Gal}(\mbe/\mbf)$ and ${\rm Gal}(\mbe/\mbf)\cong {\rm Gal}(\mbk/\mbf)/{\rm Gal}(\mbk/\mbe)$;
  \item For  $\mbe_1$ and $\mbe_2$ in $\mathcal{IF}$, we have $\mbe_1\subseteq \mbe_2$ iff ${\rm Gal}(\mbk/\mbe_1)\supseteq {\rm Gal}(\mbk/\mbe_2)$. Moreover ${\rm Gal}(\mbk/\mbe_1\cap \mbe_2)= {\rm Gal}(\mbk/\mbe_1)\vee {\rm Gal}(\mbk/\mbe_2)$~\footnote{For subgroups $H_1$ and $H_2$ of a group $G$, the notation $H_1\vee H_2$ stands for the smallest subgroup of $G$ containing $H_1$ and $H_2$.} and ${\rm Gal}(\mbk/\mbe_1\mbe_2)={\rm Gal}(\mbk/\mbe_1)\cap {\rm Gal}(\mbk/\mbe_2)$.
  \end{enumerate}
\end{theorem}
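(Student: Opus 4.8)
The plan is to first prove that $\Phi$ and $\Psi$ are mutually inverse bijections, and then read statements (1)--(3) off this correspondence together with the characterizations in Theorem~\ref{thm:GaloisCharacterizations}. The key preliminary observation is that $\mbk$ is a Galois extension of \emph{every} intermediate field $\mbe\in\mathcal{IF}$: by characterization~(1), $\mbk$ is the splitting field over $\mbf$ of some separable $f$ in $\pf$, and regarded as an element of $\mbe[x]$ this $f$ is still separable with splitting field $\mbk$, so $\mbk/\mbe$ is Galois. Applying characterization~(3) to $\mbk/\mbe$ gives $\Psi(\Phi(\mbe))=\mbk^{\txau(\mbk/\mbe)}=\mbe$, so $\Psi\circ\Phi={\rm id}_{\mathcal{IF}}$; in particular $\Phi$ is injective. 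Both $\Phi$ and $\Psi$ reverse inclusions (an automorphism fixing a larger field fixes a smaller one; a larger group has a smaller fixed field), so once bijectivity is established the correspondence is automatically inclusion-reversing.

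The heart of the matter is to show $\Phi(\Psi(H))=\txau(\mbk/\mbk^H)=H$ for every subgroup $H\leq{\rm Gal}(\mbk/\mbf)$, which gives surjectivity of $\Phi$ and hence bijectivity. The inclusion $H\subseteq\txau(\mbk/\mbk^H)$ is immediate, and since $\mbk/\mbk^H$ is Galois, characterization~(2) gives $|\txau(\mbk/\mbk^H)|=[\mbk:\mbk^H]$; so it suffices to prove $[\mbk:\mbk^H]\leq|H|$, which I expect to be the one genuinely nontrivial input, being the reverse of the inequality in Theorem~\ref{thm:GaloisGroupSize}. I would prove it by the linear-algebra argument dual to the proof of that theorem: assuming $[\mbk:\mbk^H]\geq|H|+1$, pick $w_1,\dots,w_{|H|+1}$ in $\mbk$ linearly independent over $\mbk^H$; the homogeneous system $\sum_j\sigma(w_j)x_j=0$, one equation for each $\sigma\in H$, has more unknowns than equations, hence a nonzero solution in $\mbk$; taking one of smallest support and scaling a coordinate to $1$, then applying an arbitrary $\sigma_0\in H$ (which permutes the equations because $H$ is a group) and subtracting, one obtains either a strictly shorter nonzero solution or, if all $x_j\in\mbk^H$, a relation among the $w_j$ over $\mbk^H$ from the equation for $\sigma={\rm id}$ --- a contradiction either way. (Alternatively, if the primitive element theorem is available, write $\mbk=\mbk^H(\gamma)$ and note $\prod_{\sigma\in H}(x-\sigma(\gamma))$ lies in $\mbk^H[x]$ and kills $\gamma$, so $[\mbk:\mbk^H]\leq|H|$ at once.)

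Granting the correspondence, statement~(1) is bookkeeping: $\mbk/\mbe$ is Galois by the observation above, and from $[\mbk:\mbf]=|{\rm Gal}(\mbk/\mbf)|$, $[\mbk:\mbe]=|{\rm Gal}(\mbk/\mbe)|$ (characterization~(2)) and the tower law $[\mbk:\mbf]=[\mbk:\mbe][\mbe:\mbf]$ (Theorem~\ref{thm: fielddim}) one reads off $[\mbe:\mbf]=|{\rm Gal}(\mbk/\mbf):{\rm Gal}(\mbk/\mbe)|$. For statement~(3), the equivalence $\mbe_1\subseteq\mbe_2\Leftrightarrow{\rm Gal}(\mbk/\mbe_1)\supseteq{\rm Gal}(\mbk/\mbe_2)$ is exactly the inclusion-reversing property (both directions using $\Psi\circ\Phi={\rm id}$); the identity ${\rm Gal}(\mbk/\mbe_1\mbe_2)={\rm Gal}(\mbk/\mbe_1)\cap{\rm Gal}(\mbk/\mbe_2)$ holds because an automorphism fixes the compositum pointwise iff it fixes $\mbe_1$ and $\mbe_2$ separately; and ${\rm Gal}(\mbk/\mbe_1\cap\mbe_2)={\rm Gal}(\mbk/\mbe_1)\vee{\rm Gal}(\mbk/\mbe_2)$ follows by applying $\Phi$ to the identity $\mbk^{\,{\rm Gal}(\mbk/\mbe_1)\vee{\rm Gal}(\mbk/\mbe_2)}=\mbe_1\cap\mbe_2$, since an element is fixed by the join of two subgroups iff it is fixed by each of them.

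Finally, statement~(2). If $\mbe/\mbf$ is Galois, then $\mbe$ is generated over $\mbf$ by the roots (all lying in $\mbe$) of a polynomial in $\pf$, so every $\sigma\in{\rm Gal}(\mbk/\mbf)$ satisfies $\sigma(\mbe)=\mbe$; restriction therefore defines a homomorphism ${\rm Gal}(\mbk/\mbf)\to{\rm Gal}(\mbe/\mbf)$ with kernel $\txau(\mbk/\mbe)={\rm Gal}(\mbk/\mbe)$, which is thus normal, and the homomorphism is surjective because every $\mbf$-automorphism of $\mbe$ extends to $\mbk$ by the isomorphism-extension Lemma~\ref{lem:unispf}; the first isomorphism theorem then gives ${\rm Gal}(\mbe/\mbf)\cong{\rm Gal}(\mbk/\mbf)/{\rm Gal}(\mbk/\mbe)$. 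Conversely, if ${\rm Gal}(\mbk/\mbe)\unlhd{\rm Gal}(\mbk/\mbf)$, then $\mbe/\mbf$ is separable (each element of $\mbe\subseteq\mbk$ is a root of a separable polynomial over $\mbf$) and normal: given an irreducible $p$ in $\pf$ with a root $\alpha\in\mbe$, it splits in $\mbk$, and for any other root $\beta\in\mbk$ the transitivity remark following Theorem~\ref{thm:GaloisCharacterizations} yields $\sigma\in{\rm Gal}(\mbk/\mbf)$ with $\sigma(\alpha)=\beta$; for $\tau\in{\rm Gal}(\mbk/\mbe)$ and $x\in\mbe$ one has $\tau\sigma(x)=\sigma(\sigma^{-1}\tau\sigma)(x)=\sigma(x)$ by normality, so $\sigma(\mbe)\subseteq\mbk^{{\rm Gal}(\mbk/\mbe)}=\mbe$, whence $\beta\in\mbe$ and $p$ splits over $\mbe$. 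Thus $\mbe/\mbf$ is normal and separable, hence Galois by characterization~(4), and the isomorphism follows as in the first direction.
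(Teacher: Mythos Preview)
Your proposal is correct and follows essentially the same route as the paper: the key inequality $[\mbk:\mbk^H]\leq|H|$ is established there as a separate preliminary theorem (Theorem~\ref{Thm:Intermediate}) via exactly the minimal-support linear-algebra argument you sketch, and your treatment of the bijection and of parts (1) and (3) matches the paper's almost verbatim. The only cosmetic difference is in the converse direction of (2): the paper proves the claim $\sigma(\mbe)=\mbk^{\sigma\,{\rm Gal}(\mbk/\mbe)\,\sigma^{-1}}$ explicitly and then concludes via characterization~(2) by counting, whereas you do the same computation inline and conclude via characterization~(4); the underlying idea is identical.
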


To prove Theorem~\ref{thm:FundamentalGalois}, we need some preliminaries.

\begin{theorem}~\label{Thm:Intermediate}
Suppose $G$ is a finite subgroup of $\txau(\mbk)$. Then $[\mbk: \mbk^G]=|G|$.
\end{theorem}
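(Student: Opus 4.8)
## Proof Proposal

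The plan is to prove the two inequalities $[\mbk : \mbk^G] \le |G|$ and $[\mbk : \mbk^G] \ge |G|$ separately. The second inequality is immediate: every element of $G$ is an automorphism of $\mbk$ fixing $\mbk^G$ pointwise, so $G$ is a finite subset of $\txau(\mbk)$ whose fixed field contains $\mbk^G$ (in fact equals it, but we only need containment here), and Theorem~\ref{thm:GaloisGroupSize} gives $[\mbk : \mbk^G] \ge |G|$ directly. So the real work is the upper bound $[\mbk : \mbk^G] \le |G|$.

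For the upper bound, write $n = |G|$ and suppose for contradiction that $\mbk$ contains $n+1$ elements $v_1, \dots, v_{n+1}$ that are linearly independent over $\mbk^G$. List the elements of $G$ as $\sigma_1 = e, \sigma_2, \dots, \sigma_n$ and consider the homogeneous linear system over $\mbk$ in unknowns $x_1, \dots, x_{n+1}$ whose $i$-th equation is $\sum_{j=1}^{n+1} \sigma_i(v_j)\, x_j = 0$. This has $n$ equations and $n+1$ unknowns, hence a nonzero solution in $\mbk$. Among all nonzero solutions, choose one with the fewest nonzero coordinates; after reindexing the $v_j$'s, say it is $(x_1, \dots, x_r, 0, \dots, 0)$ with all $x_1, \dots, x_r \ne 0$, and by scaling we may take $x_r = 1$. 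The key observation is that not all $x_j$ lie in $\mbk^G$: if they did, then the equation coming from $\sigma_1 = e$, namely $\sum_j v_j x_j = 0$, would contradict the linear independence of the $v_j$ over $\mbk^G$. So some $x_s \notin \mbk^G$, meaning there is $\tau \in G$ with $\tau(x_s) \ne x_s$.

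Now apply $\tau$ to each equation $\sum_{j=1}^{r} \sigma_i(v_j) x_j = 0$. Since $\tau \sigma_i$ runs over all of $G$ as $\sigma_i$ does, we obtain $\sum_{j=1}^{r} \sigma_i(v_j)\, \tau(x_j) = 0$ for every $i$ (after relabeling the rows). Subtracting this from the original system gives $\sum_{j=1}^{r} \sigma_i(v_j)\,(x_j - \tau(x_j)) = 0$ for all $i$, i.e. $(x_1 - \tau(x_1), \dots, x_r - \tau(x_r), 0, \dots)$ is also a solution. Its $r$-th coordinate is $x_r - \tau(x_r) = 1 - 1 = 0$ while its $s$-th coordinate is $x_s - \tau(x_s) \ne 0$, so it is a nonzero solution with strictly fewer nonzero coordinates than $r$ — contradicting minimality. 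Hence no such $n+1$ independent vectors exist and $[\mbk : \mbk^G] \le n = |G|$. Combined with the reverse inequality, $[\mbk : \mbk^G] = |G|$.

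I expect the main obstacle to be bookkeeping in the "apply $\tau$ and subtract" step: one must argue carefully that the new system has the same coefficient structure (the rows are just permuted, since $\{\tau\sigma_1, \dots, \tau\sigma_n\} = \{\sigma_1, \dots, \sigma_n\}$) so that subtraction is legitimate, and that the minimality of $r$ is genuinely contradicted. Everything else — the dimension count producing a nonzero solution, and invoking Theorem~\ref{thm:GaloisGroupSize} for the lower bound — is routine.
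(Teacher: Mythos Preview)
Your proof is correct and follows essentially the same approach as the paper: both invoke Theorem~\ref{thm:GaloisGroupSize} for the lower bound and prove the upper bound by the classical minimal-solution argument (choose a nonzero solution of the $n \times (n+1)$ homogeneous system with fewest nonzero entries, normalize one entry to $1$, apply a suitable $\tau \in G$, and subtract to contradict minimality). The only cosmetic differences are notational---the paper writes the minimal solution as $(y_1,\dots,y_{m-1},1,0,\dots,0)$ and does not explicitly label $\sigma_1 = e$---but the logic is identical.
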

\begin{proof}

By Theorem~\ref{thm:GaloisGroupSize}, we have  $[\mbk: \mbk^G]\geq |G|$.

We prove $[\mbk: \mbk^G]\leq |G|$ by contradiction.

Denote the group $G$ by $\{\sigma_1,\sigma_2,\cdots, \sigma_n\}$ and assume that $[\mbk: \mbk^G]>n$.

Let $\{v_1, v_2, \cdots, v_{n+1}\}$ be a linearly independent set in $\mbk$ over $\mbk^G$.

Consider the homogeneous linear equation system
\begin{equation}~\label{eq:1}
  \begin{aligned}
    &x_1\sigma_1(v_1)+x_2\sigma_1(v_2)+\cdots+x_{n+1}\sigma_1(v_{n+1})=0 \\
    &x_1\sigma_2(v_1)+x_2\sigma_2(v_2)+\cdots+x_{n+1}\sigma_2(v_{n+1})=0 \\
    &\hdots\qquad\qquad \hdots\qquad\qquad\hdots\\
    &x_1\sigma_n(v_1)+x_2\sigma_n(v_2)+\cdots+x_{n+1}\sigma_n(v_{n+1})=0.
  \end{aligned}
  \end{equation}
This system has more unknowns than equations, hence has a nonzero solution in $\mbk$. Let $(y_1, y_2,\cdots,y_{n+1})$ be a solution with the smallest number of nonzeros. Without loss of generality, one may assume that $(y_1, y_2, \cdots, y_{m-1},1, 0,\cdots,0)$ is such a solution. clearly $m\geq 2$. Also
\begin{equation}~\label{s1}
  \begin{aligned}
    &y_1\sigma_1(v_1)+y_2\sigma_1(v_2)+\cdots+y_{m-1}\sigma_1(v_{m-1})+\sigma_1(v_m)=0 \\
    &y_1\sigma_2(v_1)+y_2\sigma_2(v_2)+\cdots+y_{m-1}\sigma_2(v_{m-1})+\sigma_2(v_m)=0 \\
    &\hdots\qquad\qquad \hdots\qquad\qquad\hdots\\
    &y_1\sigma_n(v_1)+y_2\sigma_n(v_2)+\cdots+y_{m-1}\sigma_n(v_{m-1})+\sigma_n(v_m)=0.
  \end{aligned}
  \end{equation}

Not every $y_j$ for $1\leq j\leq m$ is in  $\mbk^G$ otherwise a contradiction to linear independence of $\{v_1, v_2,\cdots, v_m\}$. Assume that $y_1\notin \mbk^G$ for convenience. Then there exists $\sigma_{k_0}$ such that $\sigma_{k_0}(y_1)\neq y_1$.

Let $\sigma_{k_0}$ act on both sides of ~\ref{s1}. Then we get
  \begin{equation}~\label{s2}
  \begin{aligned}
    &\sigma_{k_0}(y_1)\sigma_{k_0}\sigma_1(v_1)+\sigma_{k_0}(y_2)\sigma_{k_0}\sigma_1(v_2)+\cdots+\sigma_{k_0}(y_{m-1})\sigma_{k_0}\sigma_1(v_{m-1})+\sigma_{k_0}\sigma_1(v_m)=0 \\
    &\sigma_{k_0}(y_1)\sigma_{k_0}\sigma_2(v_1)+\sigma_{k_0}(y_2)\sigma_{k_0}\sigma_2(v_2)+\cdots+\sigma_{k_0}(y_{m-1})\sigma_{k_0}\sigma_2(v_{m-1})+\sigma_{k_0}\sigma_2(v_m)=0 \\
    &\hdots\qquad\qquad \hdots\qquad\qquad\hdots\\
    &\sigma_{k_0}(y_1)\sigma_{k_0}\sigma_n(v_1)+\sigma_{k_0}(y_2)\sigma_{k_0}\sigma_n(v_2)+\cdots+\sigma_{k_0}(y_{m-1})\sigma_{k_0}\sigma_n(v_{m-1})+\sigma_{k_0}\sigma_n(v_m)=0.
  \end{aligned}
  \end{equation}

Since $G$ is a group, one has $\{\sigma_{k_0}\sigma_1,\sigma_{k_0}\sigma_2,\cdots, \sigma_{k_0}\sigma_n\}=\{\sigma_1,\sigma_2,\cdots, \sigma_n\}$.

Hence $\{\sigma_{k_0}(y_1), \sigma_{k_0}(y_2), \cdots, \sigma_{k_0}(y_{m-1}),1, 0,\cdots,0)$ is also a solution of  the linear equation system~\ref{eq:1}.
It follows from $y_1-\sigma_{k_0}(y_1)\neq 0$ that $\{\sigma_{k_0}(y_1)-y_1, \sigma_{k_0}(y_2)-y_2, \cdots, \sigma_{k_0}(y_{m-1})-y_{m-1},0, 0,\cdots,0)$ is a  nonzero solution of ~\ref{eq:1}, which only has at most $m-1$ many nonzeros.

This is a contradiction to that  a solution of ~\ref{eq:1} has at least $m$ nonzeros.
\end{proof}
\begin{comment}

\begin{corollary}~\label{cor:KG}
Suppose $G$ is a finite subgroup of $\txau(\mbk)$. Then $\txau(\mbk/\mbk^G)=G$.
\end{corollary}
\begin{proof}
It's clear that $G\subseteq \txau(\mbk/\mbk^G)$.

On the other hand, by Theorem~\ref{thm:GaloisGroupSize} and Theorem~\ref{Thm:Intermediate}, we have $|\txau(\mbk/\mbk^G)|\leq [\mbk:\mbk^G]=|G|$. Hence $\txau(\mbk/\mbk^G)=G$.
\end{proof}

\begin{corollary}~\label{cor:Inj}
Suppose $G_1$ and $G_2$ are finite subgroups of $\txau(\mbk)$. Then $G_1\neq G_2$ iff $\mbk^{G_1}\neq \mbk^{G_2}$.
\end{corollary}
\begin{proof}
Suppose $G_1\neq G_2$. By Corollary~\ref{cor:KG}, one has $\txau(\mbk/\mbk^{G_1})\neq \txau(\mbk/\mbk^{G_2})$. Hence $\mbk^{G_1}\neq \mbk^{G_2}$.

The converse is obvious.
\end{proof}

\end{comment}
\begin{proof}[Proof of Theorem~\ref{thm:FundamentalGalois}]
From Theorem~\ref{thm:GaloisCharacterizations}(1), $\mbk$ is the splitting field of a separable polynomial $f(x)$ in $\pf$. For every subfield $\mbe$ of $\mbk$ containing $\mbf$,  $\mbk$ is also the splitting field of $f(x)$ in $\mbe[x]$. Hence $\mbk$ is a Galois extension of $\mbe$.

For every $\mbe$ in $\mathcal{IF}$, we have that $\Psi\Phi(\mbe)=\Psi(\txau(\mbk/\mbe))=\mbk^{\txau(\mbk/\mbe)}=\mbe$ since $\mbk/\mbe$ is a Galois extension.

For every $H$ in $\mathcal{SG}$, we have that $\Phi\Psi(H)=\Phi(\mbk^H)=\txau(\mbk/\mbk^H)$. Clearly $H$ is a subgroup of  $\txau(\mbk/\mbk^H)$. Moreover since $\mbk/\mbk^H$ is a Galois extension, we obtain that $|\txau(\mbk/\mbk^H)|=[\mbk:\mbk^H]$. It follows from Theorem~\ref{Thm:Intermediate} that $[\mbk:\mbk^H]=|H|$. So $|\txau(\mbk/\mbk^H)|=|H|$ which implies that $H=\txau(\mbk/\mbk^H)$. 

In summarize, we  have proved  that $\Psi\Phi$ is the identity map on $\mathcal{IF}$ and $\Phi\Psi$ is the identity map on $\mathcal{SG}$.

%Note that by Theorem~\ref{thm:GaloisCharacterizations}(3), we have $\mbk^{\txau(\mbk/\mbe)}=\mbe$  for any subfield $\mbe$ of $\mbk$ containing $\mbf$ and by Corollary~\ref{cor:KG}, we have $\txau(\mbk/\mbk^H)=H$ for every subgroup $H$ of ${\rm Gal}(\mbk/\mbf)$. Hence the map between $\{{\rm subfields\, of}\, \mbk \,{\rm containing}\, \mbf\}$ and $\{{\rm subgroups\, of} \,{\rm Gal}(\mbk/\mbf)\}$ given by $\mbe\to \txau(\mbk/\mbe)$ is bijective with the inverse map  given by $H\to \mbk^H$ for every subgroup $H$ of ${\rm Gal}(\mbk/\mbf)$.

(1) We already show that  $\mbk$ is a Galois extension of $\mbe$. Moreover $[\mbk:\mbf]=[\mbk:\mbe][\mbe:\mbf]$. Note that $[\mbk:\mbf]=|{\rm Gal}(\mbk/\mbf)|$ and $[\mbk:\mbe]=|{\rm Gal}(\mbk/\mbe)|$. Hence $[\mbe:\mbf]=|{\rm Gal}(\mbk/\mbf):{\rm Gal}(\mbk/\mbe)|$.

(2) Suppose ${\rm Gal}(\mbk/\mbe)$ is a normal subgroup of ${\rm Gal}(\mbk/\mbf)$. One can define $\varphi: {\rm Gal}(\mbk/\mbf)/{\rm Gal}(\mbk/\mbe)\to\txau(\mbe/\mbf)$ by $\varphi(\sigma {\rm Gal}(\mbk/\mbe))=\sigma|_\mbe$ for every $\sigma$ in ${\rm Gal}(\mbk/\mbf)$.

To check that $\varphi$ is  well-defined, one need verify that $\sigma(\mbe)=\mbe$ for every $\sigma\in {\rm Gal}(\mbk/\mbf)$.

Note that $\mbk$ is a Galois extension of $\mbe$, so $\mbe=\mbk^{{\rm Gal}(\mbk/\mbe)}$ by Theorem~\ref{thm:GaloisCharacterizations}(3). Also we have
\begin{claim}
$\sigma(\mbe)=\mbk^{\sigma {\rm Gal}(\mbk/\mbe)\sigma^{-1}}$ for every $\sigma\in {\rm Gal}(\mbk/\mbf)$.
\end{claim}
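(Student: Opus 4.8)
$\sigma(\mbe) = \mbk^{\sigma\,{\rm Gal}(\mbk/\mbe)\,\sigma^{-1}}$ for every $\sigma \in {\rm Gal}(\mbk/\mbf)$.

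The plan is to prove the set equality by double inclusion, working directly from the definition of $\mbk^H$ as the fixed field and from the fact (already established in the proof of the Fundamental Theorem, via $\Psi\Phi = \mathrm{id}$) that $\mbe = \mbk^{{\rm Gal}(\mbk/\mbe)}$. Let me abbreviate $H = {\rm Gal}(\mbk/\mbe)$, so the target is $\sigma(\mbk^H) = \mbk^{\sigma H \sigma^{-1}}$.

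First I would prove $\sigma(\mbk^H) \subseteq \mbk^{\sigma H \sigma^{-1}}$. Take $a \in \mbk^H$, so $\tau(a) = a$ for all $\tau \in H$; I want to show $\sigma(a)$ is fixed by every element of $\sigma H \sigma^{-1}$. A typical such element is $\sigma\tau\sigma^{-1}$ with $\tau \in H$, and indeed $(\sigma\tau\sigma^{-1})(\sigma(a)) = \sigma\tau(a) = \sigma(a)$. So $\sigma(a) \in \mbk^{\sigma H \sigma^{-1}}$. For the reverse inclusion, I would run the same computation with $\sigma$ replaced by $\sigma^{-1}$ and $H$ replaced by $\sigma H \sigma^{-1}$: this gives $\sigma^{-1}\bigl(\mbk^{\sigma H \sigma^{-1}}\bigr) \subseteq \mbk^{\sigma^{-1}(\sigma H \sigma^{-1})\sigma} = \mbk^{H}$, and applying $\sigma$ to both sides yields $\mbk^{\sigma H \sigma^{-1}} \subseteq \sigma(\mbk^H)$. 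Combining the two inclusions gives the equality, and since $\mbe = \mbk^H$ this is exactly the Claim.

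Technical points to be careful about: $\sigma H \sigma^{-1}$ is genuinely a subgroup of ${\rm Gal}(\mbk/\mbf)$ (conjugate of a subgroup), so $\mbk^{\sigma H \sigma^{-1}}$ makes sense; and $\sigma$ restricted to $\mbk^H$ is injective with well-defined inverse $\sigma^{-1}$, which is what licenses "applying $\sigma$ to both sides" of an inclusion of subsets of $\mbk$. I do not expect any real obstacle here — this is the standard conjugation bookkeeping, and it is the lemma that will immediately give, via the bijection $\Phi$ of the Fundamental Theorem, that $\sigma(\mbe) = \mbe$ for all $\sigma \in {\rm Gal}(\mbk/\mbf)$ precisely when $H$ is normal, which is the real content of part (2) that this Claim feeds into.
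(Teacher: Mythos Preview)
Your proof is correct and follows essentially the same double-inclusion argument as the paper: the forward inclusion is identical, and for the reverse inclusion the paper takes $b\in\mbk^{\sigma H\sigma^{-1}}$ and directly computes $\tau(\sigma^{-1}(b))=\sigma^{-1}(b)$ to conclude $\sigma^{-1}(b)\in\mbk^H=\mbe$, which is just an unwound version of your substitution $\sigma\mapsto\sigma^{-1}$, $H\mapsto\sigma H\sigma^{-1}$. No meaningful difference in approach.
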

\begin{proof}
  For every $a$ in $\mbe$ and every $\tau\in {\rm Gal}(\mbk/\mbe)$, we have $\sigma\tau\sigma^{-1}(\sigma(a))=\sigma(a)$. Hence $\sigma(\mbe)\subseteq \mbk^{\sigma {\rm Gal}(\mbk/\mbe)\sigma^{-1}}$.

  On the other hand for every $b\in \mbk^{\sigma {\rm Gal}(\mbk/\mbe)\sigma^{-1}}$, one has $\sigma\tau\sigma^{-1}(b)=b$ for every $\tau\in {\rm Gal}(\mbk/\mbe)$. So $\sigma^{-1}(b)$ is in $\mbk^{{\rm Gal}(\mbk/\mbe)}=\mbe$, that is, $b$ is in $\sigma(\mbe)$. Therefore $\sigma(\mbe)\supseteq \mbk^{\sigma {\rm Gal}(\mbk/\mbe)\sigma^{-1}}$.
\end{proof}

Since ${\rm Gal}(\mbk/\mbe)$ is a normal subgroup of ${\rm Gal}(\mbk/\mbf)$, we obtain that $\sigma(\mbe)=\mbe$ for every $\sigma\in {\rm Gal}(\mbk/\mbf)$.

It's routine to check that $\varphi$ is an injective group homomorphism.

For every $\tau\in \txau(\mbe/\mbf)$, one can find an automorphism $\hat{\tau}$ on $\mbk$ such that $\hat{\tau}|_\mbe=\tau$ since $\mbk$ is a splitting field of a separable polynomial in $\pf$. This gives surjectivity of $\varphi$.

So $|\txau(\mbe/\mbf)|=|{\rm Gal}(\mbk/\mbf):{\rm Gal}(\mbk/\mbe)|=\frac{[\mbk:\mbf]}{[\mbk:\mbe]}=[\mbe:\mbf]$. Hence by Theorem~\ref{thm:GaloisCharacterizations}(2), $\mbe$ is a Galois extension of $\mbf$.

Suppose $\mbe$ is a Galois extension of $\mbf$. We are going to prove that $\sigma(\mbe)=\mbe$ for every $\sigma\in {\rm Gal}(\mbk/\mbf)$.

After proving this, we can define a  group homomorphism from ${\rm Gal}(\mbk/\mbf)\to {\rm Gal}(\mbe/\mbf)$ by sending $\sigma$  to $\sigma|_\mbe$. The kernel of this homomorphism is ${\rm Gal}(\mbk/\mbe)$. So ${\rm Gal}(\mbk/\mbe)$ is a normal subgroup of ${\rm Gal}(\mbk/\mbf)$.

By assumption $\mbe$ is a splitting field of a separable polynomial $f$ in $\pf$. Let $\alpha_1,\alpha_2,\cdots, \alpha_n\in \mbe$ be all roots of $f(x)$. Hence $\mbe=\mbf(\alpha_1,\cdots,\alpha_n)$ and $\sigma(\mbe)=\mbf(\sigma(\alpha_1),\cdots,\sigma(\alpha_n))$. Since $\sigma$ is in ${\rm Gal}(\mbk/\mbf)$, $\sigma(\alpha_1),\cdots,\sigma(\alpha_n)$ are all roots of $f(x)$. So $\sigma(\mbe)\subseteq \mbe$. Note that $[\sigma(\mbe):\mbf]=[\sigma(\mbe):\sigma(\mbf)]=[\mbe:\mbf]$. Hence $\sigma(\mbe)=\mbe$.

%To verify that $\sigma {\rm Gal}(\mbk/\mbe)\sigma^{-1}= {\rm Gal}(\mbk/\mbe)$ for every $\sigma\in {\rm Gal}(\mbk/\mbf)$, by Corollary~\ref{cor:\mbkG}, it suffices to prove that $\mbk^{\sigma {\rm Gal}(\mbk/\mbe)\sigma^{-1}}=\mbk^{{\rm Gal}(\mbk/\mbe)}=\mbe$.

(3) It's obvious that $\mbe_1\subseteq \mbe_2$ implies ${\rm Gal}(\mbk/\mbe_1)\supseteq {\rm Gal}(\mbk/\mbe_2)$.

The converse follows from $\mbe_1=\mbk^{{\rm Gal}(\mbk/\mbe_1)}$ and $\mbe_2=\mbk^{{\rm Gal}(\mbk/\mbe_2)}$.

It follows that ${\rm Gal}(\mbk/\mbe_1)$ and ${\rm Gal}(\mbk/\mbe_2)$ are subgroups of ${\rm Gal}(\mbk/\mbe_1\cap \mbe_2)$. Hence $ {\rm Gal}(\mbk/\mbe_1)\vee {\rm Gal}(\mbk/\mbe_2)$ is also a subgroup of ${\rm Gal}(\mbk/\mbe_1\cap \mbe_2)$.

On the other hand $\mbk^{ {\rm Gal}(\mbk/\mbe_1)\vee {\rm Gal}(\mbk/\mbe_2)}$ is a subfield of $\mbk^{{\rm Gal}(\mbk/\mbe_1)}=\mbe_1$ and $\mbk^{{\rm Gal}(\mbk/\mbe_2)}=\mbe_2$. Hence $\mbk^{{\rm Gal}(\mbk/\mbe_1)\vee {\rm Gal}(\mbk/\mbe_2)}$ is a subfield of $\mbe_1\cap \mbe_2$. Therefore
${\rm Gal}(\mbk/\mbe_1)\vee {\rm Gal}(\mbk/\mbe_2)={\rm Gal}(\mbk/\mbk^{ {\rm Gal}(\mbk/\mbe_1)\vee {\rm Gal}(\mbk/\mbe_2)})$ is a subgroup of ${\rm Gal}(\mbk/\mbe_1\cap \mbe_2)$.

Since $\mbe_1$ and $\mbe_2$ are subfields of $\mbe_1\mbe_2$,  ${\rm Gal}(\mbk/\mbe_1\mbe_2)\subseteq {\rm Gal}(\mbk/\mbe_1)\cap {\rm Gal}(\mbk/\mbe_2)$. Moreover if $\sigma$ is in ${\rm Gal}(\mbk/\mbe_1)\cap {\rm Gal}(\mbk/\mbe_2)$, then it fixes $\mbe_1$ and $\mbe_2$. Thus $\sigma$ fixes $\mbe_1\mbe_2$. So ${\rm Gal}(\mbk/\mbe_1\mbe_2)\supseteq {\rm Gal}(\mbk/\mbe_1)\cap {\rm Gal}(\mbk/\mbe_2)$.
\end{proof}

\section{Computations of Galois groups}

The \textbf{Galois group of a separable polynomial} $f(x)$ in $\pf$ \index{Galois group of a separable polynomial} means the Galois group ${\rm Gal}(\mbk/\mbf)$ with $\mbk$ being the splitting field of $f$.

(1) Galois group of $\mbq(\sqrt{2},\sqrt{3})/\mbq$:

 $\mbq(\sqrt{2},\sqrt{3})$ is the splitting field of $(x^2-2)(x^2-3)$ in $\pq$, hence $\mbq(\sqrt{2},\sqrt{3})/\mbq$ is a Galois extension. Moreover $G={\rm Gal}(\mbq(\sqrt{2},\sqrt{3})/\mbq)=\{e,\sigma,\tau,\sigma\tau\}$ where $\sigma:\mbq(\sqrt{2},\sqrt{3})\to \mbq(\sqrt{2},\sqrt{3})$ is given by $\sigma(\sqrt{2})=-\sqrt{2}$ and $\sigma(\sqrt{3})=\sqrt{3}$ and $\tau:\mbq(\sqrt{2},\sqrt{3})\to \mbq(\sqrt{2},\sqrt{3})$ is given by $\tau(\sqrt{2})=\sqrt{2}$ and $\sigma(\sqrt{3})=-\sqrt{3}$.

Denote $\mbq(\sqrt{2},\sqrt{3})$ by $\mbk$.

 The correspondence between subgroups of $G$ and intermediate fields between $\mbq$ and $\mbk$ is given by the following chart:
 \begin{center}

 \begin{tabular}{| c | c | }
    \hline
    $H$&$\mbk^H$  \\ \hline
    $\{e\}$&$\mbq(\sqrt{2},\sqrt{3})$\\ \hline
     $\{e,\sigma\}$&$\mbq(\sqrt{3})$\\ \hline
      $\{e,\tau\}$&$\mbq(\sqrt{2})$\\ \hline
       $\{e,\sigma\tau\}$&$\mbq(\sqrt{6})$\\ \hline
        $G$&$\mbq$\\ \hline

  \end{tabular}
\end{center}

(2) Galois group of $\mbq(\sqrt[3]{2},\sqrt{3}i)/\mbq$:
$\mbq(\sqrt[3]{2},\sqrt{3}i)$ is the splitting field of the irreducible polynomial $x^3-2$ in $\pq$, hence $\mbq(\sqrt[3]{2},\sqrt{3}i)/\mbq$ is a Galois extension. The polynomial $x^3-2$ has 3 roots: $\omega_1=\sqrt[3]{2}$, $\omega_2=\sqrt[3]{2}(-\frac{1}{2}+\frac{\sqrt{3}}{2}i)$ and $\omega_3=\sqrt[3]{2}(-\frac{1}{2}-\frac{\sqrt{3}}{2}i)$.

Every element in ${\rm Gal}(\mbq(\sqrt[3]{2},\sqrt{3}i)/\mbq)$ is a permutation of these 3 roots, hence $G={\rm Gal}(\mbq(\sqrt[3]{2},\sqrt{3}i)/\mbq)$ is a subgroup of $S_3$. Since $|{\rm Gal}(\mbq(\sqrt[3]{2},\sqrt{3}i)/\mbq)|=[\mbq(\sqrt[3]{2}: \sqrt{3}i)/\mbq]=6$, we have $G=S_3$.

Let $\mbk=\mbq(\sqrt[3]{2},\sqrt{3}i)$.

 The correspondence between subgroups of $G$ and intermediate fields between $\mbq$ and $\mbk$ is listed below:
 \begin{center}

 \begin{tabular}{| c | c | }
    \hline
    $H$&$\mbk^H$  \\ \hline
    $\{e\}$&$\mbq(\sqrt[3]{2},\sqrt{3}i)$\\ \hline
     $\{e,(1\,2)\}$&$\mbq(\omega_3)$\\ \hline
      $\{e,(1\,3)\}$&$\mbq(\omega_2)$\\ \hline
       $\{e,(2\,3)\}$&$\mbq(\omega_1)$\\ \hline
       $\{e,(1\,2\,3),(1\,3\,2)\}$&$\mbq(\sqrt{3}i)$\\ \hline
        $G$&$\mbq$\\ \hline

  \end{tabular}
\end{center}

(3) Galois group of $x^n-a$ in $\pf$:

\begin{theorem}~\label{thm:Roots}
Suppose $\mbf$ is a field with ${\rm Ch}\mbf\nmid n$ and $\mbk$ is a field over which $x^n-1$ splits. Then  $X_n=\{a\in \mbk\,|\, a^n-1=0\}$ is a cyclic group of order $n$.
\end{theorem}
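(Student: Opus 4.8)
The plan is to prove the statement in three moves: identify $X_n$ as a finite abelian group of order exactly $n$, establish a purely group-theoretic lemma about abelian groups, and then combine these using the bound on the number of roots of a polynomial over a field.

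First I would check that $X_n$ is a subgroup of the multiplicative group $\mbk^\times$. It contains $1$, and if $a^n=b^n=1$ then $(ab^{-1})^n=a^n(b^n)^{-1}=1$; it is abelian because $\mbk^\times$ is. To compute its cardinality, note that since $\mbk$ is an extension of $\mbf$ we have $\txch\mbk=\txch\mbf\nmid n$, so $n\neq 0$ in $\mbk$; by the identity $1=\tfrac1n x\cdot(nx^{n-1})-(x^n-1)$ noted after Theorem~\ref{thm:derivative}, the polynomial $x^n-1$ is prime to its derivative, hence separable. Since $x^n-1$ splits over $\mbk$ and has no multiple roots, it has exactly $n$ distinct roots in $\mbk$, i.e.\ $|X_n|=n$.

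Next I would prove the following lemma: in any finite abelian group $G$, if $m$ denotes the largest order of an element of $G$, then $g^m=1$ for every $g\in G$. The key ingredient is that if $a$ has order $r$ and $b$ has order $s$ with $\gcd(r,s)=1$, then $ab$ has order $rs$, because $\langle a\rangle\cap\langle b\rangle=1$ forces $a^d=b^d=1$ whenever $(ab)^d=1$. Granting this, suppose some $b\in G$ satisfied $\ord_b\nmid m$; then for some prime $p$ the exact power $p^l$ dividing $\ord_b$ satisfies $p^l\nmid m$, so $l$ exceeds the exact power $p^k$ dividing $m$. Writing $m=p^k u$ with $p\nmid u$ and taking $a$ of order $m$, the element $a^{p^k}$ has order $u$ while a suitable power of $b$ has order $p^l$; since $\gcd(u,p^l)=1$ their product has order $up^l>up^k=m$, contradicting the maximality of $m$. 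Hence $\ord_g\mid m$, so $g^m=1$, for every $g\in G$.

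Finally I would combine these with the elementary fact --- proved by the factor theorem and induction on degree, using Theorem~\ref{thm:Euclidean} --- that a nonzero polynomial of degree $d$ over the field $\mbk$ has at most $d$ roots in $\mbk$. Applying it to $G=X_n$ with $m$ the maximal order of an element of $X_n$: every element of $X_n$ is a root of $x^m-1$, so $n=|X_n|\le m$; conversely $m\le|X_n|=n$ by Lagrange's theorem. Therefore $m=n$, so $X_n$ contains an element $\zeta$ of order $n$ and $X_n=\langle\zeta\rangle$ is cyclic of order $n$. I expect the main obstacle to be the group-theoretic lemma, since the book does not develop the classification of finite abelian groups, so the ``maximal order is the exponent'' fact has to be built by hand from the coprime-order computation; the rest (the subgroup check, the separability count, the root bound) is routine.
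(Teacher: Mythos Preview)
Your proof is correct, and it takes a genuinely different group-theoretic route from the paper's. Both arguments begin the same way---separability of $x^n-1$ gives $|X_n|=n$---and both finish with the root bound ``a degree-$m$ polynomial has at most $m$ roots in $\mbk$.'' The difference lies in the middle. The paper invokes Sylow's theorem to split $X_n$ into its Sylow $p$-subgroups $G_i$, then shows each $G_i$ is cyclic by applying the root bound to $x^{m_i}-1$ inside each $G_i$ separately (where the ``maximal order is the exponent'' step is trivial because orders in a $p$-group are totally ordered by divisibility); finally it reassembles a generator of $X_n$ from the generators of the $G_i$. You instead prove directly, for an arbitrary finite abelian group, that the maximal element order equals the exponent, and then apply the root bound once to the whole group. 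Your route is more self-contained---it needs neither Sylow's theorem nor the second isomorphism theorem---and the lemma you isolate (exponent $=$ maximal order in a finite abelian group) is a useful standalone fact. The paper's route trades that lemma for heavier structural machinery but gets a simpler endgame inside each $p$-piece. Both ultimately rest on the same coprime-order computation $\ord_{ab}=\ord_a\cdot\ord_b$ when $\gcd(\ord_a,\ord_b)=1$; you use it to build a too-large element by hand, while the paper uses it implicitly to glue the Sylow generators together.
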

\begin{proof}
Firstly  $D(x^n-1)=nx^{n-1}\neq 0$  since ${\rm Ch}\mbf\nmid n$. Hence $\gcd(x^n-1,nx^{n-1})=1$. So $x^n-1=0$ is separable.

Therefore $X_n$ is an abelian group of order $n$. Suppose $n=p_1^{n_1}\cdots p_k^{n_k}$ for distinct primes $p_1,\cdots, p_k$ and positive integers $n_1,\cdots, n_k$. By Sylow's Theorem, there exist  subgroups $G_1, G_2, \cdots, G_k$ of $X_n$ whose orders are $p_1^{n_1},\cdots, p_k^{n_k}$ respectively. Since $X_n$ is abelian, each $G_i$ is a normal subgroup of $x_n$. By the second isomorphism theorem of groups, we have that  $X_n=G_1G_2\cdots G_k$. %Consequently $X_n\cong G_1\times\cdots\times G_k$.

To prove that $X_n$ is cyclic, it suffices to prove that every $G_i$ is cyclic.

%Every element in $G_i$ satisfies that $x^{p_i^{n_i}}-1=0$. %By Sylow's theorem, it is conjugate to the group $G_i$. Note that $X_n$ is abelian, hence it is $G_i$.

Let $m=\displaystyle\max_{\alpha\in G_i} \{{\rm order\, of}\, \alpha\}$. If $m=p_i^d$ for some $d<n_i$, then  $\alpha^m=1$ for every  $\alpha$ in $G_i$, that is, every element in $G_i$ is a root of $x^m-1=0$. Note that $x^m-1=0$ has at most $m$ distinct roots in $\mbk$, however $G_i$ has $p_i^{n_i}>m$ many distinct elements. This leads to a contradiction.  So there exists an element in $G_i$ with the order $p_i^{n_i}=|G_i|$, that is, $G_i$ is cyclic. %consists of  roots of $x^{p_i^{n_i}}-1=0$. Conversely every root of
\end{proof}

\begin{proposition}
Suppose $\mbf$ is a field with ${\rm Ch}\mbf\nmid n$. Then the Galois group of  $x^n-1$ in $\pf$ is isomorphic to $\mbz_n^{\times}$.
\end{proposition}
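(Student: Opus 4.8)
The plan is to identify the Galois group with a group of automorphisms of the cyclic group of $n$-th roots of unity. First I would set $\mbk$ to be the splitting field of $x^n-1$ over $\mbf$. Since ${\rm Ch}\mbf\nmid n$, the polynomial $x^n-1$ is separable (its derivative $nx^{n-1}$ is coprime to it, as observed just after Theorem~\ref{thm:derivative}), so $\mbk/\mbf$ is a Galois extension and $G={\rm Gal}(\mbk/\mbf)$ makes sense. By Theorem~\ref{thm:Roots}, the set $X_n=\{a\in\mbk\,|\,a^n=1\}$ is a cyclic group of order $n$; fix a generator $\zeta$. Every root of $x^n-1$ is a power of $\zeta$, so $\mbk=\mbf(\zeta)$.

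Next I would construct the comparison map. Each $\sigma\in G$ permutes the roots of $x^n-1$ and, being a field automorphism, preserves products; hence $\sigma$ restricts to a group automorphism of $X_n$. Because $X_n\cong\mbz_n$ is cyclic of order $n$, such an automorphism is determined by the image of $\zeta$, which must again be a generator, so $\sigma(\zeta)=\zeta^{a_\sigma}$ for a unique residue $\overline{a_\sigma}$ in $(\mbz/n\mbz)^\times$. Define $\chi\colon G\to(\mbz/n\mbz)^\times$ by $\chi(\sigma)=\overline{a_\sigma}$. From $(\sigma\tau)(\zeta)=\sigma(\zeta^{a_\tau})=\sigma(\zeta)^{a_\tau}=\zeta^{a_\sigma a_\tau}$ one gets $\chi(\sigma\tau)=\chi(\sigma)\chi(\tau)$, so $\chi$ is a homomorphism; and $\chi(\sigma)=\overline{1}$ forces $\sigma(\zeta)=\zeta$, hence $\sigma$ fixes $\mbf(\zeta)=\mbk$ and $\sigma=e$, so $\chi$ is injective. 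Thus $G$ is isomorphic to $\chi(G)\leq(\mbz/n\mbz)^\times=\mbz_n^\times$.

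The main obstacle is surjectivity of $\chi$, which is exactly what upgrades the embedding to an isomorphism: it amounts to $[\mbk:\mbf]=\phi(n)$, i.e. to the $n$-th cyclotomic polynomial $\Phi_n$ remaining irreducible over $\mbf$. For the classical base field $\mbf=\mbq$ this is precisely Theorem~\ref{thm:Phi}(ii), which gives $[\mbq(\zeta_n):\mbq]=\deg\Phi_n=\phi(n)=|(\mbz/n\mbz)^\times|$ and hence $|G|=|\mbz_n^\times|$, forcing $\chi$ onto. In the general statement I would therefore either restrict to $\mbf=\mbq$ (or to fields over which $\Phi_n$ stays irreducible) or read the conclusion as the embedding $G\hookrightarrow\mbz_n^\times$; settling this point is the only delicate matter, the remaining verifications being the routine ones sketched above.
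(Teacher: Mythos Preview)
Your approach is essentially identical to the paper's: fix a primitive $n$-th root $\zeta$, send $\sigma$ to the exponent $a_\sigma$ with $\sigma(\zeta)=\zeta^{a_\sigma}$, and verify this is an injective homomorphism into $\mbz_n^\times$. The paper does exactly this.

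Where you and the paper diverge is on surjectivity, and here your caution is warranted while the paper's argument is not. The paper claims surjectivity by asserting that for each $m\in\mbz_n^\times$ the rule $p(\zeta)\mapsto p(\zeta^m)$ defines an automorphism of $\mbk$. But well-definedness of this rule requires that whenever $p(\zeta)=0$ for $p\in\pf$ one also has $p(\zeta^m)=0$, i.e.\ that $\zeta^m$ is a root of the minimal polynomial $m_{\zeta,\mbf}$. That is exactly the statement that $m_{\zeta,\mbf}=\Phi_n$, which fails over many base fields: for instance with $\mbf=\mbr$ and $n=5$ the splitting field is $\mbc$, so $|G|=2\neq 4=\phi(5)$. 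Thus the proposition as stated is false in general; what survives for arbitrary $\mbf$ with $\mathrm{Ch}\,\mbf\nmid n$ is precisely the embedding $G\hookrightarrow\mbz_n^\times$ that you establish. Your reading---isomorphism over $\mbq$ via Theorem~\ref{thm:Phi}(ii), embedding in general---is the correct one.
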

\begin{proof}
 Let $\mbk$ be a splitting field of $x^n-1$. Denote the set of roots of $x^n-1=0$ by $X_n$.

 By Theorem~\ref{thm:Roots}, $\mbk=\mbf(\zeta)$ for $\zeta$ being a generator of the cyclic group $X_n$.

  Then every $\sigma$ in ${\rm Gal}(\mbk/\mbf)$ is completely determined by $\sigma(\zeta)$. It's easy to see that $\sigma(\zeta)$ is also a generator of $X_n$, hence we can define a map $\Phi: {\rm Gal}(\mbk/\mbf)\to \mbz_n^{\times}$ by $\sigma(\zeta)=\zeta^{\Phi(\sigma)}$.

One can verify the following.

\begin{enumerate}
  \item $\zeta^{\Phi(\tau\sigma)}=\tau\sigma(\zeta)=\tau(\zeta^{\Phi(\sigma)})=\tau(\zeta)^{\Phi(\sigma)}=\zeta^{\Phi(\tau)\Phi(\sigma)}$ for all $\sigma,\tau$ in ${\rm Gal}(\mbk/\mbf)$, which means $\Phi$ is a group homomorphism.
  \item If $\Phi(\sigma)=\Phi(\tau)$ in $\mbz_n^{\times}$, then $\sigma(\zeta)=\tau(\zeta)$. Hence $\Phi$ is injective.
  \item For every $m$ in  $\mbz_n^{\times}$, we can define an automorphism on $\mbk$ by sending to $p(\zeta)$ to $p(\zeta^m)$ for every $p$ in $\pf$~\footnote{Note that $\mbf(\zeta)=\mbf[\zeta]$.}. So $\Phi$ is surjective.
\end{enumerate}

Hence ${\rm Gal}(\mbk/\mbf)$ is isomorphic to  $\mbz_n^{\times}$.
\end{proof}

 We are going to prove the following.

\begin{theorem}
~\label{thm: cyclicextension}
Suppose $\mbf$ is a field with ${\rm Ch}\mbf\nmid n$ and all roots of $x^n-1$ are in $\mbf$. Then the Galois group of  $x^n-a$ in $\pf$ is a cyclic group of order dividing  $n$. Conversely if ${\rm Gal}(\mbk/\mbf)$ is a cyclic group of order $k$ and $k$ divides $n$, then $\mbk=\mbf(\sqrt[k]{a})$ for some $a\in \mbf$.
\end{theorem}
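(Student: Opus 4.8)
The plan is to prove the two implications separately; both rest on the hypothesis that $\mbf$ already contains all $n$-th roots of unity, together with the characterizations of Galois extensions in Theorem~\ref{thm:GaloisCharacterizations}. For the forward direction, let $\mbk$ be the splitting field of $x^n-a$ over $\mbf$; we may assume $a\neq 0$, since otherwise $\mbk=\mbf$ and the Galois group is trivial. First I would check that $x^n-a$ is separable: $D(x^n-a)=nx^{n-1}\neq 0$ because $\txch\mbf\nmid n$, and its only root $0$ is not a root of $x^n-a$, so $x^n-a$ is prime to its derivative and hence separable by Theorem~\ref{thm:derivative}; consequently $\mbk/\mbf$ is a Galois extension. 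Fix a root $\alpha$ of $x^n-a$ in $\mbk$. By Theorem~\ref{thm:Roots} the $n$-th roots of unity form a cyclic group of order $n$ lying in $\mbf$, so every root of $x^n-a$ has the form $\zeta^i\alpha$ with $\zeta$ a fixed generator of that group; in particular $\mbk=\mbf(\alpha)$. Every $\sigma\in {\rm Gal}(\mbk/\mbf)$ is then determined by $\sigma(\alpha)=\zeta^{c(\sigma)}\alpha$ for a well-defined $c(\sigma)\in\mbz/n\mbz$, and since $\sigma$ fixes $\zeta$ the assignment $\sigma\mapsto c(\sigma)$ is an injective group homomorphism ${\rm Gal}(\mbk/\mbf)\to\mbz/n\mbz$. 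Hence ${\rm Gal}(\mbk/\mbf)$ is isomorphic to a subgroup of $\mbz/n\mbz$, so it is cyclic of order dividing $n$.

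For the converse, suppose ${\rm Gal}(\mbk/\mbf)=\langle\sigma\rangle$ is cyclic of order $k$ with $k\mid n$ (if $k=1$ then $\mbk=\mbf$ and $a=1$ works). Choose a primitive $k$-th root of unity $\zeta\in\mbf$: the $k$-th roots of unity sit inside the cyclic group of $n$-th roots of unity, which is contained in $\mbf$, and they form a cyclic group of order $k$ since $\txch\mbf\nmid k$. The heart of the argument is the Lagrange resolvent: for $\theta\in\mbk$ put $\beta=\sum_{i=0}^{k-1}\zeta^i\sigma^i(\theta)$. Shifting the summation index and using $\sigma^k={\rm id}$, $\zeta^k=1$ and $\sigma(\zeta)=\zeta$ gives $\sigma(\beta)=\zeta^{-1}\beta$, hence $\sigma(\beta^k)=\zeta^{-k}\beta^k=\beta^k$; since $\mbk^{{\rm Gal}(\mbk/\mbf)}=\mbf$ by Theorem~\ref{thm:GaloisCharacterizations}, this yields $\beta^k=a\in\mbf$, i.e. $\beta=\sqrt[k]{a}$.

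It remains to choose $\theta$ so that $\beta\neq 0$ and to show $\mbf(\beta)=\mbk$. For the first, the automorphisms ${\rm id},\sigma,\dots,\sigma^{k-1}$ are distinct characters $\mbk^{\times}\to\mbk^{\times}$, hence linearly independent over $\mbk$ by Theorem~\ref{thm: LinearIndependence}, so the combination $\sum_i\zeta^i\sigma^i$ with nonzero coefficients $\zeta^i$ cannot vanish identically; pick $\theta$ with $\beta\neq 0$. For the second, $\sigma^j(\beta)=\zeta^{-j}\beta$, which equals $\beta$ precisely when $k\mid j$, i.e. when $\sigma^j={\rm id}$; therefore ${\rm Gal}(\mbk/\mbf(\beta))=\{{\rm id}\}$, and since $\mbk$ is Galois over $\mbf(\beta)$ by Theorem~\ref{thm:FundamentalGalois}(1) we conclude $\mbf(\beta)=\mbk^{\{{\rm id}\}}=\mbk$, so $\mbk=\mbf(\sqrt[k]{a})$. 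I expect the converse to be the main obstacle: producing a generator of $\mbk$ of the special form $\sqrt[k]{a}$ is exactly what the resolvent accomplishes, and the two points one must not skip are the non-vanishing of $\beta$ (which forces the use of linear independence of characters) and the verification that $\beta$ is moved by every nontrivial element of ${\rm Gal}(\mbk/\mbf)$.
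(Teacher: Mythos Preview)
Your proposal is correct and follows essentially the same line as the paper: an injective homomorphism ${\rm Gal}(\mbk/\mbf)\to\mbz/n\mbz$ via $\sigma(\alpha)=\zeta^{c(\sigma)}\alpha$ for the forward direction, and the Lagrange resolvent $\beta=\sum_i\zeta^i\sigma^i(\theta)$ together with linear independence of characters for the converse. If anything you are slightly more careful than the paper---you handle the trivial case $a=0$, verify separability of $x^n-a$ explicitly, and keep the distinction between $k$ and $n$ straight---but the ideas and their sequencing are the same.
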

\begin{proof}
Let $\mbk$ be the splitting field of the polynomial $x^n-a$. We are going to define an injective homomorphism from ${\rm Gal}(\mbk/\mbf)$ into $\mbz_n$.

Let $\sqrt[n]{a}\in \mbk$ be a root of $x^n-a=0$ and $\zeta$ be a primitive root of $x^n-1=0$. Then $\{\sqrt[n]{a}\zeta^j\}_{j=0}^{n-1}$ exhaust all roots of $x^n-a=0$. Every $\sigma\in {\rm Gal}(\mbk/\mbf)$ maps $\sqrt[n]{a}$ to $\sqrt[n]{a}\zeta^j$ for some $0\leq j\leq n-1$ and denote $j$  by $m_\sigma$.  Define a map $\Phi:{\rm Gal}(\mbk/\mbf)\to \mbz_n$ by $\Phi(\sigma)=m_\sigma$.

We verify that $\Phi$ is an injective homomorphism.

Firstly $\sigma\tau(\sqrt[n]{a})=\sqrt[n]{a}\zeta^{m_{\sigma\tau}}$ for all $\sigma,\tau\in {\rm Gal}(\mbk/\mbf)$. On the other hand

$$\sigma\tau(\sqrt[n]{a})=\sigma(\tau(\sqrt[n]{a}))=\sigma(\sqrt[n]{a}\zeta^{m_\tau})=\zeta^{m_\tau} \sigma(\sqrt[n]{a})=\sqrt[n]{a}\zeta^{m_\tau}\zeta^{m_\sigma}=\sqrt[n]{a}\zeta^{m_\tau+m_\sigma}.$$

Hence $\Phi$ is a group homomorphism.

Secondly if $m_\sigma=m_\tau$ then $\sigma=\tau$ since $\mbk=\mbf(\sqrt[n]{a})$. So $\Phi$ is  injective. Therefore ${\rm Gal}(\mbk/\mbf)$ is isomorphic to a subgroup of $\mbz_n$ which is a cyclic group of order dividing $n$.

Now suppose $\mbk$ is a Galois extension of $\mbf$ with ${\rm Gal}(\mbk/\mbf)=\{1,\sigma,\sigma^2,\cdots,\sigma^{n-1}\}$ being cyclic. Let $\zeta\in \mbf$ be a primitive root of $x^n-1=0$.

Since $\{1,\sigma,\sigma^2,\cdots,\sigma^{k-1}\}$ is linearly independent over $\mbf$, there exists $\alpha$ in $\mbk$ such that~\footnote{The element $(\alpha,\zeta)$ is called a Lagrange resolvent.}
$$(\alpha,\zeta):=\alpha+\zeta\sigma(\alpha)+\zeta^2\sigma^2(\alpha)+\cdots+\zeta^{k-1}\sigma^{k-1}(\alpha)\neq 0.$$

One has $\sigma(\alpha,\zeta)=\sigma(\alpha)+\zeta\sigma^2(\alpha)+\zeta^2\sigma^3(\alpha)+\cdots+\zeta^{k-1}\sigma^{k}(\alpha)=\zeta^{-1}(\alpha,\zeta)$.

For every $0<j<k$, we have $\sigma^j(\alpha,\zeta)=\zeta^{-j}(\alpha,\zeta)$. Hence $(\alpha,\zeta)\in \mbk$ cannot be fixed by any element in ${\rm Gal}(\mbk/\mbf)$ except the identity. Hence ${\rm Gal}(\mbk/\mbf((\alpha,\zeta)))=\{1\}$ which implies that $\mbk=\mbf((\alpha,\zeta))$. From $\sigma(\alpha,\zeta)=\zeta^{-1}(\alpha,\zeta)$, we know that $\sigma((\alpha,\zeta)^k)=\zeta^{-k}(\alpha,\zeta)^k=(\alpha,\zeta)^k$. That is, $(\alpha,\zeta)^k$ is in $\mbk^{{\rm Gal}(\mbk/\mbf)}=\mbf$. %So  $(\alpha,\zeta)$ is a root of $x^n-a=0$ for some $a$ in $\mbf$.

\end{proof}

\begin{definition}
  An extension $\mbk/\mbf$ is called \textbf{cyclic} if it is a Galois extension with the  Galois group being cyclic.~\index{cyclic extension}
\end{definition}

(4) Galois group of $\mbf_{p^n}/\mbf_p$:

We know that $\mbf_{p^n}$ is the splitting field of the separable polynomial $x^{p^n}-x$ in $\mbf_p$, hence is a Galois extension of $\mbf_p$. Also $[\mbf_{p^n}: \mbf_p]=n$.

\begin{theorem}
${\rm Gal}(\mbf_{p^n}/\mbf_p)\cong\mbz_n$.
\end{theorem}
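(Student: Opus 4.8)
The plan is to exhibit an explicit generator of $\mathrm{Gal}(\mbf_{p^n}/\mbf_p)$, namely the Frobenius automorphism, and then count. By Lemma~\ref{lm:p}, the map $\varphi:\mbf_{p^n}\to\mbf_{p^n}$ given by $\varphi(a)=a^p$ is a ring homomorphism, and since $\mbf_{p^n}$ is finite it is an isomorphism, hence $\varphi\in\txau(\mbf_{p^n})$. It fixes $\mbf_p$ pointwise by Fermat's little theorem ($a^p=a$ for all $a\in\mbf_p$, see Exercise~\ref{Fermat's little theorem}), so $\varphi\in\mathrm{Gal}(\mbf_{p^n}/\mbf_p)$. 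First I would show that $\varphi$ has order exactly $n$ in this group. For $1\le k<n$, the iterate $\varphi^k$ sends $a\mapsto a^{p^k}$, so $\varphi^k=e$ would force every element of $\mbf_{p^n}$ to be a root of $x^{p^k}-x$; but that polynomial has at most $p^k<p^n=|\mbf_{p^n}|$ roots, a contradiction. On the other hand $\varphi^n$ sends $a\mapsto a^{p^n}$, and every element of $\mbf_{p^n}$ satisfies $x^{p^n}-x=0$ (since $\mbf_{p^n}$ is the splitting field of that separable polynomial over $\mbf_p$), so $\varphi^n=e$. Thus $\ord_\varphi=n$.

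Next I would invoke that $\mbf_{p^n}/\mbf_p$ is a Galois extension with $[\mbf_{p^n}:\mbf_p]=n$ — this is exactly the paragraph preceding the statement, which notes $\mbf_{p^n}$ is the splitting field of the separable polynomial $x^{p^n}-x$ over $\mbf_p$ and that $[\mbf_{p^n}:\mbf_p]=n$. By the Galois characterization (Theorem~\ref{thm:GaloisCharacterizations}(2)), $|\mathrm{Gal}(\mbf_{p^n}/\mbf_p)|=[\mbf_{p^n}:\mbf_p]=n$. Since $\langle\varphi\rangle$ is a cyclic subgroup of order $n$ sitting inside a group of order $n$, we get $\mathrm{Gal}(\mbf_{p^n}/\mbf_p)=\langle\varphi\rangle$, which is cyclic of order $n$, hence isomorphic to $\mbz_n$ via $\varphi^k\mapsto \bar k$.

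The only genuinely substantive point is the order computation for $\varphi$, and even that is short once one uses the bound on the number of roots of a polynomial over a field. Everything else is a direct appeal to results already in the excerpt: Lemma~\ref{lm:p} for $\varphi$ being an automorphism, Fermat's little theorem for $\varphi|_{\mbf_p}=\mathrm{id}$, and Theorem~\ref{thm:GaloisCharacterizations} together with the immediately preceding discussion for the order of the Galois group. So I would not expect a real obstacle; if anything, the one place to be slightly careful is making sure the polynomial-root bound $\deg g \ge \#\{\text{roots of }g\}$ in a field is available — it follows from the factor theorem (a root $\alpha$ gives $g=(x-\alpha)h$) and induction, which is standard and implicit in the text's treatment of $\pf$ as a unique factorization / principal ideal domain.
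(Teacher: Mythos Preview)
Your proof is correct and follows essentially the same approach as the paper: exhibit the Frobenius map $a\mapsto a^p$ as an element of the Galois group, show it has order exactly $n$ via the root-count bound on $x^{p^k}-x$, and conclude using $|\mathrm{Gal}(\mbf_{p^n}/\mbf_p)|=[\mbf_{p^n}:\mbf_p]=n$. The paper's argument is a bit terser but identical in substance.
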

\begin{proof}
We are going to prove that the Frobenius isomorphism $\sigma: \mbf_{p^n}\to \mbf_{p^n}$ defined by $\sigma(a)=a^p$ for every $a$ in $\mbf_{p^n}$ is of order $n$.
Hence  ${\rm Gal}(\mbf_{p^n}/\mbf_p)=\langle\sigma\rangle\cong \mbz_n$.

First $\sigma$ is an automorphism on $\mbf_{p^n}$ fixing $\mbf_p$ since $a^p=a$ for every $a$ in $\mbf_p$. Secondly $\sigma^j\neq 1$ for all $0\leq j<n$ otherwise all elements of $\mbf_{p^n}$ is a root of $x^{p^j}-x=0$. This leads to a contradiction since $x^{p^j}-x=0$ has at most $p^j$ roots and $\mbf_{p^n}$ has $p^n$ elements.

\end{proof}

By the fundamental theorem of Galois theory, we can classify subfields of $\mbf_{p^n}$ via classifying subgroups of $\mbz_n$.
\begin{corollary}
  A subfield of $\mbf_{p^n}$ is $\mbf_{p^d}$ for some $d|n$.
\end{corollary}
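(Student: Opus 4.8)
The plan is to apply the Fundamental Theorem of Galois Theory (Theorem~\ref{thm:FundamentalGalois}) together with the elementary structure of subgroups of the cyclic group $\mbz_n$. First I would check that every subfield $\mbe$ of $\mbf_{p^n}$ contains the prime field $\mbf_p$, so that $\mbe$ is genuinely an intermediate field of the Galois extension $\mbf_{p^n}/\mbf_p$. This is a short argument: the multiplicative identity $1_\mbe$ of the field $\mbe$ satisfies $1_\mbe^2=1_\mbe$ in the integral domain $\mbf_{p^n}$ and $1_\mbe\neq 0$, so $1_\mbe$ is the $1$ of $\mbf_{p^n}$; since $\mbe$ is closed under addition, $\mbe$ then contains $\{0,1,2\cdot 1,\ldots,(p-1)\cdot 1\}=\mbf_p$.

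Next, since $\mbf_{p^n}/\mbf_p$ is Galois with $\txau(\mbf_{p^n}/\mbf_p)\cong\mbz_n$, Theorem~\ref{thm:FundamentalGalois} gives a bijection between the intermediate fields $\mbe$ and the subgroups $H$ of $\mbz_n$, with $\mbe=\mbf_{p^n}^H$ and $[\mbe:\mbf_p]=[\mbz_n:H]$. The subgroups of a cyclic group of order $n$ are exactly one (cyclic) subgroup of order $m$ for each divisor $m$ of $n$ — this follows from the exercise that subgroups of cyclic groups are cyclic, together with Lagrange's theorem — so $[\mbe:\mbf_p]=n/m=:d$ is itself a divisor of $n$. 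Hence $|\mbe|=p^{[\mbe:\mbf_p]}=p^d$.

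Finally I would identify $\mbe$ as the field $\mbf_{p^d}$ already constructed. The cleanest way is not to argue "a field with $p^d$ elements must be $\mbf_{p^d}$" directly, but to observe that the nonzero elements of $\mbe$ form a multiplicative group of order $p^d-1$, so every element of $\mbe$ is a root of $x^{p^d}-x$; since this polynomial has at most $p^d$ roots, $\mbe$ is precisely its set of roots, i.e.\ $\mbe$ is a splitting field of $x^{p^d}-x$ over $\mbf_p$. By uniqueness of splitting fields (Theorem~\ref{thm:split}), $\mbe\cong\mbf_{p^d}$. (Conversely, running the correspondence backwards shows each divisor $d$ of $n$ does occur.) There is no serious obstacle here; the only point requiring care is exactly this last identification step — one must route through the splitting-field characterization rather than invoke "finite fields of a given order are unique" as if it were already available — and, as a bookkeeping remark, noting that $m\mapsto n/m$ permutes the divisors of $n$ so that indexing by $H$ or by $d$ gives the same family of subfields.
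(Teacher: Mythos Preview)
Your proposal is correct and follows essentially the same route as the paper: apply the Fundamental Theorem of Galois Theory to the cyclic extension $\mbf_{p^n}/\mbf_p$, so that intermediate fields correspond to subgroups of $\mbz_n$, and read off $[\mbe:\mbf_p]=n/m=d\mid n$. You are simply more careful than the paper, which omits both your prime-field containment check and your splitting-field identification of $\mbe$ with $\mbf_{p^d}$, jumping straight from $[\mbe:\mbf_p]=d$ to ``$\mbe\cong\mbf_{p^d}$''.
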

\begin{proof}
By  the fundamental theorem of Galois theory, every  subfield $\mbe$ of $\mbf_{p^n}$ is of the form $\mbf_{p^n}^H$ for some subgroup $H$ of $\mbz_n$ with $m=|H|$ dividing $n$. Hence
$$[\mbe:\mbf_p]=[{\rm Gal}(\mbf_{p^n}/\mbf_p):{\rm Gal}(\mbf_{p^n}/\mbe)]=d=\frac{n}{m}.$$%The group $H$ is generated by $\sigma^d$ for $d=\frac{n}{m}$. Hence $\mbe=\{a\in \mbf_{p^n}|\sigma^d(a)=a\}$,
That is, $\mbe\cong \mbf_{p^d}$.
\end{proof}

\section{Galois' great theorem and polynomials solvable by radicals}

Via concrete examples, we explain how to describe solvability of a polynomial by radicals in terms of fields extensions.

Consider a quadratic  $f(x)=x^2+bx+c$ in $\mbc[x]$. Let $\mbf=\mbq(b,c)$. Then the splitting field of $f(x)$ is $\mbf(\sqrt{b^2-4c})$ and $\sqrt{b^2-4c}$ is a root of $x^2-(b^2-4c)$ in $\pf$. In another word, The splitting field of $f(x)$ is embedded into the tower $\mbf\subseteq \mbf(\sqrt{b^2-4c})$ with $\sqrt{b^2-4c}$ being a root of $x^2-(b^2-4c)$ in $\pf$.

Let $f(x)=x^3+qx+r$ be a cubic in $\mbc[x]$.~\footnote{Solving the cubic equation $y^3+ay^2+by+c=0$ amounts to solving the cubic equation  $x^3+qx+r=0$ by letting $y=x-\frac{a}{3}$.} Define $\mbf=\mbq(q,r)$. The roots of $f(x)$ are of the form $y+z$, $\omega y+\omega^2 z$ and $\omega^2 y+\omega z$ where $y^3=\frac{1}{2}(-r+\sqrt{r^2+\frac{4q^3}{27}})$, $z=\frac{-q}{3y}$ and $\omega=e^{\frac{2\pi i}{3}}$. Hence the splitting field of $f(x)$ is embedded into $\mathbb{B}_3$ in a tower
$$\mbf=\mathbb{B}_0\subseteq \mathbb{B}_1=\mathbb{B}_0(\omega)\subseteq \mathbb{B}_2=\mathbb{B}_1(\sqrt{r^2+\frac{4q^3}{27}})\subseteq \mathbb{B}_3=\mathbb{B}_2(\sqrt[3]{\frac{1}{2}(-r+\sqrt{r^2+\frac{4q^3}{27}})}).$$

The above motivate the following definitions.

\begin{definition}
An extension $\mbe$ of $\mbf$ is called a \textbf{radical extension}\index{radical extension} if there exists a tower of fields~(called a \textbf{ radical tower})\index{radical tower}
$\mbf=\mathbb{B}_0\subseteq \mathbb{B}_1\subseteq\cdots\subseteq \mathbb{B}_s=\mbe$ such that each $\mathbb{B}_{i+1}=\mathbb{B}_i(\sqrt[\leftroot{-1}\uproot{2}m_i]{a_i})$ with $a_i$ in $\mathbb{B}_i$ for some positive integer $m_i$. A polynomial $f(x)$ in $\pf$ is \textbf{solvable by radicals}\index{solvable by radicals} if $f(x)$ splits over a radical extension of $\mbf$.
\end{definition}

\begin{proposition}~\label{prop:GaloisRadicalTower}

Let $\mbf$ be a field of characteristic 0.

If $f(x)$ in $\pf$ is  solvable by radicals, then its splitting field is contained in a radical extension $\mbf=\mathbb{B}_0\subseteq \mathbb{B}_1\subseteq\cdots\subseteq \mathbb{B}_s=\mbe$ such that  $\mbe/\mbf$ is a Galois extension and $\mathbb{B}_{i+1}$ is a cyclic extension of $\mathbb{B}_i$ for every $i$.~\footnote{We call such  a tower \textbf{cyclic}.}~\index{cyclic tower}
\end{proposition}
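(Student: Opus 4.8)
The plan is to start from an arbitrary radical tower witnessing solvability and reshape it in three moves: refine it so every step has prime degree, pre-adjoin the relevant roots of unity so every step becomes cyclic over its predecessor, and finally pass to the Galois closure over $\mbf$ so the top field becomes normal --- checking at each move that the properties ``radical'' and ``all consecutive steps cyclic'' are preserved.

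First I would fix a radical extension $\mbf=\mathbb{C}_0\subseteq\mathbb{C}_1\subseteq\cdots\subseteq\mathbb{C}_r$ with $\mathbb{C}_{i+1}=\mathbb{C}_i(\sqrt[m_i]{a_i})$, $a_i\in\mathbb{C}_i$, over which $f$ splits. Writing each $m_i$ as a product of (not necessarily distinct) primes and extracting one prime-order radical at a time, I may assume every exponent is prime: $\mathbb{C}_{i+1}=\mathbb{C}_i(\alpha_i)$ with $\alpha_i^{p_i}=a_i\in\mathbb{C}_i$ and $p_i$ prime. This only lengthens the tower, which stays radical.

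Next, for cyclicity, I would build a new tower by inserting a primitive $p_i$-th root of unity just before each $\alpha_i$: $\mbf\subseteq\mbf(\zeta_{p_0})\subseteq\mbf(\zeta_{p_0})(\alpha_0)\subseteq\mbf(\zeta_{p_0},\alpha_0)(\zeta_{p_1})\subseteq\cdots$, with top field $\mbe'$; since every $\alpha_i$ still occurs, $\mathbb{C}_r\subseteq\mbe'$. Each inserted step $K\subseteq K(\zeta_{p_i})$ is radical ($\zeta_{p_i}$ is a $p_i$-th root of $1\in K$) and is the splitting field of the separable polynomial $x^{p_i}-1$, hence Galois with group embedding into $\mbz_{p_i}^{\times}$, which is cyclic because $p_i$ is prime (by Theorem~\ref{thm:Roots} the multiplicative group of $\mbf_{p_i}$ is cyclic). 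Each root-adjunction step $L\subseteq L(\alpha_i)$ takes place over a field $L$ containing both $a_i$ and $\zeta_{p_i}$, so $L(\alpha_i)$ is the splitting field of $x^{p_i}-a_i$ over $L$ and is cyclic over $L$ by Theorem~\ref{thm: cyclicextension}; it is of course also radical. Since $\txch\mbf=0$, separability is automatic throughout, so $\mbe'$ is radical over $\mbf$ with every consecutive step cyclic.

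Finally I would let $\mbe$ be the Galois closure of $\mbe'/\mbf$ inside an algebraic closure; this is needed because the particular roots $\alpha_i$ chosen need not carry along their $\mbf$-conjugates, so $\mbe'/\mbf$ may fail to be normal. Then $\mbe$ is the compositum of the finitely many $\mbf$-embeddings $\sigma(\mbe')$. Applying $\sigma$ to the tower for $\mbe'$ shows each $\sigma(\mbe')$ is again radical over $\mbf$ with cyclic steps: $\sigma$ fixes $\mbf$, carries each cyclotomic field $\mbf(\zeta_{p_i})$ onto itself (it is Galois over $\mbf$), and carries $\alpha_i$ to another root of $x^{p_i}-\sigma(a_i)$ over the $\sigma$-image of the field below it. A compositum of radical extensions is radical --- stack the towers, lifting each step $\mathbb{B}_j\subseteq\mathbb{B}_j(\sqrt[m]{b})$ to $K\subseteq K(\sqrt[m]{b})$ over the field $K$ accumulated so far --- and each lifted step stays cyclic: if $M/L$ is cyclic (Galois) and $K\supseteq L$, then $MK/K$ is Galois (a splitting field of the same separable polynomial) and restriction embeds $\operatorname{Gal}(MK/K)$ into $\operatorname{Gal}(M/L)$, while a subgroup of a cyclic group is cyclic. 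Hence $\mbe/\mbf$ is Galois and radical with all consecutive steps cyclic, and $\mbe\supseteq\mbe'\supseteq\mathbb{C}_r\supseteq(\text{splitting field of }f)$, as asserted. The step I expect to demand the most care is precisely this last one --- inheriting \emph{both} ``radical tower'' and ``each step cyclic'' under the compositum; and the structural reason the argument must pass through a prime-degree reduction is that adjoining a single $\zeta_m$ for $m=\operatorname{lcm}(m_i)$ yields only an abelian, not cyclic, step, whereas $\mbz_p^{\times}$ is cyclic for $p$ prime.
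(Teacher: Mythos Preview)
Your argument is correct and follows a genuinely different route from the paper's. The paper adjoins all relevant roots of unity in one step, taking $\mbf'$ to be the splitting field of $x^n-1$ with $n=\operatorname{lcm}(m_i)$, and forms the tower $\mbf\subseteq\mbf\mbf'\subseteq\mathbb{B}_1\mbf'\subseteq\cdots\subseteq\mathbb{B}_s\mbf'$; the steps above $\mbf\mbf'$ are cyclic by Theorem~\ref{thm: cyclicextension}, and the abelian bottom step $\mbf\subseteq\mbf\mbf'$ is then refined into cyclic pieces via the fundamental theorem of Galois theory. Your prime-degree reduction achieves the same end more directly, since each $K\subseteq K(\zeta_p)$ is cyclic outright, and so avoids invoking the Galois correspondence for that refinement. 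The more substantive difference is how normality of the top over $\mbf$ is secured: the paper asserts that $\mathbb{B}_s\mbf'$ is the splitting field of $\prod_i(x^{m_i}-a_i)$, but since $a_i\in\mathbb{B}_i$ rather than $\mbf$ this polynomial does not lie in $\pf$, and in fact the top field need not be normal over $\mbf$ (e.g.\ with all $m_i=2$, the field $\mbq(\sqrt{2},\sqrt{1+\sqrt{2}})$ is real while its normal closure over $\mbq$ is not). Your explicit passage to the Galois closure as a compositum of $\mbf$-embeddings, together with the verification that ``radical'' and ``each step cyclic'' survive both embeddings and composita, is exactly what is needed to close that gap; your longer route is therefore also the more rigorous one.
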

\begin{proof}%~[Proof of Proposition~\ref{prop:GaloisRadicalTower}]\

Suppose $f(x)$ in $\pf$ is  solvable by radicals, then the splitting field $\mbk$ of $f(x)$ is embedded into a radical tower $\mbf=\mathbb{B}_0\subseteq \mathbb{B}_1\subseteq\cdots\subseteq \mathbb{B}_s$ with $\mathbb{B}_{i+1}=\mathbb{B}_i(\sqrt[\leftroot{-1}\uproot{2}m_i]{a_i})$ with $a_i$ in $\mathbb{B}_i$ for all $1\leq i\leq s-1$. Let $\mbf'$ be  the splitting field of  $x^n-1$ with $n$ being the least common multiple of $m_i's$.

%Let $\mbf_i$ be the splitting fields of $x^{m_i}-1$ over $\mbf$.

One may assume that all fields above sit in a larger field so that compositions of these fields make sense.

Then we get a radical tower:
$$\mbf\subseteq \mbf\mbf'\subseteq \mathbb{B}_1\mbf'\subseteq \mathbb{B}_2\mbf'\subseteq\cdots\subseteq \mathbb{B}_s\mbf'=\mbe.$$
%$$\mbf\subseteq \mbf\mbf_1\subseteq \mbf\mbf_1\mbf_2\subseteq\cdots\subseteq\mbf\mbf_1\cdots\mbf_{s-1}\subseteq \mbf\mbf'\subseteq \mathbb{B}_1\mbf'\subseteq \mathbb{B}_2\mbf'\subseteq\cdots\subseteq \mathbb{B}_s\mbf'.$$

First of all one can check that $\mbe$ is the splitting field of $\displaystyle\prod_{i=1}^s (x^{m_i}-a_i)$.

Secondly by Theorem~\ref{thm: cyclicextension}, the tower $\mbf\mbf'\subseteq \mathbb{B}_1\mbf'\subseteq \mathbb{B}_2\mbf'\subseteq\cdots\subseteq \mathbb{B}_s\mbf'$ is already what we want.

At last by the fundamental theorem of Galois theory, one can insert intermediate fields between $\mbf$ and $\mbf\mbf'$ to get a cyclic tower. %$\mbf\subseteq \mbf\mbf_1\subseteq \mbf\mbf_1\mbf_2\subseteq\cdots\subseteq\mbf\mbf_1\cdots\mbf_{s-1}\subseteq \mbf\mbf'$
\end{proof}
\begin{comment}
Next we prove the following.
\begin{proposition}~\label{prop:GaloisRadicalTower}
If $f(x)$ in $\pf$ is  solvable by radicals, then its splitting field can be embedded into a radical tower $\mbf=B_0\subseteq B_1\subseteq\cdots\subseteq B_s=\mbe$ such that  $B_{i+1}$ is a cyclic extension of $B_i$ for every $i$.
\end{proposition}
\begin{proof}
  Suppose $f(x)$ in $\pf$ is  solvable by radicals, then the splitting field $\mbk$ of $f(x)$ is embedded into a radical tower $\mbf=\mbe_0\subseteq \mbe_1\subseteq\cdots\subseteq \mbe_s=\mbe$.
\end{proof}
\end{comment}
We are now in front of one  central theorem in Galois theory.
\begin{theorem}(Galois' Great Theorem)
~\label{thm: GaloisGreat}

Let $\mbf$ be a field of characteristic 0.

Then a polynomial $f(x)$ in $\pf$ is solvable by radicals iff  the Galois group ${\rm Gal}(\mbk/\mbf)$ of its splitting field $\mbk$ is solvable.
\end{theorem}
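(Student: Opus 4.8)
The plan is to prove the two implications separately, translating between radical towers of fields and subnormal chains of subgroups via the fundamental theorem of Galois theory (Theorem~\ref{thm:FundamentalGalois}). At the outset I would replace $f$ by the product of its distinct monic irreducible factors, which has the same splitting field $\mbk$; since $\txch\mbf=0$ this polynomial is separable (Proposition~\ref{prop:sep}), so $\mbk/\mbf$ is a Galois extension by Theorem~\ref{thm:GaloisCharacterizations}. All references to $\txau(\mbk/\mbf)$ below are therefore to ${\rm Gal}(\mbk/\mbf)$.

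For the ``only if'' direction, suppose $f$ is solvable by radicals. By Proposition~\ref{prop:GaloisRadicalTower} the splitting field $\mbk$ sits inside a radical tower $\mbf=\mathbb{B}_0\subseteq\mathbb{B}_1\subseteq\cdots\subseteq\mathbb{B}_s=\mbe$ with $\mbe/\mbf$ Galois and each $\mathbb{B}_{i+1}/\mathbb{B}_i$ cyclic, hence Galois. Put $G_i={\rm Gal}(\mbe/\mathbb{B}_i)$, so $G_0={\rm Gal}(\mbe/\mbf)$ and $G_s=1$; since $\mbe/\mathbb{B}_i$ is Galois (Theorem~\ref{thm:FundamentalGalois}) and $\mathbb{B}_{i+1}/\mathbb{B}_i$ is Galois, Theorem~\ref{thm:FundamentalGalois}(2) applied with big field $\mbe$ and the pair $\mathbb{B}_i\subseteq\mathbb{B}_{i+1}$ gives $G_{i+1}\unlhd G_i$ and $G_i/G_{i+1}\cong{\rm Gal}(\mathbb{B}_{i+1}/\mathbb{B}_i)$, which is cyclic and so abelian. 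Thus $1=G_s\unlhd\cdots\unlhd G_0={\rm Gal}(\mbe/\mbf)$ is a solvable sequence, so ${\rm Gal}(\mbe/\mbf)$ is solvable. Finally $\mbf\subseteq\mbk\subseteq\mbe$ with $\mbk/\mbf$ Galois, so by Theorem~\ref{thm:FundamentalGalois}(2) again ${\rm Gal}(\mbk/\mbf)$ is a quotient of ${\rm Gal}(\mbe/\mbf)$, hence solvable by Proposition~\ref{prop:solvable}(1).

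For the ``if'' direction, assume $G={\rm Gal}(\mbk/\mbf)$ is solvable and set $n=|G|=[\mbk:\mbf]$. First I would adjoin a primitive $n$-th root of unity $\zeta$: let $\mbf'=\mbf(\zeta)$ and $\mbk'=\mbk(\zeta)$, which is the splitting field of $f$ over $\mbf'$ and hence Galois over $\mbf'$. Restriction gives a homomorphism ${\rm Gal}(\mbk'/\mbf')\to{\rm Gal}(\mbk/\mbf)$: it is well defined because $\mbk/\mbf$ is normal, so every $\sigma\in{\rm Gal}(\mbk'/\mbf')$ satisfies $\sigma(\mbk)=\mbk$, and it is injective because $\sigma$ fixing $\mbk$ and $\mbf'$ fixes the compositum $\mbk'$. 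Hence ${\rm Gal}(\mbk'/\mbf')$ is a subgroup of a solvable group, so solvable, and by Exercise~\ref{ex: Solvable} we may choose a chain ${\rm Gal}(\mbk'/\mbf')=H_0\supseteq H_1\supseteq\cdots\supseteq H_r=1$ with $H_{i+1}$ normal in $H_i$ and each $H_i/H_{i+1}$ cyclic. Setting $\mathbb{L}_i=(\mbk')^{H_i}$, Theorem~\ref{thm:FundamentalGalois} yields $\mbf'=\mathbb{L}_0\subseteq\mathbb{L}_1\subseteq\cdots\subseteq\mathbb{L}_r=\mbk'$ with $\mathbb{L}_{i+1}/\mathbb{L}_i$ cyclic of order $d_i$ dividing $n$. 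Since $\mathbb{L}_i\supseteq\mbf'$ already contains all $n$-th, hence all $d_i$-th, roots of unity, Theorem~\ref{thm: cyclicextension} gives $\mathbb{L}_{i+1}=\mathbb{L}_i(\sqrt[d_i]{a_i})$ for some $a_i\in\mathbb{L}_i$. Prepending the radical step $\mbf\subseteq\mbf(\zeta)=\mbf(\sqrt[n]{1})$, which is legitimate as $1\in\mbf$, produces a radical tower $\mbf\subseteq\mbf'=\mathbb{L}_0\subseteq\cdots\subseteq\mathbb{L}_r=\mbk'$; since $f$ splits over $\mbk\subseteq\mbk'$, this exhibits $f$ as solvable by radicals.

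The ``only if'' half is largely bookkeeping once Proposition~\ref{prop:GaloisRadicalTower} is available, so the substance is the converse. I expect the two genuinely delicate points there to be: verifying that the restriction map ${\rm Gal}(\mbk'/\mbf')\to{\rm Gal}(\mbk/\mbf)$ is well defined and injective (which rests on the normality of $\mbk/\mbf$ together with $\mbk'=\mbk\mbf'$), and ensuring the required roots of unity are present before invoking Theorem~\ref{thm: cyclicextension} — which is precisely why one passes from $\mbf$ to $\mbf(\zeta)$ at the very beginning. The hypothesis $\txch\mbf=0$ is used throughout: to know splitting fields are Galois (Proposition~\ref{prop:sep}, Theorem~\ref{thm:GaloisCharacterizations}) and to guarantee $\txch\mbf\nmid n$, so that Theorems~\ref{thm:Roots} and~\ref{thm: cyclicextension} apply.
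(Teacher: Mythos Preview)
Your proof is correct. The ``only if'' direction matches the paper's essentially verbatim: both invoke Proposition~\ref{prop:GaloisRadicalTower}, pass to the chain $G_i={\rm Gal}(\mbe/\mathbb{B}_i)$ via the fundamental theorem, and then descend to the quotient ${\rm Gal}(\mbk/\mbf)$.

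For the ``if'' direction the overall strategy is the same, but the execution differs in one point worth noting. The paper works directly with $\mbk/\mbf$: it takes a subnormal chain in ${\rm Gal}(\mbk/\mbf)$ with cyclic quotients (Exercise~\ref{ex: Solvable}), passes to the tower of fixed fields $\mathbb{B}_i=\mbk^{G_i}$, and then invokes Theorem~\ref{thm: cyclicextension} to declare each $\mathbb{B}_{i+1}/\mathbb{B}_i$ a radical step. You instead adjoin a primitive $n$-th root of unity first and carry out the argument over $\mbk'/\mbf'$. This is the more careful route: the converse half of Theorem~\ref{thm: cyclicextension} requires the relevant roots of unity to already lie in the base field, a hypothesis the paper's argument does not explicitly arrange for the intermediate fields $\mathbb{B}_i$. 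Your version pays a small price (checking that restriction ${\rm Gal}(\mbk'/\mbf')\to{\rm Gal}(\mbk/\mbf)$ is a well-defined injection) in exchange for having that hypothesis cleanly satisfied at every step --- which is exactly the delicate point you identified.
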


\begin{proof}
Suppose $f(x)$ is solvable by radicals.

By Proposition~\ref{prop:GaloisRadicalTower}, there exists a radical tower
$\mbf=\mathbb{B}_0\subseteq \mathbb{B}_1\subseteq\cdots\subseteq \mathbb{B}_s=\mbe$ such that $\mbe$ is a Galois extension of $\mbf$ and every $\mathbb{B}_{i+1}$ is a Galois extension of $\mathbb{B}_i$ with ${\rm Gal}(\mathbb{B}_{i+1}/\mathbb{B}_i)$ cyclic.

By the fundamental theorem of Galois theory, one has ${\rm Gal}(\mbe/\mathbb{B}_0)\geq {\rm Gal}(\mbe/\mathbb{B}_1)\geq \cdots  \geq {\rm Gal}(\mbe/\mbe)=\{1\}$. Also each $\mathbb{B}_{i+1}$ is a Galois extension of $\mathbb{B}_i$, hence ${\rm Gal}(\mbe/\mathbb{B}_{i+1})$ is a normal subgroup of ${\rm Gal}(\mbe/\mathbb{B}_i)$ and ${\rm Gal}(\mbe/\mathbb{B}_i)/{\rm Gal}(\mbe/\mathbb{B}_{i+1})\cong {\rm Gal}(\mathbb{B}_{i+1}/\mathbb{B}_i)$ is cyclic. So ${\rm Gal}(\mbe/\mbf)$ is a solvable group. The Galois group of $f(x)$, ${\rm Gal}(\mbk/\mbf)\cong {\rm Gal}(\mbe/\mbf)/{\rm Gal}(\mbe/\mbk)$ is also solvable.

Now assume that $G={\rm Gal}(\mbk/\mbf)$  is solvable.  By Exercise~\ref{ex: Solvable}, one has $G_0=G\unlhd G_1\unlhd G_2\unlhd\cdots \unlhd G_s=\{1\}$ such that $G_i/G_{i+1}$ is cyclic. By the fundamental theorem of Galois theory, we have
$\mbf=\mbk^{G_0}\subseteq \mbk^{G_1}\subseteq \mbk^{G_2}\subseteq\cdots\subseteq \mbk^{G_s}=\mbk$. Let $\mathbb{B}_i=\mbk^{G_i}$ and note that ${\rm Gal}(\mathbb{B}_s/\mathbb{B}_i)={\rm Gal}(\mbk/\mbk^{G_i})=G_i$. For every $i$, since ${\rm Gal}(\mbk/\mathbb{B}_{i+1})=G_{i+1}$ is a normal subgroup of ${\rm Gal}(\mbk/\mathbb{B}_{i})=G_{i}$, one has that $\mathbb{B}_{i+1}$ is a Galois extension of $\mathbb{B}_i$ with ${\rm Gal}(\mathbb{B}_{i+1}/\mathbb{B}_i)$ being cyclic. By Theorem~\ref{thm: cyclicextension}, $\mbf=\mathbb{B}_0\subseteq \mathbb{B}_1\subseteq \cdots \subseteq \mathbb{B}_s=\mbk$ is a radical tower. So $f(x)$ is solvable by radicals.

\end{proof}

\begin{remark}
In Theorem~\ref{thm: GaloisGreat}, we assume that $\mbf$ is of characteristic 0 to guarantee that the characteristic of $\mbf$ does not divide $m_i$ for each $i$. Hence Theorem~\ref{thm: cyclicextension} applies in the proof.
\end{remark}
\begin{corollary}
Over  a field of characteristic 0,  all polynomials of degree less than 5 are solvable by radicals.
\end{corollary}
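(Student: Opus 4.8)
The plan is to combine Galois' Great Theorem (Theorem~\ref{thm: GaloisGreat}) with the solvability of the small symmetric groups. Fix a field $\mbf$ with $\txch\mbf=0$ and a polynomial $f(x)$ in $\pf$ with $\deg f=n\leq 4$ (the cases $\deg f\leq 1$ being trivial). First I would reduce to the Galois situation: replacing $f$ by the product of the distinct monic irreducible factors occurring in its factorization changes neither the set of roots nor the splitting field, and by Proposition~\ref{prop:sep} this new polynomial is separable because $\txch\mbf=0$. So if $\mbk$ denotes the splitting field of $f$ over $\mbf$, then $\mbk$ is the splitting field of a separable polynomial in $\pf$, hence $\mbk/\mbf$ is a Galois extension; by Theorem~\ref{thm: GaloisGreat} it then suffices to show that $G={\rm Gal}(\mbk/\mbf)$ is a solvable group.

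Next I would exhibit $G$ as a subgroup of $S_n$. Let $\{\alpha_1,\dots,\alpha_m\}\subseteq\mbk$ be the roots of $f$, so $m\leq n\leq 4$ and $\mbk=\mbf(\alpha_1,\dots,\alpha_m)$. Every $\sigma\in G$ sends a root of $f$ to a root of $f$, so permuting $\{\alpha_1,\dots,\alpha_m\}$ gives a group homomorphism $G\to S_m$. This map is injective, since an automorphism of $\mbk$ fixing $\mbf$ and each $\alpha_i$ fixes all of $\mbf(\alpha_1,\dots,\alpha_m)=\mbk$ and hence is the identity. Thus $G$ is isomorphic to a subgroup of $S_m$ with $m\leq 4$.

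Finally, $S_m$ is solvable for every $m\leq 4$: this is exactly the content of the examples of solvable groups above (for $m\leq 3$ the group $S_m$ is either abelian or admits a solvable sequence through $A_3$; for $m=4$ one has $1\lhd V\lhd A_4\lhd S_4$ with abelian successive quotients, where $V=\{e,(1\,2)(3\,4),(1\,3)(2\,4),(1\,4)(2\,3)\}$). Since a subgroup of a solvable group is solvable, $G$ is solvable, and therefore $f(x)$ is solvable by radicals by Theorem~\ref{thm: GaloisGreat}.

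The only genuinely delicate point is the first step: Theorem~\ref{thm: GaloisGreat} requires the splitting field to arise from a \emph{separable} polynomial, and this is precisely where the hypothesis $\txch\mbf=0$ enters, via Proposition~\ref{prop:sep}. Everything after that is bookkeeping: realizing $G$ as a permutation group on at most four symbols and quoting the solvability of $S_4$ together with the fact that subgroups of solvable groups are solvable.
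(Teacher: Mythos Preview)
Your proposal is correct and follows essentially the same route as the paper: embed the Galois group into $S_n$ with $n\leq 4$, invoke the solvability of $S_4$, and apply Galois' Great Theorem. The paper's proof is a terse two-line version of what you wrote; your additional care about separability (passing to the product of distinct irreducible factors via Proposition~\ref{prop:sep}) and about distinguishing the number of distinct roots $m$ from the degree $n$ makes explicit what the paper leaves implicit.
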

\begin{proof}
 Suppose that $f$ is in $\mbf[x]$ and ${\rm deg}f\leq 4$. The Galois group of $f$ is a subgroup of $S_4$, hence solvable.
\end{proof}

\begin{theorem}[Abel-Ruffini]\

There exists a quintic polynomial in $\mbq[x]$ which is not solvable by radicals.
\end{theorem}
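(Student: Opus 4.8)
The plan is to exhibit the explicit quintic $f(x)=x^5-80x+5$ in $\mbq[x]$ — the polynomial already flagged in Chapter~1 — and to show that its Galois group over $\mbq$ is all of $S_5$, which is unsolvable; Galois' Great Theorem (Theorem~\ref{thm: GaloisGreat}) then immediately yields the claim. First I would verify irreducibility by Eisenstein's criterion at the prime $5$: the prime $5$ divides the coefficients $0,0,0,-80$, does not divide the leading coefficient $1$, and $25$ does not divide the constant term $5$, so $f$ is irreducible over $\mbq$. Let $\mbk\subseteq\mbc$ be the splitting field of $f$ and $G={\rm Gal}(\mbk/\mbq)$. Since $\mbq$ has characteristic $0$, $f$ is separable, so it has $5$ distinct roots, and the action of $G$ on this root set embeds $G$ as a subgroup of $S_5$.

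Next I would pin down $G$ inside $S_5$ by producing two specific permutations. Because $f$ is irreducible, $G$ acts transitively on the $5$ roots (Lemma~\ref{lem:transitivity}), hence $5$ divides $|G|$ by the orbit--stabilizer theorem, and Cauchy's corollary furnishes an element of order $5$ in $G$, that is, a $5$-cycle. For the second generator I would do a short calculus estimate: $f'(x)=5x^4-80=5(x^2-4)(x^2+4)$ has real critical points only at $x=\pm 2$, with $f(-2)=133>0$ and $f(2)=-123<0$; together with $f(x)\to\pm\infty$ as $x\to\pm\infty$, the intermediate value theorem forces \emph{exactly three} real roots, and therefore one conjugate pair of non-real roots. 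Complex conjugation restricts to an automorphism of $\mbk$ fixing $\mbq$ (since $\mbk$ is generated over $\mbq$ by roots of a polynomial with real, indeed rational, coefficients, it is stable under conjugation), and this automorphism fixes the three real roots and interchanges the two non-real ones — a transposition in $S_5$.

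Then I would invoke Exercise~\ref{ex:S5}: a $5$-cycle and a transposition generate $S_5$, so $G=S_5$. Since $A_5$ is simple and nonabelian (Theorem~\ref{thm:Ansimple}), it is unsolvable, and hence its overgroup $S_5$ is unsolvable because a subgroup of a solvable group is solvable. Applying Theorem~\ref{thm: GaloisGreat} (valid since ${\rm Ch}\,\mbq=0$), we conclude that $f(x)=x^5-80x+5$ is not solvable by radicals over $\mbq$.

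The only genuinely non-formal step is the root count — confirming that $f$ has exactly three real roots — and that is the point where I would be most careful; everything else is the assembly of results already established. Two small checks are worth flagging: that Exercise~\ref{ex:S5} really applies here to an \emph{arbitrary} $5$-cycle paired with an \emph{arbitrary} transposition, which is true precisely because $5$ is prime (for prime $p$, any $p$-cycle and any transposition generate $S_p$); and that complex conjugation does restrict to $\mbk$, which, as noted above, holds because $\mbk$ is a splitting field of a real polynomial inside $\mbc$.
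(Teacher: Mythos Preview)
Your proof is correct and follows essentially the same route as the paper's own argument: the same polynomial $x^5-80x+5$, Eisenstein at $5$, the calculus-based count of three real roots, complex conjugation as a transposition, and Exercise~\ref{ex:S5} to conclude $G=S_5$. Your write-up is in fact a bit more careful than the paper's, supplying the explicit values $f(\pm 2)$, justifying why conjugation restricts to $\mbk$, and noting that the primality of $5$ is what makes Exercise~\ref{ex:S5} work for an arbitrary transposition.
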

\begin{proof}
Consider $f(x)=x^5-80x+5$ in $\mbq[x]$. Let $\mbk$ be the splitting field of $f$ and $G={\rm Gal}(\mbk/\mbq)$.

By Eisenstein's criterion $f(x)$ is irreducible. Let $\alpha$ be a root of $f(x)=0$. Then $[\mbq(\alpha):\mbq]=5$. Hence 5 divides $|G|=[\mbk:\mbq]$.

Since $f'(x)=5x^4-80$, $f(x)$ is decreasing in $[-2,2]$ and  increasing in $(-\infty, -2]$ and $[2,\infty)$. Note that $f(-2)>0$ and $f(2)<0$, so $f(x)$ has exactly 3 real roots. Therefore the complex conjugate of complex number is an element in $G$. This element is a transposition since it interchanges two complex roots and fixes the other roots.

So $G$  contains a 5-cycle and a transposition in $S_5$. Hence $G=S_5$ by Exercise~\ref{ex:S5}. Since $G$ is not solvable, by Galois' great theorem $f(x)$ is not solvable by radicals.
\end{proof}
\section{Ruler and compass problems}

\begin{definition}
We say that a complex number $z=x+iy$ or a point $(x,y)$ in the plane  is \textbf{constructible by straightedge and compass}\index{constructible by straightedge and compass} if it can be obtained by the following straightedge-compass operations:

%A real number  $x$ is called \textbf{ constructible} if $(x,0)$ is constructible.~\index{constructible}

%We say $(x,y)\in\mbr^2$ is constructible by straightedge and compass if it can be obtained via the above mentioned operations.

%An immediate observation is that $(x,y)$ is constructible iff both $x$ and $y$ are constructible.

\begin{itemize}
  \item connecting two given points by a straight line;
  \item finding an intersection point of two straight lines;
  \item drawing a circle with given center and radius;
  \item finding intersection points of a straight line and a circle or the intersection of two circles.
\end{itemize}

\end{definition}

 Via straightedge-compass operations, one can achieve the following:

\begin{itemize}
\item finding a line  through a given point $A$  parallel to a given line $l$;
\begin{center}
\begin{tikzpicture}
%\draw[<->] (6,0) node[below]{$q$} -- (0,0) --
   % (0,6) node[left]{$V(q)$};
%\draw [thick, blue](0,0) to (3,0);
\draw[thick,blue](-2,0)--(5,0) node[below] {$l$};
\draw [thick, blue](0,0) to (1,1);
\draw [dotted,thick, red](-1,1)--(5,1);
\draw[dotted,thick,red] (3,0)--(4,1);
%\draw[fill,red] (4,1) circle [radius=0.05];
\draw [thick] (4.1,0.9) arc [radius=5, start angle=42, end angle= 45];
\draw  [thick](3.9,0.95) arc [radius=5, start angle=120, end angle= 117];
\draw[fill] (0,0) circle [radius=0.05];
\draw[fill] (3,0) circle [radius=0.05];
\draw[fill] (1,1)node[below right]{$A$} circle [radius=0.05];
\end{tikzpicture}
\end{center}
\item finding a line  passing through a given point $A$ on a line $l$ and perpendicular to $l$;
\begin{center}
\begin{tikzpicture}
\draw [dotted,thick, red](0,2)--(0,-2);
\draw [thick] (0.1,1.9) arc [radius=5, start angle=42, end angle= 45];
\draw [thick] (-0.1,1.95) arc [radius=5, start angle=120, end angle= 117];
\draw [thick] (0.1,-2.1) arc [radius=5, start angle=42, end angle= 45];
\draw [thick] (-0.1,-2.05) arc [radius=5, start angle=120, end angle= 117];
\draw[fill] (0,0) node[below left]{$A$}circle [radius=0.05];
\draw[fill] (-1,0) circle [radius=0.05];
\draw[fill] (1,0) circle [radius=0.05];
\draw[thick,blue](-2,0)--(5,0) node[below] {$l$};
\end{tikzpicture}
\end{center}
\item finding the middle point of a line segment $AB$.
\begin{center}
\begin{tikzpicture}
\draw [thick, blue](0,0)--(4,0);
\draw[fill] (0,0) node[below]{$A$}circle [radius=0.05];
\draw[fill] (4,0) node[below]{$B$}circle [radius=0.05];
\draw[fill] (2,0) node[below left]{$M$}circle [radius=0.05];
\draw[dotted,thick,red](2,3)--(2,-3);
\draw [thick] (2.1,2.9) arc [radius=5, start angle=42, end angle= 45];
\draw [thick] (1.9,2.95) arc [radius=5, start angle=120, end angle= 117];
\draw [thick] (2.1,-3.1) arc [radius=5, start angle=42, end angle= 45];
\draw [thick] (1.9,-3.05) arc [radius=5, start angle=120, end angle= 117];
%\draw[fill,purple] (2,3) circle [radius=0.05];
%\draw[fill,purple] (2,-3)circle [radius=0.05];
\end{tikzpicture}
\end{center}
\end{itemize}

Set a fixed radius by 1.

From above, we have the following observations.

\begin{remarks}
\begin{enumerate}
\item By a straightedge and a compass, one can build a rectangular coordinate  system with rational coordinates.
\item The point $(x,y)$ is constructible iff $(x,0)$ and $(0,y)$ is constructible.
\end{enumerate}
\end{remarks}

\begin{definition}
A real number  $x$ is called \textbf{constructible} if $(x,0)$ is constructible, or equivalently, $(0,x)$ is constructible by straightedge and compass.~\index{constructible}
\end{definition}

If $a, b$ in $\mbr$ are constructible, then $a\pm b$ are also constructible.

If positive numbers $a, b$ are constructible, then $ab$ and $\frac{a}{b}$ are constructible as indicated by the following figures:

\begin{center}
\begin{tikzpicture}
%\begin{minipage}{0.45\textwidth}
%\centering
\draw [thick](0,0)--(2,0);
\draw [thick](0,0)--(0,2.4);
\draw [thick](1,0)node[below]{$a$}--(0,1.2)node[left]{1};
\draw [thick,dotted, purple](2,0)--(0,2.4);
\node[left] at (0,2.4){$b$};
\node[below, purple] at (2,0) {$ab$};
\end{tikzpicture}
%\end{minipage}
%\begin{minipage}{0.45\textwidth}
%\centering
\hspace{2cm}
\begin{tikzpicture}
\draw [thick](6,0)--(8,0);
\draw [thick](6,0)--(6,2.4);
\draw [thick,dotted,purple](7,0)node[below]{$\frac{a}{b}$}--(6,1.2);
\node[left] at (6,1.2) {1};
\draw [thick](8,0)node[below]{$a$}--(6,2.4);
\node[left] at (6,2.4){$b$};
%\end{minipage}
\end{tikzpicture}
\end{center}

So all constructible real numbers form a field $\mbe$ containing $\mbq$.

Suppose we have two constructible points  in  $\mbf^2$ for a field $\mbf$ consisting of constructible numbers. Connecting these two points   gives a straight line $ax+by+c=0$ in $\mbr^2$ with $a,b,c\in \mbf$. We call it a constructible line over $\mbf$.
\begin{theorem}
~\label{thm: StraightedgeCompass}

Suppose $\mbf$ is a field of real numbers and  $[\mbf:\mbq]<\infty$. Then $\mbf$  is a subfield of $\mbe$ iff $[\mbf:\mbq]=2^m$ for some nonnegative integer $m$.%. A number $\alpha$ is obtained from $\mbq$ through finitely many straightedge-compass operations iff $[\mbq(\alpha):\mbq]=2^m$ for some nonnegative integer $m$.
\end{theorem}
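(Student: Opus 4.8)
The plan is to prove the two implications separately; the forward direction ($\mbf\subseteq\mbe\Rightarrow[\mbf:\mbq]=2^m$) is essentially bookkeeping, while the converse carries the real subtlety.

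For the forward direction I would first record how the straightedge–compass operations interact with fields. Fix a subfield $\mathbb{L}\subseteq\mbr$ containing the coordinates of the points already constructed. A line through two such points has an equation $ax+by+c=0$ with $a,b,c\in\mathbb{L}$, and a circle centred at a constructed point with radius the distance between two constructed points has an equation $x^2+y^2+dx+ey+f=0$ with $d,e,f\in\mathbb{L}$. The intersection of two lines over $\mathbb{L}$ is found by a linear system, so its coordinates stay in $\mathbb{L}$; the intersection of a line with a circle, or of two circles (subtract the two circle equations to reduce to the line–circle case), reduces to a single quadratic over $\mathbb{L}$, so the new coordinates lie in $\mathbb{L}(\sqrt{t})$ for some nonnegative $t\in\mathbb{L}$. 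Iterating, every constructible real $\alpha$ lies at the top of a finite tower $\mbq=\mathbb{F}_0\subseteq\mathbb{F}_1\subseteq\cdots\subseteq\mathbb{F}_k$ with each $[\mathbb{F}_{i+1}:\mathbb{F}_i]\in\{1,2\}$, so repeated use of Theorem~\ref{thm: fielddim} makes $[\mathbb{F}_k:\mbq]$ a power of $2$, hence so is its divisor $[\mbq(\alpha):\mbq]$. Now if $\mbf\subseteq\mbe$ with $[\mbf:\mbq]<\infty$, Theorem~\ref{thm:alf} gives $\mbf=\mbq(\alpha_1,\dots,\alpha_n)$ with each $\alpha_i$ constructible; forming the compositum $\mathbb{L}$ of the quadratic towers of all the $\alpha_i$ inside $\mbr$ and building it up one generator at a time, each step adjoins the square root of an element of the field built so far and hence multiplies the degree over $\mbq$ by $1$ or $2$, so $[\mathbb{L}:\mbq]=2^N$, and therefore $[\mbf:\mbq]$, a divisor of $2^N$ by Theorem~\ref{thm: fielddim}, equals $2^m$ for some $m$.

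For the converse, assume $[\mbf:\mbq]=2^m$; the target is a tower $\mbq=\mathbb{F}_0\subset\mathbb{F}_1\subset\cdots\subset\mathbb{F}_m=\mbf$ with every $[\mathbb{F}_{i+1}:\mathbb{F}_i]=2$. Granting such a tower, in characteristic $0$ each step has the form $\mathbb{F}_{i+1}=\mathbb{F}_i(\sqrt{t_i})$ with $t_i\in\mathbb{F}_i$ (complete the square on the quadratic minimal polynomial of a generator), and since $\mathbb{F}_{i+1}\subseteq\mbr$ we get $t_i\ge 0$; as the constructible reals contain $\mbq$ and are closed under nonnegative square roots (the altitude-on-hypotenuse/geometric-mean construction), induction on $i$ gives $\mathbb{F}_i\subseteq\mbe$, hence $\mbf\subseteq\mbe$. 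To produce the quadratic tower I would pass to the Galois closure $\mathbb{G}$ of $\mbf/\mbq$ in $\mbc$: it is a Galois extension, $G=\mathrm{Gal}(\mathbb{G}/\mbq)$ is a $2$-group, so (a standard consequence of the class equation, of the kind underlying the Sylow theory in Chapter~1) there is a chain of subgroups $\mathrm{Gal}(\mathbb{G}/\mbf)=H_0<H_1<\cdots<H_t=G$ with every index equal to $2$, and applying the Fundamental Theorem of Galois Theory (Theorem~\ref{thm:FundamentalGalois}) to $\mathbb{G}/\mbq$ turns this into the desired chain $\mbf=\mathbb{G}^{H_0}\supset\mathbb{G}^{H_1}\supset\cdots\supset\mathbb{G}^{H_t}=\mbq$ of quadratic extensions.

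The step I expect to be the genuine obstacle is precisely the assertion that $G=\mathrm{Gal}(\mathbb{G}/\mbq)$ is a $2$-group: this does \emph{not} follow from $[\mbf:\mbq]=2^m$ for an arbitrary finite extension (a real root of an irreducible quartic with Galois group $S_4$, such as $x^4-x-1$, generates a degree-$4$ field yet is not constructible). So before writing up the converse I would pin down the intended hypothesis — almost certainly that $\mbf/\mbq$ is normal, equivalently that its Galois closure already has $2$-power degree — and state it explicitly; under that hypothesis the argument above goes through, while the forward direction requires no such restriction.
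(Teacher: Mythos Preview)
Your forward direction is essentially the paper's: both analyse the three intersection types (line--line, line--circle, circle--circle) and observe that each produces coordinates in at most a quadratic extension of the current field, then chain these into a tower of $2$-power degree. Your version is slightly more careful in passing from ``each $\alpha\in\mbf$ has $2$-power degree'' to ``$[\mbf:\mbq]$ is a $2$-power'' via a compositum; the paper asserts the reduction without comment.

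Your instinct about the converse is exactly right, and in fact the paper's proof is wrong at precisely the spot your counterexample hits. The paper's inductive step takes $\alpha\in\mbf\setminus\mbf'$ with $[\mbf'(\alpha):\mbf']=2^n$ and asserts that $[\mbf'(\alpha):\mbf'(\alpha^2)]=2$, hence $\mbf'(\alpha^2)$ is a proper intermediate field. But for your $\alpha$ a real root of $x^4-x-1$ one has $\alpha^4=\alpha+1$, so $\alpha=(\alpha^2)^2-1\in\mbq(\alpha^2)$, whence $\mbq(\alpha)=\mbq(\alpha^2)$ and no intermediate field is produced. There is in fact \emph{no} field strictly between $\mbq$ and $\mbq(\alpha)$ here (the subgroups of $S_4$ containing a point-stabiliser $S_3$ are only $S_3$ and $S_4$), so no inductive scheme of this kind can succeed, and the theorem as stated is false.

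Your proposed repair --- require that the Galois closure of $\mbf/\mbq$ have $2$-power degree, then use the subgroup chain in a $2$-group via Theorem~\ref{thm:FundamentalGalois} --- is the standard correct formulation, and your write-up of it is sound. You should flag explicitly in your solution that the hypothesis needs strengthening and that the paper's argument contains this gap.
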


\begin{proof}

Suppose a finite extension $\mbf$ of $\mbq$ is a subfield of $\mbe$. 

To prove $[\mbf:\mbq]=2^m$, it suffices to prove that  $[\mbq(\alpha):\mbq]$ is a power of 2 for every $\alpha$ in $\mbf$. Note that $\alpha$ is obtained from $\mbq$ after finitely many straightedge and compass operations. So it's enough to show that if a real number $\alpha$ is obtained from a subfield $\mbf'$ of $\mbf$ by one straightedge and compass operation, then $[\mbf'(\alpha):\mbf']=1$ or 2.

If $(\alpha,\beta)$ is the intersection point of two constructible lines over $\mbf'$. Then $\alpha$ is a root of a polynomial of degree 1 in $\mbf'[x]$, which means $\mbf'(\alpha)=\mbf'$.

If $(\alpha,\beta)$ is an intersection point of a circle $(x-x_0)^2+(y-y_0)^2=r^2$ and a straight line $ax+by+c=0$ with $x_0, y_0, r, a,b,c\in \mbf'$. Then $\alpha$ and $\beta$  are roots of an equation in $\mbf'[x]$ of degree at most 2. Hence $[\mbf'(\alpha):\mbf']=1$ or 2.

Similarly if  $(\alpha,\beta)$ is an intersection point of two circles whose center's coordinates and radius are in $\mbf'$. Combining the equations of the two circles, we see that $(\alpha,\beta)$ is an intersection point of a circle $(x-x_0)^2+(y-y_0)^2=r^2$ and a straight line $ax+by+c=0$ with $x_0, y_0, r, a,b,c\in \mbf'$. Again $[\mbf'(\alpha):\mbf']=1$ or 2.

%So  a real number $\alpha$ is constructible implies that $[\mbq(\alpha):\mbq]=2^m$.

Conversely we prove the following statement:

If $\mbf'$ is a field of constructible numbers and  a  field $\mbf$ of real numbers is a finite extension of $\mbq$ such that $[\mbf:\mbf']=2^m$, then $\mbf$ also consists of constructible numbers. 

We do induction on $m$.

The following figure shows that if  a positive number $a$ is constructible, then $\sqrt{a}$ is constructible. Hence the statement holds when $m=1$.

 %firstly $\sqrt{2}$ is constructible and secondly $\sqrt{2a}$ is constructible. So $\sqrt{a}$ is constructible.

\begin{center}
\begin{tikzpicture}

\draw[thick, purple] (0,0) arc(180:0:3);
\draw [thick](0,0)--(6,0);
\draw [thick](4,0)--(4,2.835);
\draw [thick](0,0)--(4,2.835);
\draw [thick](6,0)--(4,2.835);
\node[left] at (4,1.418){$\sqrt{a}$};
\node[below] at (2,0){$a$};
\node[below] at (5,0){$1$};
\end{tikzpicture}
\end{center}%it's obvious that $\mbf$ consists of constructible real numbers.

Assume that the statement holds when  $m<n$. 
%Suppose that when $m<n$, a finite extension $\mbf$ of $\mbq$ with $[\mbf:\mbq]=2^m$ consists of constructible real numbers.

Now suppose that $[\mbf:\mbf']=2^n$.  Take $\alpha$ in $\mbf\setminus\mbf'$. If $[\mbf'(\alpha):\mbf']=2^n$, then $\{1,\alpha,\cdots,\alpha^{2^n-1}\}$ is a basis of $\mbf$ over $\mbf'$. Since $\mbf'(\alpha):\mbf'(\alpha^2)]=2$, we have that  $[\mbf'(\alpha^2):\mbf']=2^{n-1}$. Anyway there exists $\beta$ in $\mbf$ such that $[\mbf'(\beta):\mbf']=2^k<2^n$. By assumption, $\mbf'(\beta)$ consists of constructible numbers. Moreover $[\mbf: \mbf'(\beta)]=2^{n-k}<2^n$. Hence $\mbf$ consists of constructible real numbers.

\end{proof}

\begin{remark}
Theorem~\ref{thm: StraightedgeCompass} shows that a real number $\alpha$ is constructible iff $[\mbq(\alpha):\mbq]=2^m$.
\end{remark}

So the field of constructible numbers contains many numbers other than rationales. But anyway it is a field consisting of algebraic numbers. Moreover by straightedge and compass it's impossible to construct any of the following:
\begin{enumerate}
  \item a square  whose area is $\pi$;%~\footnote{The number $\pi$ is transcendental. See Theorem~\ref{thm: pi}.};
  \item  a cube whose volume is 2;%~\footnote{The number $\sqrt[3]{2}$ is not constructible since $[\mbq(\sqrt[3]{2}):\mbq]=3$.};
  \item trisect $\theta=\frac{\pi}{3}$,%~\footnote{An angle $\theta$ can be trisected iff $\cos\frac{\theta}{3}$ is constructible. But the minimal polynomial of $\cos\frac{\pi}{9}$ is $8x^3-6x-1$, which means, $[\mbq(\cos\frac{\pi}{9}):\mbq]=3$.}
\end{enumerate}
since
\begin{enumerate}
  \item The number $\pi$ is \textbf{transcendental}~(not algebraic over $\mbq$).~\index{transcendental} ~\cite{Baker1990}.
  \item The number $\sqrt[3]{2}$ is not constructible since $[\mbq(\sqrt[3]{2}):\mbq]=3$.
  \item An angle $\theta$ can be trisected iff $\cos\frac{\theta}{3}$ is constructible. But the minimal polynomial of $\cos\frac{\pi}{9}$ is $8x^3-6x-1$~\footnote{Recall that $\cos3\theta=4\cos^3\theta-3\cos\theta$ and let $\cos3\theta=\frac{1}{2}$.}, which means, $[\mbq(\cos\frac{\pi}{9}):\mbq]=3$.
\end{enumerate}

The integer $F_k=2^{2^k}+1$ is called the $k$-th \textbf{Fermat prime}. Among Fermat numbers, so far only 5 primes: $F_0,\cdots,F_4$ are found  though it is conjectured that there are infinitely many.~\index{Fermat number}

\begin{theorem}
  A regular $n$-gon is constructible iff $n=2^mp_1\cdots p_k$ where $p_i's$ are Fermat primes.
\end{theorem}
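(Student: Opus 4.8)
The plan is to reduce the statement to two facts: a regular $n$-gon is constructible if and only if $\zeta_n = e^{2\pi i/n}$ is a constructible number, and then to combine Theorem~\ref{thm: StraightedgeCompass} (which says a real algebraic number $\alpha$ is constructible iff $[\mbq(\alpha):\mbq]$ is a power of $2$) with the fundamental theorem of Galois theory applied to the cyclotomic extension $\mbq(\zeta_n)/\mbq$. First I would observe that constructing a regular $n$-gon is equivalent to constructing the point $\zeta_n$ on the unit circle, and that $\zeta_n$ is constructible iff $\cos(2\pi/n)$ is, iff $\zeta_n$ lies in the field $\mbe$ of constructible numbers. Since $\mbq(\zeta_n)$ is generated over $\mbq$ by $\zeta_n$, and $\mbq(\cos(2\pi/n)) \subseteq \mbq(\zeta_n)$ with $[\mbq(\zeta_n):\mbq(\cos(2\pi/n))] \le 2$, the number $\zeta_n$ is constructible iff $[\mbq(\zeta_n):\mbq]$ is a power of $2$; this uses Theorem~\ref{thm: StraightedgeCompass} in both directions, via the tower-of-quadratic-extensions characterization, together with the fact that $\mbq(\zeta_n)/\mbq$ is Galois (it is the splitting field of the separable polynomial $x^n-1$) so that every intermediate degree is automatically realized through a chain of subgroups.

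Next I would invoke Theorem~\ref{thm:Phi}: $[\mbq(\zeta_n):\mbq] = \phi(n)$, where $\phi$ is Euler's phi function. So a regular $n$-gon is constructible iff $\phi(n)$ is a power of $2$. The key step here — and the one genuine subtlety, though not a hard one — is the following refinement: if $[\mbq(\zeta_n):\mbq] = \phi(n) = 2^m$, then $\zeta_n$ is genuinely constructible, i.e.\ we can produce an \emph{actual} tower of quadratic extensions from $\mbq$ up to $\mbq(\zeta_n)$. This is where the fundamental theorem of Galois theory does the real work: ${\rm Gal}(\mbq(\zeta_n)/\mbq) \cong \mbz_n^\times$ is abelian of order $2^m$, hence a $2$-group, and a finite $2$-group has a chain of subgroups $1 = H_0 \lhd H_1 \lhd \cdots \lhd H_m = G$ with each index $2$ (this follows from the fact that a nontrivial $p$-group has nontrivial center, applied inductively — I would cite this as a standard consequence of the Sylow material, or prove the one-line induction). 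By the Galois correspondence (Theorem~\ref{thm:FundamentalGalois}), this chain yields $\mbq = \mbq(\zeta_n)^{H_m} \subseteq \cdots \subseteq \mbq(\zeta_n)^{H_0} = \mbq(\zeta_n)$ with each step of degree $2$, and each quadratic step is of the form adjoining a square root, so every element — in particular $\cos(2\pi/n)$ — is constructible by the $m=1$ base case of Theorem~\ref{thm: StraightedgeCompass}.

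Finally I would translate the condition $\phi(n) = 2^m$ into the arithmetic statement in the theorem. Write $n = 2^a p_1^{e_1}\cdots p_k^{e_k}$ with the $p_i$ distinct odd primes. Then $\phi(n) = 2^{a-1}(\text{if }a\ge 1)\prod p_i^{e_i-1}(p_i-1)$, and this is a power of $2$ iff every $e_i = 1$ and every $p_i - 1$ is a power of $2$. An odd prime $p$ with $p-1 = 2^t$ forces $t$ itself to be a power of $2$ (if $t$ had an odd factor $d>1$, then $2^{t/d}+1$ would properly divide $2^t+1 = p$), i.e.\ $p$ is a Fermat prime $F_j = 2^{2^j}+1$. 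Conversely any such $n$ has $\phi(n)$ a power of $2$. Combining with the reduction above gives exactly: a regular $n$-gon is constructible iff $n = 2^m p_1\cdots p_k$ with the $p_i$ distinct Fermat primes. The main obstacle, such as it is, is not any single deep step but the careful assembly of the "degree $2^m$ $\Rightarrow$ actually constructible" direction, which genuinely needs both the $2$-group subgroup chain and the Galois correspondence, rather than merely a degree count; everything else is bookkeeping with $\phi$ and with results already established in the text.
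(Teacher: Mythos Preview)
Your proposal is correct and follows the same overall arc as the paper's proof: reduce constructibility of the regular $n$-gon to constructibility of $\zeta_n$, invoke $[\mbq(\zeta_n):\mbq]=\phi(n)$, and then characterize when $\phi(n)$ is a power of $2$. The paper's proof is three lines long and simply cites Theorem~\ref{thm: StraightedgeCompass} for the equivalence ``$\zeta_n$ constructible $\Leftrightarrow$ $\phi(n)=2^l$'' and asserts the arithmetic translation without argument.

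The one place you genuinely diverge is in the direction $\phi(n)=2^m \Rightarrow \zeta_n$ constructible. The paper treats this as an immediate consequence of Theorem~\ref{thm: StraightedgeCompass}, whereas you build the quadratic tower explicitly via the Galois correspondence applied to the abelian $2$-group ${\rm Gal}(\mbq(\zeta_n)/\mbq)$. Your caution here is well-placed: the general claim ``$[\mbq(\alpha):\mbq]=2^m$ implies $\alpha$ constructible'' is famously false without a normality hypothesis (quartic fields with Galois closure $S_4$ give counterexamples), so the fact that $\mbq(\zeta_n)/\mbq$ is \emph{abelian} is exactly what makes the argument go through, and you make that dependence visible while the paper does not. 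You also handle the real-versus-complex issue (passing through $\cos(2\pi/n)$) and supply the standard argument that a prime $p=2^t+1$ forces $t$ to be a power of $2$, both of which the paper omits. So: same skeleton, but your version is the rigorous one.
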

\begin{proof}
 The regular $n$-gon is constructible iff the primitive root $\zeta_n$ of $x^n-1$ is constructible. By Theorem~\ref{thm: StraightedgeCompass}, the number $\zeta_n$ is constructible iff $\varphi(n)=[\mbq(\zeta_n):\mbq]=2^l$ for some nonnegative integer $l$. And $\varphi(n)=2^l$ iff  $n=2^mp_1\cdots p_k$ for $p_i's$ being Fermat primes.
\end{proof}

\section*{Exercises}
\begin{exercise}
Prove that $\txau(\mbk/\mbf)$ is a finite group when $\mbk/\mbf$ is a finite field extension.
\end{exercise}

\begin{exercise}
  Let $\mbk$ be a Galois extension of $\mbf$. Define $${\rm Tr}_{\mbk/\mbf}(\alpha)=\sum_{\sigma\in {\rm Gal}(\mbk/\mbf)}\sigma(\alpha)$$ for every $\alpha$ in $\mbk$.

\begin{enumerate}
  \item Prove that ${\rm Tr}_{\mbk/\mbf}(\alpha)$ is in $\mbf$ for all $\alpha$ in $\mbk$.
  \item Prove that ${\rm Tr}_{\mbk/\mbf}(\alpha+\beta)={\rm Tr}_{\mbk/\mbf}(\alpha)+{\rm Tr}_{\mbk/\mbf}(\beta)$.
   \item Let $\mbk=\mbf(\sqrt{D})$ be a quadratic extension of $\mbf$. Show that  ${\rm Tr}_{\mbk/\mbf}(a+b\sqrt{D})=2a$ for all $a,b\in \mbf$.
   \item Let $m_{\alpha,\mbf}(x)=x^d+a_{d-1}x^{d-1}+\cdots+a_1x+a_0$ in $\pf$ be the minimal polynomial of $\alpha$. Prove that ${\rm Tr}_{\mbk/\mbf}(\alpha)=-\frac{n}{d}a_{d-1}$.
\end{enumerate}
\end{exercise}

\begin{exercise}
  Suppose $\mbk$ be a Galois extension of $\mbf$. Define
  $${\rm N}_{\mbk/\mbf}(\alpha)=\prod_{\sigma\in{\rm Gal}(\mbk/\mbf)}\sigma(\alpha).$$
  \begin{enumerate}
    \item Prove that ${\rm N}_{\mbk/\mbf}(\alpha)$ is in $\mbf$.
    \item  Prove that ${\rm N}_{\mbk/\mbf}(\alpha\beta)={\rm N}_{\mbk/\mbf}(\alpha){\rm N}_{\mbk/\mbf}(\beta)$.
    \item  Let $\mbk=\mbf(\sqrt{D})$ be a quadratic extension of $\mbf$. Show that ${\rm N}_{\mbk/\mbf}(a+b\sqrt{D})=a^2-Db^2$.
    \item Let $m_{\alpha,\mbf}(x)=x^d+a_{d-1}x^{d-1}+\cdots+a_1x+a_0$ in $\pf$ be the minimal polynomial of $\alpha$. Prove that ${\rm N}_{\mbk/\mbf}(\alpha)=(-1)^da_0$.
    \end{enumerate}
\end{exercise}

\begin{exercise}
  Suppose $\mbk$ be a Galois extension of $\mbf$ and $\sigma$ is in ${\rm Gal}(\mbk/\mbf)$.
  \begin{enumerate}
    \item Suppose $\alpha=\frac{\beta}{\sigma(\beta)}$ for some nonzero $\beta$ in $\mbk$. Prove that ${\rm N}_{\mbk/\mbf}(\alpha)=1$.
    \item Suppose $\alpha=\beta-\sigma(\beta)$ for some $\beta$ in $\mbk$. Prove that ${\rm Tr}_{\mbk/\mbf}(\alpha)=0$.
  \end{enumerate}
\end{exercise}

\begin{exercise}
let $\mbk$ be a Galois extension of $\mbf$ such that ${\rm Gal}(\mbk/\mbf)$ is cyclic of order n generated by $\sigma$. Suppose $\alpha$ in $\mbk$ satisfies that ${\rm N}_{\mbk/\mbf}(\alpha)=1$. Prove that $\alpha=\frac{\beta}{\sigma(\beta)}$ for some nonzero $\beta$ in $\mbk$.
\end{exercise}

\begin{exercise}
let $\mbk$ is a Galois extension of $\mbe$  and $\mbe$ is a Galois extension of $\mbf$. Whether or not  $\mbk$ is a Galois extension of $\mbf$?%and such that ${\rm Gal}(\mbk/\mbf)$ is cyclic of order n generated by $\sigma$. Suppose $\alpha$ in $\mbk$ satisfies that ${\rm N}_{\mbk/\mbf}(\alpha)=1$. Prove that $\alpha=\frac{\beta}{\sigma(\beta)}$ for some nonzero $\beta$ in $\mbk$.
\end{exercise}

 \begin{exercise}
   Suppose $\mbk/\mbf$ is a Galois extension and $\mbe$ is an intermediate field between $\mbf$ and $\mbk$. Prove that $\mbe/\mbf$ is a Galois extension iff $\sigma(\mbe)=\mbe$ for all $\sigma$ in ${\rm Gal}(\mbk/\mbf)$.
 \end{exercise}

\begin{exercise}
  Let $\zeta$ be a $p^{th}$ root of unity for a prime $p$. Prove that ${\rm Tr}_{\mbq(\zeta_p)/\mbq}(\zeta)$ is $-1$ or $p-1$ depending on whether or not $\zeta$ is a primitive $p^{th}$ root of unity.
\end{exercise}

\begin{exercise}
  Let $p,q,r$ be primes and $q\neq r$. Let $\zeta$ be any root of $x^p-q$ and $\eta$ be any root of $x^p-r$. Prove that $\mbq(\zeta)\neq\mbq(\eta)$.
\end{exercise}

\begin{exercise}
  Compute ${\rm Aut}(\mbq(\sqrt{3+\sqrt{3}})/\mbq)$ and decide whether it is a Galois extension.
\end{exercise}

\begin{exercise}
Compute the Galois group of $x^4+2$, and find all subgroups and corresponding fixed fields.
\end{exercise}

\begin{exercise}
Determine the Galois group of $x^4-14x^2+9$, and find all subgroups and corresponding fixed fields.
\end{exercise}

\begin{exercise}
  Describe  the Galois group of $f(x)=x^4+x+1$ in $\mbf_2[x]$ over $\mbf_2$. Find all intermediate fields of the splitting field of $f$ over $\mbf_2$.
\end{exercise}

\begin{exercise}
  Find a radical tower for the extension $\mbq(\sqrt[6]{1+\sqrt[4]{3}},\sqrt[4]{2})/\mbq$.
\end{exercise}

\begin{exercise}
Prove that $x^5-4x+2$ is unsolvable.
\end{exercise}

\begin{exercise}
Prove that $\cos\theta$ is constructible iff $\sin\theta$ is constructible.
\end{exercise}

\begin{exercise}
Prove that a regular n-gon is constructible iff $\cos\frac{2\pi}{n}$ is constructible.
\end{exercise}

\begin{exercise}
Determine whether or not the following regular $n$-gon's are constructible:
\begin{enumerate}
  \item $n=9$.
  \item $n=5$.
  \item $n=7$.
 \end{enumerate}

\end{exercise}

%\appendix
%\chapter{Transcendency of $\pi$}
%\begin{theorem}
%\label{thm: pi}
  %$\pi$ is transcendental.
%\end{theorem}
%\include{}

%\index{Galois extension}
%\appendixpage

\backmatter
\printindex

%    Bibliography styles amsalpha or harvard are also acceptable.
%\bibliographystyle{amsplain}
\bibliography{}

%    See note above about multiple indexes.
\end{document}